\newcommand{\red}{\color{black}}
\newcommand{\black}{\color{black}}
\newcommand{\beq}{\begin{equation}}
\newcommand{\eeq}{\end{equation}}
\newcommand{\e}{\mathrm{e}}
\newcommand{\dd}{\,\mathrm{d}}
\newcommand{\taul}{\tau^{-1/2}}
\newcommand{\epsi}{\kappa}
\newcommand{\unum}[1]{u^{#1}_\tau}
\newcommand{\un}{\unum{n}}
\newcommand{\unumq}[1]{\overline{u}^{#1}_\tau}
\newcommand{\unq}{\unumq{n}}
\newcommand{\VK}[1]{V^K_{\epsi_{#1}}(s,t_n)}
\newcommand{\WK}[1]{W^K_{\epsi_{#1}}(s,t_n)}
\newcommand{\VqK}[1]{\overline{V}^K_{\epsi_{#1}}(s,t_n)}
\newcommand{\WqK}[1]{\overline{W}^K_{\epsi_{#1}}(s,t_n)}
\newcommand{\PhiKtau}{\Phi^K_\tau}
\newcommand{\Tkappa}{\mathcal T_\kappa}
\newtheorem{theorem}{Theorem}[section]
\newtheorem{lemma}[theorem]{Lemma}
\newtheorem{cor}[theorem]{Corollary}
\newtheorem{proposition}[theorem]{Proposition}
\newtheorem{rem}[theorem]{Remark}
\author{Alexander Ostermann}
\address{Department of Mathematics, University of Innsbruck, Technikerstr.~13, 6020 Innsbruck, Austria (A. Ostermann)}
\email{alexander.ostermann@uibk.ac.at}
\author{Fr\'ed\'eric Rousset}
\address{Laboratoire de Math\'ematiques d'Orsay (UMR 8628), Universit\'e Paris-Sud,  91405 Orsay Cedex, France (F. Rousset)}
\email{frederic.rousset@math.u-psud.fr}
\author{Katharina Schratz}
\address{LJLL (UMR 7598), Sorbonne Universit\'e, UPMC, 4 place Jussieu, 75005, Paris, France (K. Schratz)}
\email{katharina.schratz@ljll.math.upmc.fr}
\begin{document}
\begin{abstract}
In this paper, we propose a new scheme for the integration of the periodic nonlinear Schr\"{o}dinger equation and rigorously prove convergence rates at low regularity. The new integrator has decisive advantages over standard schemes at low regularity. In particular, it is able to handle initial data in $H^s$ for {$0 < s\le 1$}. The key feature of the integrator is its ability to distinguish between low and medium frequencies in the solution and to treat them differently in the discretization. This new approach requires a well-balanced filtering procedure which is carried out in Fourier space. The convergence analysis of the proposed scheme is based on discrete (in time) Bourgain space estimates which we introduce in this paper. A numerical experiment illustrates the superiority of the new integrator over standard schemes for rough initial data.
\end{abstract}

\title[]{Fourier integrator for periodic NLS: low regularity estimates via discrete Bourgain spaces}

\maketitle

\section{Introduction}

We consider the cubic periodic Schr\"{o}dinger equation (NLS)
\begin{equation}\label{nlsO}
\begin{aligned}
i \partial_t u &= - \partial_{x}^2 u + \vert u \vert^2 u, \qquad (t,x) \in \mathbb{R} \times \mathbb{T}
\end{aligned}
\end{equation}
which, together with its full space counterpart, has been extensively studied in the literature. In the last decades, Strichartz estimates and Bourgain spaces allowed various authors to establish well-posedness results for dispersive equations in low regularity spaces (see \cite{Bour93a,BGT04,Strichartz,Tao06}). The numerical theory of dispersive PDEs, on the other hand, is still restricted to smooth solutions, in general. In the case of the nonlinear Schr\"{o}dinger equation \eqref{nlsO} this stems from the following two reasons:
\begin{enumerate}
\item[(A)]\label{item:loc}
Standard time stepping techniques, e.g., splitting methods \cite{Lubich08} or exponential integrators \cite{HochOst10}, are based on freezing the free Schr\"{o}dinger flow $S(t) = \e^{i t \partial_x^2}$ during a step of size~$\tau$. Such freezing techniques, related to Taylor series expansion of the linear flow, however, produce derivatives in the local error terms restricting the approximation property to smooth solutions. More precisely, for first-order methods, the expansion of the free flow $S(t+\xi) = S(t)+ \mathcal{O}(\tau \partial_x^2)$, $0<\xi\le\tau$ requires the boundedness of (at least) two additional derivatives, while higher order approximations increase the regularity requirements by two more derivatives for each additional order.
\item[(B)]\label{item:stab}
Standard stability arguments in addition require smooth Sobolev spaces. Indeed, they rely on classical product estimates
$$
\|fg\|_{H^s} \leq C \|f \|_{H^s } \|g\|_{H^s}, \quad s>1/2
$$
to handle the nonlinear terms in the error analysis. 
This restricts the global error analysis to smooth Sobolev spaces $H^s$ with $s>1/2$ leaving out the important class of $L^2$ spaces.
\end{enumerate}
The standard local error structure introduced by the Schr\"{o}dinger operator, i.e., the loss of two derivatives, together with a standard stability argument thus restricts global first-order convergence to $H^{2+1/2+\varepsilon}$ solutions (for any $\varepsilon>0$). Using a refined global error analysis, by first proving fractional convergence of the scheme in a suitable higher order Sobolev space (which implies a priori the boundedness of the numerical solution in this space \cite{Lubich08}), allows one to obtain stability in $L^2$ for $H^{1/2+\varepsilon}$ solutions. However, due to the standard local error structure $\mathcal{O}(\tau^2 \partial_x^2)$, the first-order convergence rate is nevertheless only retained for $H^2$ solutions. The latter is not only a technical formality. The order reduction in the case of non-smooth solutions is also observed numerically (see, e.g., the examples in \cite{JL00,OS18} and Fig.~\ref{fig} in section~\ref{sec:numerical-ex} below). Only very little is known on how to overcome this problem.

Recently, the first obstacle~(A) could be overcome partly by developing specifically tailored schemes which optimise the structure of the local error approximation. This has been achieved by employing Fourier based techniques that are able to discretize the central oscillations in an efficient and correct way (see \cite{HS16,OS18,OstS19,SWZ20}). The second obstacle~(B), on the other hand, is much harder to circumvent. The control of nonlinear terms in PDEs is an ongoing challenge in (computational) mathematics at large, and unlike in the parabolic setting no pointwise smoothing can be expected for dispersive PDEs. On the continuous level, however, important space time estimates featuring a gain in integrability can be used to extend well-posedness results to lower regularity spaces $H^s$ with  $s<1/2$. On the full space, the Strichartz estimates
\begin{align}\label{str}
\left \Vert \e^{i t \partial_x^2 }u_0 \right\Vert_{L_t^q L_x^r} \leq c_{q,r} \Vert u_0\Vert_2, \qquad  \text{for}\quad
 2 \leq q, r \leq \infty, \quad { 2 \over q } + { 1 \over r}= {1 \over 2}
\end{align}
can be used.  In the periodic setting, though waves do not disperse, one can gain integrability by using Bourgain spaces (we shall give the definition of these spaces  in section \ref{sec:cp}). For \eqref{nlsO} on the torus, the crucial estimate used in the analysis is
\begin{align}\label{B}
\| u \|_{L^4( \mathbb{R} \times \mathbb{T})} \leq C \|u \|_{X^{0, {3 \over 8}}}
\end{align}
which allows for global well-posedness for initial data in $L^2$. We refer for example to \cite{Bour93a,BGT04, Strichartz,Tao06}.

The natural question therefore arises: In how far can we inherit this subtle smoothing property on a discrete level? The critical issue thereby is twofold: the estimates \eqref{str} and~\eqref{B} are not pointwise in time and, moreover, their gain lies in integrability and not regularity. Discrete versions of these estimates are therefore delicate to reproduce. At the same time they are essential to establish numerical stability in the same space where we have stability of the PDE. While discrete Strichartz-type estimates were  successfully employed on the full space $\mathbb{R}^d$ (see, e.g., \cite{IZ09,Ignat11,ORS19}) a global low regularity analysis on bounded domains $\Omega \subset \mathbb{R}^d$ remains an open problem. The step from the full space to the bounded setting is -- as in the continuous setting -- nontrivial due to the loss of dispersion. Strichartz estimates are weaker on bounded domains as the solution can not ``disperse'' to infinity in space. Nevertheless bounded domains are computationally very interesting as spatial discretizations of nonlinear PDEs are in general subjected to truncated domains. 

In this work we introduce discrete Bourgain spaces for the periodic Schr\"{o}dinger equation \eqref{nlsO}. This will allow us to break standard stability restrictions on the torus. In particular, we establish a discrete version of \eqref{B} permitting $L^2$ error estimates in $H^s$ also for $s\leq 1/2$. For the discretization of~\eqref{nlsO} we propose a new  twice-filtered Fourier based technique that correctly discretizes the central oscillations of the problem. The novel discretisation approach can be applied to a larger class of dispersive equations.  For simplicity we restrict our attention to the cubic Schr\"{o}dinger equation. The stability analysis (based on discrete Bourgain space estimates) we develop can be extended to various numerical schemes, e.g., splitting methods. The benefit of the here introduced twice-filtered Fourier based approach it that it optimises the local error structure which allows for convergence under lower regularity than standard discretizations.
The precise form of the scheme is given in~\eqref{scheme} below. In particular, it involves a frequency localization
 through a filter $\Pi_{K}$ (see \eqref{PiKdef}) which projects on frequencies $|k| \leq K$ which will allow us to optimize the total (time and frequency) discretization error.   Indeed, \black with the help of discrete Bourgain type estimates we can prove the following global error estimate  for the new scheme.
{
\begin{theorem}
\label{maintheo}
For every $T >0$ and $u_{0} \in H^{s_{0}}$,  $0 \leq s_{0} \leq 1$, let us denote by $u \in \mathcal{C}([0, T], H^{s_{0}})$ the exact solution of \eqref{nlsO} with initial datum $u_{0}$ and by $\un$ the sequence defined by the scheme~\eqref{scheme} below. Then, 
 we have  the following error estimates:
\begin{itemize}
\item[(i)] For $K= \tau^{- {1 \over 2}}$ and $0< s_0\le \frac14$, 
there exists $\tau_{0}>0$ and $C_{T}>0$ such that for every step size $\tau \in (0, \tau_{0}]$
\beq
\label{final1} \|\un- u(t_{n})\|_{L^2} \leq C_{T} \tau^{s_{0}\over 2}, \quad 0 \leq n\tau \leq T;
\eeq 
\item[(ii)] For  $\frac14 < s_0 \le \frac12$, and any $\varepsilon>0$ such that
 $ {1\over 4} + \varepsilon <s_{0}$,  with the choice $ K= \tau^{-\frac{ s_{0} + \frac1{8} - {\varepsilon \over 2} }{s_{0}+ \frac12}}$, 
 there exists $\tau_{0}>0$ and $C_{T}>0$ such that for every step size $\tau \in (0, \tau_{0}]$, we have
\beq
\label{final2}
\|\un- u(t_{n})\|_{L^2} \leq C_{T} \tau^{  s_{0}\left(  1 -  { 1 \over 2 s_{0}+ 1} ( \frac{3}{4}+ \varepsilon)\right)}, \quad 0 \leq n\tau\le T;
\eeq
\item[(iii)] For $\frac12 <s_0 \leq 1$,  and $\varepsilon
 \in (0, {1 \over 4})$, 
  with the choice $ K=  \tau^{ -1 + { 1 \over s_{0}} ( { 1 \over 8} + {\varepsilon \over 2})   }$, 
  there exists $\tau_{0}>0$ and $C_{T}>0$ such that for every step size $\tau \in (0, \tau_{0}]$, we have   
\beq
\label{final3}  \|\un- u(t_{n})\|_{L^2} \leq C_{T}   \tau^{ s_{0}-  ( { 1 \over 8} +{\varepsilon \over 2})}, \quad 0 \leq n\tau\le T.
\eeq
\end{itemize}
\end{theorem}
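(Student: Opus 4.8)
The plan is to decompose the global error into a \emph{frequency-truncation} part and a genuinely \emph{time-discrete} part, and then to choose the cutoff $K$ so that the two balance. Since $u_{0}\in H^{s_{0}}$ and the scheme \eqref{scheme} projects onto frequencies $|k|\le K$ through $\Pi_{K}$, the discarded part $(1-\Pi_{K})u$ contributes an error of size $\mathcal{O}(K^{-s_{0}})$ in $L^{2}$; a direct computation confirms that in each of the three regimes the stated rate is \emph{exactly} $K^{-s_{0}}$ for the prescribed $K$ (e.g.\ $K=\tau^{-1/2}$ gives $K^{-s_{0}}=\tau^{s_{0}/2}$ in case (i), and the exponents in \eqref{final2}--\eqref{final3} simplify to $K^{-s_{0}}$ as well). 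The entire difficulty therefore reduces to showing that the time-discrete error of the filtered scheme can be kept at or below $K^{-s_{0}}$ for the indicated $K$.

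To control the time-discrete error I would work not in $L^{2}$ but in a discrete-in-time Bourgain space $X^{0,b}_{\tau}$ with $b$ slightly above $3/8$; this threshold is the source of the $\varepsilon$-loss appearing in cases (ii) and (iii). The first structural ingredient is a discrete analogue of the estimate \eqref{B}, namely a bound of the form $\|v\|_{\ell^{4}_{\tau}L^{4}}\lesssim\|v\|_{X^{0,b}_{\tau}}$ for grid functions, together with the companion trilinear and duality estimates that allow one to bound cubic expressions $\Pi_{K}(v\,\overline{w}\,z)$ in a negative-index discrete Bourgain norm by the $X^{0,b}_{\tau}$ norms of the three factors. These discrete estimates replace the classical product rule $\|fg\|_{H^{s}}\le C\|f\|_{H^{s}}\|g\|_{H^{s}}$ valid only for $s>1/2$, and are precisely what permit stability down to $s_{0}=0$.

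With this stability framework in hand, the second ingredient is a local error analysis of the twice-filtered scheme. I would Taylor-expand the exact Duhamel formula against one step of the scheme and isolate the leading defect. Because the scheme is \emph{designed} to integrate the dominant oscillation $\e^{it\partial_{x}^{2}}$ faithfully on the resolved frequencies, the residual defect no longer carries the full $\partial_{x}^{2}$ loss of standard integrators; any derivatives that survive act only on frequencies $|k|\le K$ and so cost at most a power of $K$. This should yield a one-step defect of size $\tau^{1+\theta}K^{\rho}$ in the appropriate negative-order discrete Bourgain norm, with $\theta,\rho$ depending on $s_{0}$ and on $b$. Summing over the $\mathcal{O}(T/\tau)$ steps and invoking the discrete Bourgain stability estimates through a discrete Gronwall/bootstrap argument then converts this into a global time-error bound of the form $\tau^{\theta}K^{\rho}$.

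The proof concludes by \emph{optimising}: minimising $K^{-s_{0}}+\tau^{\theta}K^{\rho}$ over $K$ amounts to balancing the two terms, which fixes $K=\tau^{-\theta/(s_{0}+\rho)}$ and reproduces the three cutoffs and the three exponents of \eqref{final1}--\eqref{final3}; the breakpoints at $s_{0}=1/4$ and $s_{0}=1/2$ record which pair $(\theta,\rho)$, i.e.\ which local-error mechanism, is active. The main obstacle is the first ingredient: establishing the discrete-in-time Bourgain estimates, and especially the discrete $L^{4}$ bound, with constants uniform in $\tau$. Unlike on the full space there is no dispersive spreading to exploit, so the gain must be extracted from the arithmetic of the resonance relation $k=k_{1}-k_{2}+k_{3}$ with phase $\Phi=k^{2}-k_{1}^{2}+k_{2}^{2}-k_{3}^{2}$, transported to the time grid; controlling the aliasing introduced by the discrete time variable while retaining the gain in $b$ that powers the whole argument is the delicate point.
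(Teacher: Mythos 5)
Your overall architecture coincides with the paper's: split the error into the PDE-level truncation error $\|u-u^K\|$ and the time-discretization error of the filtered scheme, control the latter in discrete Bourgain spaces via a discrete $L^4$/trilinear estimate, and balance the two contributions by optimizing $K$; your observation that the stated rates all equal $K^{-s_0}$ for the prescribed cutoffs is correct and is exactly how the paper chooses $K$. There are, however, two genuine gaps. First, your key stability ingredient is stated as a \emph{lossless} discrete $L^4$ bound $\|v\|_{l^4_\tau L^4}\lesssim \|v\|_{X^{0,b}_\tau}$. This is false for $K>\tau^{-1/2}$: the correct estimate (Lemma \ref{prodd}) is $\|\Pi_K u_n\|_{l^4_\tau L^4}\lesssim (K\tau^{1/2})^{1/2}\|u_n\|_{X^{0,3/8}_\tau}$, and the loss factor $(K\tau^{1/2})^{1/2}$, which equals $1$ only in case (i), is the central difficulty of the whole analysis. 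Managing it is what forces the refined product estimates \eqref{prodd3ter}--\eqref{prodd3bisbis2}, in which one or two factors are absorbed through the Sobolev embeddings $H^{1/4+}\subset L^4$ and $H^{1/2+}\subset L^\infty$; this is where the breakpoints $s_0=1/4$ and $s_0=1/2$ and the $\varepsilon$-loss actually originate, not from taking $b$ slightly above $3/8$ as you suggest. If your lossless $L^4$ bound were true, the optimization would yield strictly better rates than the theorem asserts, which should have been a warning sign. Relatedly, measuring the error in $X^{0,b}_\tau$ with $b$ slightly above $3/8$ cannot yield the pointwise-in-time $L^2$ bound of the theorem: the embedding \eqref{sobbourg} into $l^\infty_\tau L^2$ requires $b>1/2$, and the paper accordingly runs the fixed-point/bootstrap in $X^{0,b}_\tau$ with $b\in(1/2,5/8)$, reserving the indices $\pm 3/8$ for the product estimates.

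Second, your account of the local error misplaces the role of the two filters. You write that the surviving derivatives ``act only on frequencies $|k|\le K$ and so cost at most a power of $K$''; if that were the mechanism, the one-step defect would be of size $\tau^2K^2=\tau^{2-\alpha}$, and the rates of cases (ii)--(iii), where $\alpha$ is close to $2$, would be unreachable. The point of the \emph{second} filter $\Pi_{\taul}$ in \eqref{nls} and \eqref{scheme} is that in the remainders \eqref{R1}--\eqref{R2} every surviving derivative falls on a factor supported in $|k|\le\tau^{-1/2}$, so that $\tau^{1/2}\partial_x\Pi_{\taul}$ is bounded and can be traded for a positive power of $\tau$ times $s_0$ derivatives. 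Your sketch never engages with this twice-filtered structure, and without it the local-error exponents you would obtain do not reproduce \eqref{final2}--\eqref{final3}.
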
 }

In case (i), the error estimate we obtain for our new Fourier based discretization is not better than the one we would expect for standard schemes (based on classical Taylor series expansion techniques). The interesting feature, however, is that the analysis we develop is able to provide an error estimate even for data at this low level of regularity.  So far error estimates (even with arbitrary low order of convergence) were restricted to solutions (at least) in  \red $H^{s}$, $s>1/2$\black. We note that our analysis can be employed for a large class of schemes, e.g., splitting methods or exponential integrators. 

In the cases (ii) and (iii), we observe that we get a better estimate than $\tau^{s_{0}\over 2}$ which is the one we would expect for standard numerical schemes with a loss of two derivatives in the local error (cf. (A)). Observe, for example, that for $s_{0}=1$, we get an error estimate of order $\tau^{7 \over 8}$ which is much better than the standard $\tau^{1 \over 2}.$   Note that it is even (slightly) better than the convergence order $\tau^{ 5 \over 6}$ which we obtained with the help of discrete Strichartz estimates on the full space in \cite{ORS19} though dispersive effects are much weaker in the periodic case. This comes from our improved Fourier based discretization with the use of the two different filters $\Pi_{\tau^{-1/2}}$ and $\Pi_{K}$. The favourable error behaviour is numerically underlined in Fig.~\ref{fig} (see section~\ref{sec:numerical-ex} below).

\red
It seems also possible to extend the analysis developped in this paper to higher dimensions and more general nonlinearities.
A large part of the framework that we introduce can be readily extended, 
the central task would lie in establishing  
 the corresponding discrete counter part of the continuous Bourgain estimates given in  \cite{Bour93a}
in the various cases as done here  in Lemma \ref{prodd}. 

\black

The main idea in our discretization is the following. Instead of attacking directly ~\eqref{nlsO}, we discretize the projected equation
\begin{multline}\label{nls}
i \partial_t u^K = - \partial_{x}^2 u^K
+ 2 \Pi_K \left( \Pi_{\taul} u^K \,\Pi_{\taul} \overline u^K\, \Pi_{K^+} u^K \right)
+ \Pi_K \left( \Pi_{\taul} u^K \,\Pi_{\taul} u^K\, \Pi_{K}  \overline  u^K \right),
\end{multline}
where the projection operator $\Pi_L$ for $L>0$ is defined by the Fourier multiplier
\beq
\label{PiKdef}
\Pi_L =  \chi^2\left({ -i \partial_x \over L}\right) = \overline\Pi_L,
\eeq
and  where $\Pi_{K^+}$ projects on the intermediate frequencies $ \taul \le\vert  k\vert \leq K$, i.e.,
\begin{equation}\label{Kp}
\Pi_{K^+} = \Pi_K -\Pi_{\taul}.
\end{equation}
Here $\chi$ is a smooth nonnegative  even  function which is one on $[-1,1]$ and supported in $[-2,2]$. The number $K\geq 1$ is considered as a parameter that will later depend on the step size $\tau$.  Note that the projection operator $\Pi_K$  in Fourier space reads
$$
\widehat{\Pi_K\phi}_\ell = \widehat{\phi}_\ell\, \chi^2\left(\frac{\ell}{K}\right), \quad \ell \in \mathbb{Z}.
$$
 \red  Splitting methods with  numerical filters have been successfully introduced in  \cite{Bao} for nonlinear Schr\"odinger equations  in the semiclassical regime  with attractive interaction to suppress numerically the modulation instability. \black \red Here, the relation between $K$ and $\tau$ can be seen as a CFL-type condition linking  the time discretisation parameter to the highest frequency in the system. We optimise this relation in such a way that the optimal rate of convergence is achieved for a given regularity; see Theorem \ref{maintheo}. \black 
 
The reason why we base our discretization on \eqref{nls} is twofold. First, we consider
\beq
\label{eq:frectrunc}
i \partial_{t} v^K = - \partial_x^2  v^K + \Pi_{K}(|\Pi_{K}v^K|^2 \Pi_{K}v^K), \quad  v^K(0)= \Pi_{K} u_{0}
\eeq
as an intermediate problem the single-filtered equation where all high frequencies are truncated. The difference between solutions of \eqref{nlsO} and \eqref{eq:frectrunc} is estimated in Corollary~\ref{corNLSK} and easy to control. Second, we refine the truncated model \eqref{eq:frectrunc} by considering a second projection $\Pi_{\taul}$ to low frequencies. Roughly speaking, each function with frequencies below $K$ is then decomposed into two parts: low frequencies for which $|k|\le \taul$ and the remaining intermediate frequencies. Since the original problem is cubic, these two projections lead to six terms in total. For our discretization, we only consider those terms in which two of the factors are of low frequencies. This motivates us to consider the twice-filtered equation \eqref{nls} as an approximation to equation \eqref{nlsO}.

The discretization of the twice-filtered Schr\"{o}dinger equation \eqref{nls} is carried out in a way such that the terms with intermediate frequencies $ \taul < \vert k \vert \leq K$ are treated  {\em exactly} while the lower order terms with frequencies $|k| \leq \taul$ are approximated in a suitable manner. This approach allows for a {\em low regularity approximation} of solutions of \eqref{nlsO}. Motivated by our previous work \cite{KOS19}, we thus propose the following numerical scheme:

\begin{equation}\label{scheme}
\begin{aligned}
\unum{n+1}  & = : \PhiKtau(\un) \\
& = \e^{i \tau \partial_x^2} \un - 2i \Pi_K \e^{i \tau \partial_x^2}
\mathcal{J}_1^\tau\left( \Pi_{\taul} \unq, \Pi_{K^+} \un, \Pi_{\taul} \un\right)\\
& \qquad\qquad\ \,
- i  \Pi_K \e^{i \tau \partial_x^2}\mathcal{J}_2^\tau\left(\Pi_{K} \unq,\Pi_{\taul} \un, \Pi_{\taul} \un\right),\\
\unum{0} & = \Pi_K  u(0),
\end{aligned}
\end{equation}
where
\begin{align}
\mathcal{J}^\tau_1(v_1,v_2,v_3) &
=  \frac{i}{2} \e^{-i \tau\partial_x^2}\left[ \left(\e^{i \tau \partial_x^2} \partial_x^{-1 }v_2 \right)\e^{i \tau \partial_x^2} \partial_x^{-1} \red(\black v_1v_3 \red)\black\right] \nonumber \\
&\qquad - \frac{i}{2} \left( \partial_x^{-1} v_2\right) \partial_x^{-1} \red(\black  v_1v_3  \red)\black + \tau \widehat{(v_2)}_0 v_1v_3 +\tau (v_2-\widehat{(v_2)}_0) \widehat{(v_1v_3)}_0, \label{J1}\\
\mathcal{J}^\tau_2(v_1,v_2,v_3) &
= \frac{i}{2} \e^{-i \tau \partial_x^2}\partial_x^{-1} \Big[\left(\e^{-i\tau \partial_x^2} \partial_x^{-1} v_1 \right)\left(\e^{i \tau \partial_x^2} v_2v_3\right)\Big] \nonumber\\
&\qquad - \frac{i}{2} \partial_x^{-1} \left(v_2v_3 \partial_x^{-1} v_1\right) + \tau \widehat{\left(v_1 v_2v_3\right)}_0 + \tau \widehat{(v_1)}_0 \big(v_2v_3 -  \widehat{(v_2v_3)}_0 \big)\label{J2}.
\end{align}
Here, we define for any function $f \in L^2(\mathbb{T})$ the operator $\partial_x^{-1}$ by $\partial_x^{-1} f(x) = \sum_{\substack{k \neq 0}} {(i k)^{-1}}{\hat{f}_k } \e^{i k x}$. Note that $\un$ in \eqref{scheme} is considered as an approximation to the exact solution of the nonlinear Schr\"{o}dinger equation \eqref{nlsO} at time $t_n=n\tau$.

\subsection*{Outline of the paper}
The paper is organized as follows. In section \ref{sec:cp}, we recall the main steps of the analysis of the Cauchy problem for \eqref{nlsO} and we use them to estimate the difference between the exact solution of~\eqref{nlsO} and the solution of the projected equation \eqref{nls}.  In particular, we prove that
\beq\label{spacediscintro}
\sup_{[0, T]} \| u - u^K \|_{L^2} \leq C_{T}\left( { 1 \over K^{s_{0}}} + \tau^{s_{0}}\right),
\eeq
see Corollary~\ref{corNLSK} and Proposition~\ref{propNLSK2}.

In section \ref{sec:discbourg}, we introduce a notion of  discrete Bourgain spaces for sequences
   $(u_{n})_{n} \in (L^2(\mathbb{T}))^\mathbb{N}$ and prove their main properties.
    The crucial property for the error analysis is the  following $L^4$ estimate,
    $$ \| \Pi_{K} u_{n} \|_{l^4_{\tau}L^4} \leq (K \tau^{1 \over 2})^{1 \over 2} \|u_{n}\|_{X^{0, {3\over 8}}_{\tau}}$$
 which holds uniformly for    $K \geq \tau^{- {1 \over 2}}$ and  $0<\tau \leq 1$. This estimate is proven in section \ref{sectiontechnic}.
 The $l^p_{\tau}$ norm for vector valued sequences is defined in \eqref{normelptau}.
  From this property, we see that the choice $K= \tau^{- { 1 \over 2}}$ allows one to get an estimate without loss
  similar to the continuous case \eqref{B}. Nevertheless, such a choice of $K$ yields a rather bad space discretization
   error \eqref{spacediscintro}. We shall thus optimize $K$ by taking it of the form
   $ K= \tau^{- { \alpha  \over 2}}$ for $\alpha \in [1, 2]$ to get the best possible total error.

In section \ref{section4}, we establish embedding estimates between discrete and continuous Bourgain spaces.

In section \ref{section5}, we analyze the local error of our scheme and in section \ref{section6}, we provide global error estimates. Finally in section \ref{section7}, we prove the main error estimate of Theorem \ref{maintheo}.

We conclude in section \ref{sec:numerical-ex} with numerical experiments underlying the favorable error behavior of the new scheme for rough data.

\subsection*{Notations}

We close this section with some notation that will be used throughout the paper.
For two expressions $a$ and $b$, we write $a \lesssim b$ whenever $a \leq C b$ holds with some constant $C>0$, uniformly in $\tau \in (0, 1]$ and $K \geq 1$.  We further write $a\sim b$ if $b\lesssim a \lesssim b$. When we want to emphasize that $C$ depends on an additional parameter $\gamma$, we write $a \lesssim_{\gamma} b$.
We shall also use  the notation $x_+$ for any $y>x$.

Further, we denote $\langle \,\cdot\, \rangle = ( 1 + | \cdot |^2)^{1 \over 2}$.

\section{Cauchy problem for \eqref{nlsO}}\label{sec:cp}

Let us recall the definition of Bourgain spaces. A tempered distribution $u(t,x)$ on $\mathbb{R} \times \mathbb{T}$ belongs to the Bourgain space $X^{s, b}$ if its following norm is finite
\begin{equation}
\label{defnormeBourgain}
\|u\|_{X^{s, b}}=\left(\int_{\mathbb{R}}\sum_{k \in \mathbb{Z}}\left(1+ |k|\right)^{2s}\left(1+| \sigma+k^2|\right)^{2b}|\widetilde{u}\left(\sigma, k\right)|^2 \dd\sigma\right)^{\frac{1}{2}},
\end{equation}
where $\widetilde{u}$ is the space-time Fourier transform of $u$:
$$
\widetilde{u}(\sigma, k)= \int_{\mathbb{R}\times \mathbb{T}} \e^{-i \sigma t - i k x} u(t,x) \, \dd t \dd x.
$$
We shall also use a localized version of this space, $u \in X^{s,b}(I)$ where $I \subset \mathbb{R}$ is an open  interval
 if $\|u\|_{X^{s,b}(I)}< \infty$, where
$$
\|u\|_{X^{s,b}(I)} = \inf\{\|\overline{u} \|_{X^{s,b}}, \, \overline{u}|_{I} = u  \}.
$$
When $I= (0, T)$ we will often simply use the notation $X^{s,b}(T)$.

We shall recall now well-known properties of these spaces. For details, we refer for example to \cite{Bour93a}, and the books \cite{Linares-Ponce}, \cite{Tao06}.
\begin{lemma}
\label{lemmabourgainfacile}
For $\eta\in \mathcal{C}^\infty_{c}(\mathbb{R})$, we have that
\begin{eqnarray}
\label{bourg1an}
&  \| \eta (t) \e^{i t \partial_x^2 } f \|_{X^{s,b}} \lesssim_{\eta, b} \|f\|_{H^s}, \quad s \in \mathbb{R},\, b\in \mathbb R,\, f \in H^s(\mathbb{T}), \\
  \label{bourg2an}
&   \| \eta (t) u  \|_{X^{s,b}} \lesssim_{\eta, b} \|u\|_{X^{s,b}}, \quad s \in \mathbb{R}, \, b\in\mathbb R, \\
   \label{bourg3an}
&  \| \eta({t \over T}) u \|_{X^{s,b'}} \lesssim_{\eta, b, b'} T^{b-b'} \|u\|_{X^{s,b}}, \quad s \in \mathbb{R},  -{1 \over 2} <b' \leq b <{ 1 \over 2}, \, 0< T \leq 1, \\
& \left\| \eta(t) \int_{-\infty}^t \e^{i (t-s) \Delta} F(s) \dd s \right\|_{X^{s,b}} \lesssim_{\eta, b} \|F\|_{X^{s, b-1}}, \quad s \in \mathbb{R}, \,
 b>{1 \over 2}, \\
 &  \label{bourg4an}  \|u\|_{L^\infty(\mathbb{R}, H^s)} \lesssim_{b} \|u\|_{X^{s, b}}, \quad b>1/2, \, s \in \mathbb{R}.
\end{eqnarray}
\end{lemma}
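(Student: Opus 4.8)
The plan is to reduce all five assertions to classical one-dimensional (in time) Sobolev facts by exploiting a single isometry. Writing $\widetilde{\,\cdot\,}$ for the space-time Fourier transform, I would first observe that the profile $w := \e^{-it\partial_x^2}u$ obtained by undoing the free flow satisfies $\widetilde{w}(\sigma,k) = \widetilde{u}(\sigma - k^2,k)$; performing the change of variables $\mu = \sigma + k^2$ in \eqref{defnormeBourgain} then yields the identity
\beq
\|u\|_{X^{s,b}} = \big\| \e^{-it\partial_x^2} u \big\|_{H^b(\mathbb{R};\,H^s(\mathbb{T}))}.
\eeq
This is the structural fact that drives everything: the Bourgain norm is exactly the $H^b_t H^s_x$ norm of the profile. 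Since $\e^{\pm it\partial_x^2}$ is, for each fixed $t$, an isometry of $H^s(\mathbb{T})$ and commutes with every operation acting on the time variable alone (multiplication by functions of $t$, temporal differentiation, integration in $s$), each assertion of Lemma~\ref{lemmabourgainfacile} becomes a statement in the space $H^b(\mathbb{R}_t;H^s(\mathbb{T}_x))$ in which the spatial variable is a passenger: freezing the Fourier index $k$ reduces the spatial side to a weighted $\ell^2$ sum over scalar estimates in $t$.

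With this reduction in hand, the three algebraic estimates are immediate. For \eqref{bourg1an} I would use $\e^{-it\partial_x^2}(\eta(t)\e^{it\partial_x^2}f) = \eta(t)f$, so that the left-hand side equals $\|\eta(t)f\|_{H^b_t H^s_x}$, which is $\lesssim_{\eta,b}\|f\|_{H^s}$ because $\eta \in \mathcal{C}^\infty_c(\mathbb{R})\subset H^b(\mathbb{R})$. For \eqref{bourg2an}, since $\eta(t)$ commutes with $\e^{-it\partial_x^2}$, the claim reduces to the boundedness of multiplication by $\eta$ on $H^b(\mathbb{R})$, which holds for $b\ge 0$ by the fractional Leibniz rule and extends to $b<0$ by duality. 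For \eqref{bourg4an}, undoing the flow turns the left-hand side into $\|w\|_{L^\infty_t H^s_x}$, and the bound follows from the scalar Sobolev embedding $H^b(\mathbb{R})\hookrightarrow C_0(\mathbb{R})$, valid precisely for $b>1/2$.

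The two genuinely analytic assertions are \eqref{bourg3an} and the (unlabelled) Duhamel estimate, and I expect these to be the main obstacle. After freezing the spatial frequency each reduces to a scalar lemma in $t$. For \eqref{bourg3an} I would prove $\|\eta(\cdot/T)\,g\|_{H^{b'}(\mathbb{R})}\lesssim T^{b-b'}\|g\|_{H^b(\mathbb{R})}$ by writing the rescaled cutoff on the Fourier side as $T\widehat\eta(T\,\cdot)$ and estimating the resulting convolution; the gain $T^{b-b'}$ then appears, but only under the sharp restriction $-1/2<b'\le b<1/2$, outside of which the rescaled cutoff no longer scales favourably in $H^b$. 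For the Duhamel estimate, undoing the flow gives $\eta(t)\int_{-\infty}^t G(s)\dd s$ with $G=\e^{-is\partial_x^2}F$, so it suffices to show that $g\mapsto \eta(t)\int_{-\infty}^t g(s)\dd s$ maps $H^{b-1}(\mathbb{R})$ into $H^b(\mathbb{R})$ for $b>1/2$. On the Fourier side the time integration supplies the decay $\langle\sigma\rangle^{-1}$ responsible for the gain of one derivative, while the singularity of the Heaviside symbol $1/(i\sigma)$ at $\sigma=0$ is absorbed by the cutoff $\eta$, with $b>1/2$ again entering through the embedding used to control the corresponding boundary term.

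The hard part is thus entirely contained in these two scalar time estimates and, within them, in the careful bookkeeping of the admissible ranges of $b$ and $b'$; all of these one-dimensional lemmas are classical --- precisely the estimates underpinning the contraction-mapping proof of local well-posedness --- and complete arguments may be found in the references cited immediately before the statement.
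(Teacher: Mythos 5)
Your proposal is correct and follows essentially the same route as the paper: the paper does not prove Lemma~\ref{lemmabourgainfacile} directly (it defers to \cite{Bour93a,Linares-Ponce,Tao06}), but its proof of the discrete counterpart, Lemma~\ref{bourgainfaciled}, is built on precisely your reduction --- the Bourgain norm is by definition the $H^b_t H^s_x$ norm of the profile $\e^{-it\partial_x^2}u$, after which \eqref{bourg1an}, \eqref{bourg2an}, \eqref{bourg4an} follow from one-dimensional Sobolev facts, \eqref{bourg3an} from interpolation/duality plus a low/high modulation splitting at $|T\sigma|=1$, and the Duhamel bound from Taylor-expanding the symbol $(\e^{it\sigma}-1)/(i\sigma)$ near $\sigma=0$. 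The only caveat is that in \eqref{bourg3an} the factor $T^{b-b'}$ does not come from a single convolution estimate but from the interpolation between the two endpoint bounds, exactly as you indicate in your bookkeeping remark.
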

We actually have  the continuous embedding $X^{s, b} \subset \mathcal{C}(\mathbb{R}, H^s)$ for $b>1/2$. Note that we shall discuss below an  extension of the definition of the Bourgain spaces and of this lemma to a discrete setting suitable for the analysis of numerical schemes and give the proofs in this discrete  setting.

The crucial estimate for the analysis of the cubic NLS on the torus $\mathbb{T}$ is the following:

\begin{lemma}
\label{lemmabourgaindur}
There exists a constant $C>0$ such that for every $u \in X^{0, {3\over 8}}$, we have the estimate
$$
\| u \|_{L^4( \mathbb{R} \times \mathbb{T})} \leq C \|u \|_{X^{0, {3 \over 8}}}.
$$
\end{lemma}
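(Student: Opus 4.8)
The plan is to reduce the $L^4$ bound to a Fourier-side bilinear estimate and to extract the gain in the exponent $b={3\over 8}$ from the arithmetic of the Schrödinger dispersion relation on $\mathbb{T}$. First I would write $\|u\|_{L^4(\mathbb{R}\times\mathbb{T})}^2=\||u|^2\|_{L^2}=\|u\bar u\|_{L^2}$ and compute the space–time Fourier transform of $u\bar u$ as a convolution in $(\sigma,k)$. Decomposing $u=\sum_N u_N$ into dyadic modulation pieces, where $u_N$ is supported on $\langle\sigma+k^2\rangle\sim N$ (so that $\|u\|_{X^{0,3/8}}^2\sim\sum_N N^{3/4}\|u_N\|_{L^2}^2$), it then suffices by the triangle inequality to control $\sum_{N_1,N_2}\|u_{N_1}\overline{u_{N_2}}\|_{L^2}$.

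The core is the bilinear estimate
$$\|u_{N_1}\overline{u_{N_2}}\|_{L^2(\mathbb{R}\times\mathbb{T})}\lesssim \big(\min(N_1,N_2)\big)^{1/4}\,\|u_{N_1}\|_{L^2}\,\|u_{N_2}\|_{L^2}.$$
I would prove it by Cauchy–Schwarz on the Fourier side: the transform of $u_{N_1}\overline{u_{N_2}}$ at the output $(\sigma,k)$ is an integral over the frequencies $(\sigma_1,k_1)$ of the first factor, constrained by the two modulation conditions $|\lambda_1|\sim N_1$, $|\lambda_2|\sim N_2$, where $\lambda_1=\sigma_1+k_1^2$ and $\lambda_2=(\sigma_1-\sigma)+(k_1-k)^2$. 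Bounding this by (the measure of the admissible resonant set) times (an $L^2$ mass factor whose total integral equals $\|u_{N_1}\|_{L^2}^2\|u_{N_2}\|_{L^2}^2$) reduces matters to estimating the size of the set of admissible $(\sigma_1,k_1)$. Here the arithmetic enters through the resonance identity
$$\lambda_1-\lambda_2=\sigma+k(2k_1-k),$$
which is affine in $k_1$ with slope $2k$: once the modulations are localized, $k_1$ is pinned down up to $O(1)$ integer choices, which is the elementary counting behind Zygmund's $L^4$ bound $\|\sum_n a_n e^{i(nx+n^2t)}\|_{L^4(\mathbb{T}^2)}\lesssim\|a\|_{\ell^2}$. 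Tracking the Lebesgue measure in the modulation variable (of length $\lesssim\min(N_1,N_2)$) against this lattice count yields the factor $(\min(N_1,N_2))^{1/4}$.

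With the bilinear estimate in hand I would sum the dyadic pieces. Writing $a_N=\|u_N\|_{L^2}$ and $b_N=N^{3/8}a_N$, the sum $\sum_{N_1\le N_2}(\min)^{1/4}a_{N_1}a_{N_2}$ becomes $\sum_{N_1\le N_2}N_1^{-1/8}N_2^{-3/8}b_{N_1}b_{N_2}$, and a Schur test closes it against $\sum_N b_N^2=\|u\|_{X^{0,3/8}}^2$: the two required row/column sums, $\sum_{N_2\ge N_1}N_1^{-1/8}N_2^{-3/8}\sim N_1^{-1/2}\lesssim 1$ and $\sum_{N_1\le N_2}N_1^{-1/8}N_2^{-3/8}\lesssim N_2^{-3/8}\lesssim 1$, are both uniformly bounded. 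This is precisely the bookkeeping that singles out $b={3\over 8}$ as the natural exponent, matching the claim.

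I expect the main obstacle to be the bilinear $L^2$ estimate, and within it the uniform control of the resonant set. The delicate point is the degenerate configuration of small output frequency $k$ (where the affine relation above loses its slope and the crude count of integer solutions in a dyadic modulation shell degenerates); the clean gain $(\min)^{1/4}$ is recovered only by localizing the modulations finely, to unit slabs, where the frequency $k_1$ is genuinely determined up to $O(1)$ by the arithmetic (a divisor-type bound for the representation of integers by $k_1^2+k_2^2$ with $k_1-k_2$ fixed), and then reassembling the $\sim N$ slabs inside each dyadic block using the orthogonality afforded by Plancherel. All remaining ingredients, namely the Plancherel/convolution identities and the standard localization properties collected in Lemma~\ref{lemmabourgainfacile}, are routine. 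The same template is exactly the one reproduced, in the discrete setting, in Lemma~\ref{prodd}; for the continuous statement we also refer to \cite{Bour93a,Tao06}.
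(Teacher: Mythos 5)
Your overall template---reducing the $L^4$ norm to an $L^2$ bound on a product, decomposing dyadically in the modulation variable, estimating each block by Cauchy--Schwarz on the Fourier side plus a lattice count coming from the resonance identity, and closing the dyadic sum by a Schur test---is the standard one; the paper itself does not prove this lemma (it refers to Proposition~2.13 of Tao's book) but carries out precisely this scheme for the discrete analogue in section~\ref{sectiontechnic}. The genuine gap is that your central bilinear estimate is quantitatively wrong, and the resonant-set argument you sketch for it cannot yield it. The bound $\|u_{N_1}\overline{u_{N_2}}\|_{L^2}\lesssim \min(N_1,N_2)^{1/4}\|u_{N_1}\|_{L^2}\|u_{N_2}\|_{L^2}$ is false: on the diagonal $N_1=N_2=N$ it would give $\|u_N\|_{L^4}\lesssim N^{1/8}\|u_N\|_{L^2}$, and your own Schur computation would then upgrade this to $\|u\|_{L^4}\lesssim\|u\|_{X^{0,1/8+\varepsilon}}$, which is not true. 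Concretely, take $\widetilde{u_N}=\mathrm{1}$ on $\{|k|\le N^{1/2},\ \langle\sigma+k^2\rangle\sim N\}$; then $\|u_N\|_{L^2}^2\sim N^{3/2}$ while $\| |u_N|^2\|_{L^2}\sim N^{9/4}\gg N^{1/4}\cdot N^{3/2}$. The source of the error is the size of the admissible set $A_{\sigma,k}$ of $(\sigma_1,k_1)$: for a single admissible $k_1$ the $\sigma_1$-fibre already has measure $\sim\min(N_1,N_2)$, so $|A_{\sigma,k}|\gtrsim\min(N_1,N_2)$ and can never be $\lesssim\min(N_1,N_2)^{1/2}$ as your claimed constant would require after squaring. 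The correct count is $|A_{\sigma,k}|\lesssim\min(N_1,N_2)\cdot\max(N_1,N_2)^{1/2}$, giving the bilinear constant $\min(N_1,N_2)^{1/2}\max(N_1,N_2)^{1/4}=N_1^{3/8}N_2^{3/8}(\min/\max)^{1/8}$; this is exactly what singles out $b=3/8$ and supplies the off-diagonal gain $2^{(p-q)/8}$, after which your final summation paragraph goes through essentially verbatim.

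The factor $\max(N_1,N_2)^{1/2}$ in the count is obtained cleanly, and uniformly in the output frequency $k$, by writing $\|u\|_{L^4}^2=\|u^2\|_{L^2}$ rather than $\|u\bar u\|_{L^2}$, as in section~\ref{sectiontechnic}: with $k_1+k_2=k$ fixed one has $k_1^2+k_2^2=\tfrac12\bigl(k^2+(k_1-k_2)^2\bigr)$, so the two modulation constraints confine $(k_1-k_2)^2$ to an interval of length $\lesssim\max(N_1,N_2)$ and hence $k_1-k_2$ to $\lesssim\max(N_1,N_2)^{1/2}$ values, with no degenerate case. Your choice of $u\bar u$ leads instead to the affine relation $\lambda_1-\lambda_2=\sigma+k(2k_1-k)$, which, as you observe, gives no constraint on $k_1$ when $k=0$; the repair you propose (unit modulation slabs plus a divisor bound, reassembled ``by orthogonality'') does not obviously close, since the products coming from different slab pairs are not mutually orthogonal, and in any case the zero mode is more easily dispatched by the one-dimensional embedding $X^{0,3/8}\subset L^4_tL^2_x$. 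I would rewrite the proof around the product $u^2$ and the quadratic counting above, in parallel with the discrete argument of section~\ref{sectiontechnic}.
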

Again, we refer  to \cite{Tao06} Proposition 2.13  for its proof. Note that, by duality, we also obtain that
$$
\| u \|_{X^{0, {-{3 \over 8}}}} \lesssim \| u \|_{L^{4\over 3}( \mathbb{R} \times \mathbb{T})}.
$$
By combining the two estimates with H\"{o}lder, this further implies that
\begin{equation}
\label{prodbourgain}
\| u v w  \|_{X^{0, {-{3 \over 8}}}} \lesssim \|u\|_{X^{0, {3 \over 8}}}  \|v\|_{X^{0, {3 \over 8}}}   \|w\|_{X^{0, {3 \over 8}}}.
\end{equation}

For \eqref{nlsO}, we have the following global well-posedness result.

\begin{theorem}
\label{theoNLS}
For every $T>0$ and $u_{0}\in L^2$, there exists a unique solution $u$ of \eqref{nlsO} such that $u\in \mathcal{C}([0, T], L^2) \cap X^{0, {b}}(T)$ for  any  $b \in (1/2, 5/8)$. Moreover, if $u_{0} \in H^{s_{0}}$, $s_{0}> 0$, then $ u\in \mathcal{C}([0, T], H^{s_{0}}) \cap X^{s_{0}, b}(T)$.
\end{theorem}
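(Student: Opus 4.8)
The plan is to prove local well-posedness by a contraction argument in the Bourgain space $X^{0,b}(T)$ and then to globalize using conservation of mass. Writing the solution of \eqref{nlsO} in Duhamel form and inserting a smooth temporal cut-off $\eta$, one studies the map
\[
\Phi(u)(t) = \eta(t)\,\e^{it\partial_x^2}u_0 - i\,\eta(t)\int_0^t \e^{i(t-s)\partial_x^2}\big(|u|^2u\big)(s)\dd s
\]
on a ball of $X^{0,b}(T)$ with $b\in(1/2,5/8)$. The homogeneous term is controlled by \eqref{bourg1an} and the Duhamel term by the inhomogeneous estimate in Lemma~\ref{lemmabourgainfacile}, so everything reduces to a trilinear bound on $\||u|^2u\|_{X^{0,b-1}}$. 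Since $b\le 5/8$ gives $b-1\le -3/8$ and $b\ge 3/8$ gives $X^{0,b}\subset X^{0,3/8}$, the product estimate \eqref{prodbourgain} together with the monotonicity of the $X^{0,b}$ norm in $b$ yields $\||u|^2u\|_{X^{0,b-1}}\lesssim \|u\|_{X^{0,b}}^3$.

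The crucial point is to gain a positive power of $T$ so that $\Phi$ contracts on short intervals, and this is exactly where the restriction $b<5/8$ enters. Choosing $\varepsilon\in(0,5/8-b)$ one has $b-1+\varepsilon\le -3/8$, so \eqref{prodbourgain} controls $\||u|^2u\|_{X^{0,b-1+\varepsilon}}$, while the time-localization estimate \eqref{bourg3an} (applicable because $-1/2<b-1<b-1+\varepsilon<1/2$) converts this into a bound $\lesssim T^{\varepsilon}\|u\|_{X^{0,b}}^3$ for the restricted norm. The analogous computation on differences makes $\Phi$ a contraction on a ball of radius $\sim\|u_0\|_{L^2}$ provided $T\le T_*(\|u_0\|_{L^2})$, and the Banach fixed point theorem produces a unique local solution, with $u\in\mathcal{C}([0,T_*],L^2)$ thanks to the embedding \eqref{bourg4an}.

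To pass from local to global, I would use the conservation of $\|u(t)\|_{L^2}$ along the flow of \eqref{nlsO}. Because the local existence time $T_*$ depends only on $\|u_0\|_{L^2}$ and this quantity is preserved, the construction can be iterated on successive intervals of fixed length $T_*$ to cover any prescribed $[0,T]$; uniqueness on overlaps glues these pieces into a single solution in $\mathcal{C}([0,T],L^2)\cap X^{0,b}(T)$. Since the solution is built for $b$ close to $5/8$, it automatically belongs to $X^{0,b}(T)$ for every $b\in(1/2,5/8)$.

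Finally, for data in $H^{s_0}$ with $s_0>0$, I would repeat the fixed-point argument in $X^{s_0,b}(T)$. The only genuinely new ingredient — and the main obstacle — is the fractional version of the trilinear estimate, namely $\||u|^2u\|_{X^{s_0,-3/8}}\lesssim \|u\|_{X^{s_0,3/8}}\|u\|_{X^{0,3/8}}^2$ up to permutations of the three factors. This follows by localizing the inputs in frequency: the output frequency is dominated by the largest input frequency, so the weight $\langle k\rangle^{s_0}$ can be transferred onto that single factor, after which the remaining two factors are handled by the $L^4$ bound of Lemma~\ref{lemmabourgaindur} exactly as in \eqref{prodbourgain}. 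Because this persistence-of-regularity estimate yields a local existence time still controlled by $\|u_0\|_{L^2}$, the global $L^2$ solution automatically lies in $\mathcal{C}([0,T],H^{s_0})\cap X^{s_0,b}(T)$.
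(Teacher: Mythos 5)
Your proposal is correct and follows essentially the same route as the paper: a contraction argument for the cut-off Duhamel map in $X^{0,b}(T)$, using \eqref{bourg1an} for the free term, the trilinear bound \eqref{prodbourgain} combined with the time-localization gain from $b<5/8$ (the paper's $\delta^{\epsilon_0}$ with $\epsilon_0=5/8-b$) for the nonlinearity, $L^2$ conservation to iterate globally, and a tame trilinear estimate placing the $\langle k\rangle^{s_0}$ weight on the highest-frequency factor for persistence of regularity. No substantive differences.
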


\begin{proof}
Let us recall the main steps of the proof. The existence is proven by a fixed point argument on the following truncated problem:
$$
v\mapsto F(v)
$$
such that
\begin{equation}\label{fixed}
F(v)(t)=   \eta(t)\e^{i t \partial_{x}^2} u_{0} -  i \eta(t)\int_{0}^t \e^{i(t- s)\partial_{x}^2 }
\left( \eta\left({s \over \delta}\right) |v(s)|^2 v(s)\right)\dd s,
\end{equation}
where $\eta \in [0, 1]$ is a  smooth compactly supported function which is equal to $1$ on $[-1, 1]$ and supported in $[-2, 2]$. For $|t| \leq \delta \leq 1/2 $, a fixed point of the above equation gives a solution of the original Cauchy problem, denoted by $u$.

Thanks to Lemma \ref{lemmabourgainfacile}, there exists $C>0$ which does not depend on $u_{0}$ such that
$$
\|\eta(t)\e^{it \partial_{x}^2} u_{0}\|_{X^{0, {b}}} \leq C \|u_{0}\|_{L^2}.
$$
Moreover, by using Lemma \ref{lemmabourgainfacile} and \eqref{prodbourgain}, we can estimate the Duhamel term by
\begin{multline*}
\left\| \eta(t)\int_{0}^t \e^{i(t- s)\Delta }
\left( \eta\left({s \over \delta}\right) |v(s)|^2 v(s)\right)\!\dd s \right\|_{X^{0, b}}
\leq C  \left\| \eta\left({s \over \delta}\right) |v(s)|^2 v(s)\right\|_{X^{0, b-1 } } \\
\leq C \delta^{ \epsilon_{0}}  \left\|  |v(s)|^2 v(s)\right\|_{X^{0, - \frac38}} \leq C  \delta^{ \epsilon_{0}} \|v\|_{X^{0, \frac38}}^3 \leq  C  \delta^{ \epsilon_{0}} \|v\|_{X^{0, b}}^3,
\end{multline*}
where $C>0$ is again a generic constant and $\epsilon_{0}= 5/8 - b >0$ by the choice of $b$. Therefore, we have obtained that
$$
\|F(v) \|_{X^{0, b}}  \leq C \|u_{0}\|_{L^2} + C \delta^{ \epsilon_{0}}  \|v\|_{X^{0,b}}^3.
$$
In a similar way, we obtain that if $v_{1}$ and $v_{2}$ are such that $\|v_{1}\|_{X^{0, b}} \leq R, \, \|v_{2} \|_{X^{0, b}}\leq R$, then
$$
\|F(v_{1}) - F(v_{2}) \|_{X^{0, b}} \leq 4 C \delta^{\epsilon_{0}} R^2 \|v_{1} - v_{2} \|_{X^{0, b}}.
$$
Consequently, by taking $R= 2 C \|u_{0}\|_{L^2}$, we get that there exists $\delta>0$ sufficiently small that depends only on $\|u_{0}\|_{L^2}$ such that $F$ is a contraction on the closed ball $B(0, R)$ of $X^{0, b}.$ This proves the existence of a fixed point $v$ for $F$ and hence the existence of a solution $u$  of \eqref{nlsO} on $[0, \delta]$. By using Lemma \ref{lemmabourgainfacile}, we actually get that $u\in \mathcal{C}([0, \delta], L^2)$.  Since  for $s \geq 0$,
$$
\|F(v)\|_{X^{s, b}}  \leq C \|u_{0}\|_{H^s} + C \delta^{ \epsilon_{0}}  \|v\|_{X^{0, b}}^2 \|v\|_{X^{s, b}},
$$
we also get that if $u_{0}$ is in $H^s$ then $u \in X^{s,b}( [0, \delta])$. Since the $L^2$ norm is conserved for \eqref{nlsO}, we can  reiterate the construction on $[\delta, 2 \delta], \ldots$ to get a global solution. Moreover, since $\delta$ depends only on the $L^2$ norm of $u_{0}$, we get that if $u_{0}$ is in $H^s$, $s \geq 0$ then $u \in X^{s, b}(T)$ and thus $u\in \mathcal{C}([0, T], H^s)$ for every $T$.
\end{proof}

Let us now consider $v^K$ that solves the frequency truncated equation
\beq
\label{NLSK}
i \partial_{t} v^K = - \partial_x^2  v^K + \Pi_{K}(|\Pi_{K}v^K|^2 \Pi_{K}v^K), \quad  v^K(0)= \Pi_{K} u_{0}.
\eeq

As in Theorem \ref{theoNLS}, we can easily get:

\begin{proposition}
\label{propNLSK}
For  $u_{0} \in H^{s_{0}}$,  $s_{0} \geq 0$, and $K \geq 1$, there exists a unique solution $v^K$ of \eqref{NLSK} such that $v^K \in X^{s_{0}, b}(T)$ for $b\in(1/2,5/8)$ and every $T > 0$. Moreover, for every $T>0$, there exists $M_{T}$ such that for every $K \geq 1$, we have the estimate
$$
\| v^K \|_{X^{s_{0}, b}(T)} \leq M_{T}.
$$
\end{proposition}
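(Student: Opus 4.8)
The plan is to repeat verbatim the fixed point argument from the proof of Theorem~\ref{theoNLS}, with the cubic nonlinearity $|v|^2v$ replaced by its projected counterpart $\Pi_{K}(|\Pi_{K}v|^2\Pi_{K}v)$, while tracking the $K$-dependence of every constant. Fix $b\in(1/2,5/8)$, set $\epsilon_{0}=5/8-b>0$, and introduce, in complete analogy with \eqref{fixed}, the truncated Duhamel map
\[
F_{K}(v)(t)= \eta(t)\e^{i t \partial_{x}^2}\Pi_{K} u_{0} - i \eta(t)\int_{0}^t \e^{i(t- s)\partial_{x}^2}\Big( \eta\big(s/\delta\big)\, \Pi_{K}\big(|\Pi_{K}v(s)|^2 \Pi_{K}v(s)\big)\Big)\dd s .
\]
The one point that must be handled with care is the uniformity in $K$, and this is guaranteed by the fact that $\Pi_{K}$ is a Fourier multiplier in $x$ only, with symbol $\chi^2(\cdot/K)$ satisfying $0\le\chi^2\le 1$; hence $\Pi_{K}$ is a self-adjoint contraction on every $H^s$, commutes with $\e^{it\partial_x^2}$, and is bounded, uniformly in $K$, on every space $X^{s,b}$.

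Combining this contraction property with \eqref{prodbourgain} yields the nonlinear estimate
\[
\|\Pi_{K}(|\Pi_{K} v|^2 \Pi_{K} v)\|_{X^{0,-{3\over 8}}} \le \||\Pi_{K} v|^2 \Pi_{K} v\|_{X^{0,-{3\over 8}}} \lesssim \|\Pi_{K} v\|_{X^{0,{3\over 8}}}^3 \le \|v\|_{X^{0,{3\over 8}}}^3 ,
\]
with an implicit constant independent of $K$. Feeding this into Lemma~\ref{lemmabourgainfacile} exactly as in Theorem~\ref{theoNLS}, and using $\|\Pi_{K}u_{0}\|_{L^2}\le\|u_{0}\|_{L^2}$, I would obtain $\|F_{K}(v)\|_{X^{0,b}}\le C\|u_{0}\|_{L^2}+C\delta^{\epsilon_{0}}\|v\|_{X^{0,b}}^3$ together with the matching Lipschitz bound, where $C$ does not depend on $K$. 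Choosing $R=2C\|u_{0}\|_{L^2}$ and $\delta$ small depending only on $\|u_{0}\|_{L^2}$ makes $F_{K}$ a contraction on the ball $B(0,R)\subset X^{0,b}$, producing a local solution $v^K$ on $[0,\delta]$; the higher-regularity bound $\|F_{K}(v)\|_{X^{s_{0},b}}\le C\|u_{0}\|_{H^{s_{0}}}+C\delta^{\epsilon_{0}}\|v\|_{X^{0,b}}^2\|v\|_{X^{s_{0},b}}$, together with $\|\Pi_{K}u_{0}\|_{H^{s_{0}}}\le\|u_{0}\|_{H^{s_{0}}}$, propagates the $H^{s_{0}}$ regularity on the same interval.

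To globalize and to extract the uniform bound $M_{T}$, I would first check that \eqref{NLSK} conserves the $L^2$ norm. Pairing the equation with $v^K$ and taking real parts, the dispersive term contributes $\mathrm{Re}\,\langle i\partial_x^2 v^K, v^K\rangle=0$, while self-adjointness of $\Pi_{K}$ gives $\langle \Pi_{K}(|\Pi_{K}v^K|^2\Pi_{K}v^K), v^K\rangle=\int_{\mathbb{T}}|\Pi_{K}v^K|^4\,\dd x\in\mathbb{R}$, so that $\tfrac{\dd}{\dd t}\|v^K\|_{L^2}^2=0$. Since the step $\delta$ above depends only on $\|v^K(0)\|_{L^2}=\|\Pi_{K}u_{0}\|_{L^2}\le\|u_{0}\|_{L^2}$, the local construction reiterates on $[\delta,2\delta],[2\delta,3\delta],\dots$ with the \emph{same} step $\delta$, covering $[0,T]$ in $\lceil T/\delta\rceil$ steps and giving $\|v^K\|_{X^{0,b}(T)}\lesssim_{T}\|u_{0}\|_{L^2}$; propagating the $H^{s_{0}}$ estimate across these finitely many steps produces $\|v^K\|_{X^{s_{0},b}(T)}\le M_{T}$ with $M_{T}$ independent of $K$. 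The only genuinely delicate issue, which I regard as the main (though mild) obstacle, is exactly this uniformity: one must ensure that none of the quantities $C$, $\delta$, and the product constant in \eqref{prodbourgain} secretly depends on $K$, and this is precisely what the $K$-independent nonlinear estimate and the contraction property of $\Pi_{K}$ provide.
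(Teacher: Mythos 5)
Your proposal is correct and follows essentially the same route as the paper, which simply states that the proof ``follows exactly the lines of the proof of Theorem~\ref{theoNLS}''; the points you make explicit --- that $\Pi_K$ is a self-adjoint contraction commuting with the free flow so all constants are $K$-independent, and that \eqref{NLSK} still conserves the $L^2$ norm so the local step $\delta$ can be iterated uniformly in $K$ --- are precisely the details the paper leaves implicit. Nothing further is needed.
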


We shall not detail the proof of this proposition that follows exactly the lines of the proof of Theorem \ref{theoNLS}.

\begin{rem}
\label{remlocal}
Since $\Pi_{2K} \Pi_{K}= \Pi_{K}$, we have that $\Pi_{2K} v^{K}$ solves the same equation \eqref{NLSK} with the same initial data. Hence, by uniqueness, we have that
$$
\Pi_{2K} v^K (t) = v^K(t) \quad \text{for all \ $t \in [0,T]$.}
$$
\end{rem}
We can also easily get the following corollary.
\begin{cor}
\label{corNLSK}
For  $u_{0} \in H^{s_{0}}$, $s_{0} \geq 0$  and every $T>0$, there exists $C_{T}>0$ such that for every 
$K\ge 1$ we have the estimate
$$
\|u -v^K \|_{X^{0,b }(T)} \leq  C_{T} K^{-s_0},
$$
where $b$ is as in Theorem \ref{theoNLS}.
\end{cor}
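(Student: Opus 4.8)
The plan is to control $w:=u-v^{K}$ by subtracting the Duhamel formulations of \eqref{nlsO} and \eqref{NLSK} and running a short-time contraction in $X^{0,b}$ which I then iterate over $[0,T]$. Writing $N(u)=|u|^{2}u$ and $N_{K}(v^{K})=|\Pi_{K}v^{K}|^{2}\Pi_{K}v^{K}$, the difference satisfies, on a subinterval and with the cutoffs $\eta,\eta(\cdot/\delta)$ of \eqref{fixed},
\[
w(t)=\eta(t)\e^{it\partial_{x}^{2}}(I-\Pi_{K})u_{0}-i\eta(t)\int_{0}^{t}\e^{i(t-s)\partial_{x}^{2}}\eta(s/\delta)\bigl[N(u)-\Pi_{K}N_{K}(v^{K})\bigr](s)\dd s .
\]
The first term is handled by \eqref{bourg1an}. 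Here $I-\Pi_{K}$ is the Fourier multiplier $1-\chi^{2}(k/K)$, supported on $|k|\ge K$, where $1-\chi^{2}(k/K)\le\langle k\rangle^{s_{0}}K^{-s_{0}}$; hence $\|(I-\Pi_{K})g\|_{X^{0,b'}}\le K^{-s_{0}}\|g\|_{X^{s_{0},b'}}$ for every $b'$, and in particular $\|\eta(t)\e^{it\partial_{x}^{2}}(I-\Pi_{K})u_{0}\|_{X^{0,b}}\lesssim\|(I-\Pi_{K})u_{0}\|_{L^{2}}\lesssim K^{-s_{0}}\|u_{0}\|_{H^{s_{0}}}$.

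For the Duhamel term I split the nonlinear discrepancy as
\[
N(u)-\Pi_{K}N_{K}(v^{K})=(I-\Pi_{K})N(u)+\Pi_{K}\bigl(N(u)-N_{K}(v^{K})\bigr).
\]
The source part uses the high-frequency gain above together with the regularity-weighted trilinear estimate $\|N(u)\|_{X^{s_{0},-3/8}}\lesssim\|u\|_{X^{s_{0},3/8}}\|u\|_{X^{0,3/8}}^{2}$ (the fractional Leibniz bound already invoked for $\|F(v)\|_{X^{s,b}}$ in Theorem \ref{theoNLS}), giving $\|(I-\Pi_{K})N(u)\|_{X^{0,-3/8}}\lesssim K^{-s_{0}}M_{T}^{3}$, with the uniform bounds $M_{T}$ from Theorem \ref{theoNLS} and Proposition \ref{propNLSK}. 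For the difference part I telescope
\[
N(u)-N_{K}(v^{K})=(u-\Pi_{K}v^{K})\,u\,\overline{u}+\Pi_{K}v^{K}\,(u-\Pi_{K}v^{K})\,\overline{u}+\Pi_{K}v^{K}\,\Pi_{K}v^{K}\,\overline{(u-\Pi_{K}v^{K})},
\]
and use $u-\Pi_{K}v^{K}=(I-\Pi_{K})u+\Pi_{K}w$, so that $\|u-\Pi_{K}v^{K}\|_{X^{0,3/8}}\lesssim K^{-s_{0}}M_{T}+\|w\|_{X^{0,3/8}}$. Applying \eqref{prodbourgain} (which is insensitive to conjugation of the factors) to each term, and using the boundedness of $\Pi_{K}$ on $X^{0,-3/8}$, yields $\|\Pi_{K}(N(u)-N_{K}(v^{K}))\|_{X^{0,-3/8}}\lesssim M_{T}^{2}\bigl(K^{-s_{0}}+\|w\|_{X^{0,b}}\bigr)$.

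Feeding these bounds into the Duhamel estimate of Lemma \ref{lemmabourgainfacile} and extracting a factor $\delta^{\epsilon_{0}}$ with $\epsilon_{0}=5/8-b$ via \eqref{bourg3an}, exactly as in Theorem \ref{theoNLS}, I obtain on $[0,\delta]$
\[
\|w\|_{X^{0,b}([0,\delta])}\le C K^{-s_{0}}\bigl(1+\delta^{\epsilon_{0}}M_{T}^{3}\bigr)+C\delta^{\epsilon_{0}}M_{T}^{2}\,\|w\|_{X^{0,b}([0,\delta])}.
\]
Choosing $\delta$ small, depending only on $M_{T}$ and hence uniform in $K$, so that $C\delta^{\epsilon_{0}}M_{T}^{2}\le\tfrac12$, I absorb the last term and get $\|w\|_{X^{0,b}([0,\delta])}\lesssim K^{-s_{0}}$. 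I then repeat the argument on $[\delta,2\delta],[2\delta,3\delta],\dots$: on the $j$-th step the initial datum contributes $\|w(j\delta)\|_{L^{2}}\lesssim\|w\|_{X^{0,b}([(j-1)\delta,j\delta])}$ through the embedding \eqref{bourg4an}, while the truncation source $(I-\Pi_{K})N(u)$ still contributes $O(K^{-s_{0}})$. The resulting recursion $A_{j}\le C(A_{j-1}+K^{-s_{0}})$ over the $\lceil T/\delta\rceil$ steps, followed by summing the finitely many localized norms, gives $\|w\|_{X^{0,b}(T)}\le C_{T}K^{-s_{0}}$ with $C_{T}\sim C^{T/\delta}$.

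The routine ingredients I am glossing over are the fractional Leibniz estimate and the standard gluing of the localized $X^{0,b}(I)$ norms across a finite partition of $[0,T]$. The only genuinely delicate point is the bookkeeping of the high-frequency gain: the factor $K^{-s_{0}}$ must be produced both from the data mismatch $(I-\Pi_{K})u_{0}$ and from the nonlinear truncation error $(I-\Pi_{K})N(u)$, and it is essential that the residual trilinear differences are measured in $X^{0,3/8}$, with no derivative loss, so that \eqref{prodbourgain} closes the contraction uniformly in $K$.
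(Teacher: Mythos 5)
Your proposal is correct and follows essentially the same route as the paper: subtract the Duhamel/fixed-point formulations, charge the data mismatch and the high-frequency tail of the nonlinearity to the gain $\|(I-\Pi_K)f\|_{X^{0,b}}\lesssim K^{-s_0}\|f\|_{X^{s_0,b}}$, control the remaining difference with the trilinear estimate \eqref{prodbourgain} and the $\delta^{\epsilon_0}$ factor from time localization, then close a short-time contraction and iterate over $[0,T]$. The only cosmetic difference is that you telescope $N(u)-N_K(v^K)$ in one step via $u-\Pi_K v^K=(I-\Pi_K)u+\Pi_K w$, whereas the paper inserts the intermediate term $|\Pi_K U|^2\Pi_K U$ and treats the two replacements separately; the resulting bounds are identical.
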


\begin{proof}
For appropriately chosen $\delta >0$ and $\eta$ as in \eqref{fixed}, we observe that on $[0, \delta]$,  $v^K$ is the restriction  of $V^K \in X^{s_{0}, {3 \over 8}}(\mathbb{R})$ that solves
$$
V^K(t)=   \eta(t)\e^{i t \partial_{x}^2} v^K(0) -  i \eta(t)\int_{0}^t \e^{i(t- s)\partial_{x}^2 }
\left( \eta\left({s \over \delta}\right) \Pi_{K}\Bigl( |\Pi_{K}V^K(s)|^2  \Pi_{K}V^K(s)\Bigl)\right)\dd s.
$$
Consequently, by denoting $U \in X^{s_{0}, {3 \over 8}}(\mathbb{R})$ the fixed point of  $F$ such that on $[0, \delta]$, $U= u$,   we obtain that
\begin{align*}
U (t) - V^K(t) &=  \eta(t) \e^{it \partial_x^2}  (1-\Pi_{K}) u_{0}\\
&\quad - i  \eta(t) \int_{0}^t \e^{i (t-s ) \partial_x^2 } \left( \eta\left({s \over \delta}\right)  \Pi_{K}\Bigl( |U(s)|^2 U(s) - |\Pi_{K}U(s)|^2 \Pi_{K} U(s) \Bigr) \right) \dd s  \\
&\quad - i \eta(t) \int_{0}^t \e^{i (t-s ) \partial_x^2 }  \left( \eta\left({s \over \delta}\right) \Pi_{K} \Bigl( |\Pi_{K}U(s)|^2 \Pi_{K} U(s) - |\Pi_{K}V^K(s)|^2 \Pi_{K} V^K(s) \Bigr)\right) \dd s \\
&\quad -i \eta(t) \int_{0}^t \e^{i (t-s ) \partial_x^2 }  \left( \eta\left({s \over \delta}\right) (1- \Pi_{K}) \Bigl( |U(s)|^2 U(s) \Bigr)\right)\dd s.
\end{align*}
Now, let us fix $M_{T}$ independent of $K\geq 1$ such that
$$ \| V^K \|_{X^{s_{0}, b}}  + \| U \|_{X^{s_{0}, b}}  \leq M_{T}.$$
Note that for every $f$, we have the estimate
$$
\| f - \Pi_{K} f \|_{X^{0,b}} \lesssim  {1 \over K^{s_{0}}} \| f \|_{X^{s_{0}, b}}.
$$
By employing the same estimates as before, we thus obtain that
\begin{align*}  \|U-V^K \|_{X^{0, b}}
&  \lesssim  { 1 \over K^{s_{0}}} \|u_{0}\|_{H^{s_{0}}} + \delta^{\epsilon_{0}} \| U - \Pi_{K} U \|_{X^{0,b}} \|U\|_{X^{0, b}}^2
   + \delta^{\epsilon_{0}} \|U -V^K \|_{X^{0, b}}  ( \|U\|_{X^{0, b}}^2 +  \|V^K\|_{X^{0, b}}^2) \\
&  \quad + \delta^{\epsilon_{0}} { 1 \over K^{s_{0}}}  \||U|^2 U \|_{X^{s_{0},  b-1}} \\
& \lesssim   { 1 \over K^{s_{0}}}( \|u_{0}\|_{H^{s_{0}}} + M_{T}^3)  + \delta^{\epsilon_{0}} M_{T}^2  \|U-V^K \|_{X^{0,b}}.
\end{align*}
For $\delta$ sufficiently small, this yields the desired estimate. We can then iterate in order to get the estimate on $[0, T].$
\end{proof}

Instead of performing directly a time discretization of equation \eqref{NLSK}, it will be convenient for the analysis to study a slightly modified equation. Let $u^K$ be the solution of
\begin{equation*}
i \partial_t u^K = - \partial_{x}^2 u^K
+ 2 \Pi_K \left( \Pi_{\taul} u^K \,\Pi_{\taul} \overline u^K\, \Pi_{K^+} u^K \right)
+ \Pi_K \left( \Pi_{\taul} u^K \,\Pi_{\taul} u^K\, \Pi_{K}  \overline  u^K \right)
\end{equation*}
(cf.~\eqref{nls}), again with the initial data $u^K(0) = \Pi_{K} u_{0}$. Note that the difference between this truncated equation and \eqref{NLSK} is in the trilinear terms, where we can always project at least two factors on frequencies less than $ \tau^{- 1/2}$. 
Further note that $u^K$ depends also on $\tau$ though we do not explicitly mention it in order to keep reasonable notation. However, we will henceforth link $K$ and $\tau$ by the relation
$$
K = \tau^{-\frac{\alpha}2},\quad \alpha \ge 1,
$$
with the (optimal) value of $\alpha$ still to be determined.

Again, we have existence and uniqueness of the solution.

\begin{proposition}
\label{propNLSK2}
For  $u_{0} \in H^{s_{0}}$, $s_{0} \geq 0$  and $K \geq 1$, there exists a unique solution $u^K$ of \eqref{nls} such that $u^K \in X^{s_{0}, b}(T)$ (with $b$ as in Theorem \ref{theoNLS}) for every $T > 0$. Moreover, for every $T>0$, there exists $M_{T}$ such that for every $K \geq 1$, we have the estimate
$$
\| u^K \|_{X^{s_{0}, {b}}(T)} \leq M_{T}.
$$
Further, it holds that uniformly for $K \geq 1$,
$$
\| u^K - v^K \|_{X^{0, b}(T)} \leq  C_{T}  \tau^{s_{0}}.
$$
\end{proposition}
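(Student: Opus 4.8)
The plan is as follows. The existence, uniqueness and the uniform bound $\|u^K\|_{X^{s_0,b}(T)}\le M_T$ I would obtain exactly as in the proof of Theorem~\ref{theoNLS}: a fixed point argument for the Duhamel formulation of \eqref{nls}, using Lemma~\ref{lemmabourgainfacile} together with the trilinear estimate \eqref{prodbourgain}, noting that each projection $\Pi_\bullet$ is a Fourier multiplier bounded by a constant on every $X^{s,b}$ and is therefore harmless. Since the local existence time $\delta$ then depends only on $\|u_0\|_{L^2}$ (hence not on $K$), one reiterates to reach $[0,T]$. The real content is the comparison estimate, on which I concentrate.

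The key is an exact algebraic identity between the two nonlinearities. Write $N_v(w)=\Pi_K(|\Pi_Kw|^2\Pi_Kw)$ for the nonlinearity of \eqref{NLSK} and $N_u(w)$ for that of \eqref{nls}. I decompose $\Pi_Kw=\Pi_{\taul}w+\Pi_{K^+}w=:L+H$, which is exact by \eqref{Kp}, and expand
$$
N_v(w)=\Pi_K\big((L+H)\,\overline{(L+H)}\,(L+H)\big)
$$
into eight terms classified by the number of intermediate factors $H$. Using $\Pi_{\taul}\overline w=\overline{\Pi_{\taul}w}$ and $\Pi_K\overline w=\overline{\Pi_Kw}=\bar L+\bar H$, together with the fact that the three factors commute pointwise, the four terms carrying \emph{at least two} low factors $L$ reduce to $\Pi_K(L\bar LL)+2\,\Pi_K(L\bar LH)+\Pi_K(L\bar HL)$, which is precisely $N_u(w)$ (the factor $2$ arises because $L\bar LH=H\bar LL$ as products). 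Consequently
$$
N_v(w)-N_u(w)=\Pi_K\big(L\bar HH+H\bar LH+H\bar HL+H\bar HH\big),
$$
so the difference consists exactly of the terms with two or more intermediate-frequency factors $H=\Pi_{K^+}w$.

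This is where the smallness comes from. Since $\Pi_{K^+}$ is supported on $|k|\ge\taul=\tau^{-1/2}$, I have the frequency gain $\|\Pi_{K^+}w\|_{X^{0,b}}\lesssim\tau^{s_0/2}\|w\|_{X^{s_0,b}}$. Applying \eqref{prodbourgain} (with $\Pi_K$ bounded) to the four terms above, every factor $H$ contributes one power $\tau^{s_0/2}$; as each term carries at least two such factors and $\|v^K\|_{X^{s_0,b}(T)}\le M_T$ by Proposition~\ref{propNLSK}, I get $\|N_v(v^K)-N_u(v^K)\|_{X^{0,-3/8}}\lesssim\tau^{s_0}M_T^3$. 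Now I set $e=u^K-v^K$ (so $e(0)=0$), subtract the two Duhamel formulas, and split $N_u(u^K)-N_v(v^K)=\big(N_u(u^K)-N_u(v^K)\big)+\big(N_u(v^K)-N_v(v^K)\big)$. The first bracket is a cubic difference controlled via \eqref{prodbourgain} by $\|e\|_{X^{0,b}}(\|u^K\|_{X^{0,b}}^2+\|v^K\|_{X^{0,b}}^2)$, and the second is the $O(\tau^{s_0})$ term just estimated. Inserting a cutoff $\eta(s/\delta)$ and using \eqref{bourg3an} to gain $\delta^{\epsilon_0}$ with $\epsilon_0=5/8-b>0$, I obtain on $[0,\delta]$
$$
\|e\|_{X^{0,b}([0,\delta])}\lesssim \delta^{\epsilon_0}M_T^2\,\|e\|_{X^{0,b}([0,\delta])}+\delta^{\epsilon_0}\tau^{s_0}M_T^3.
$$
Choosing $\delta$ small (depending only on $M_T$, hence on $T,s_0$, not on $K$ or $\tau$) to absorb the first term gives $\|e\|_{X^{0,b}([0,\delta])}\lesssim\tau^{s_0}$, and I iterate over the fixed number $\lceil T/\delta\rceil$ of subintervals exactly as in Corollary~\ref{corNLSK}, using the embedding \eqref{bourg4an} to pass from $\|e\|_{X^{0,b}}$ on one interval to $\|e(j\delta)\|_{L^2}$ as datum for the next; this produces a recursion $a_{j+1}\lesssim a_j+\tau^{s_0}$ with $a_0=0$ and fixed step count, yielding $\|e\|_{X^{0,b}(T)}\le C_T\tau^{s_0}$.

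The step I expect to be the main obstacle is the algebraic bookkeeping in the second paragraph: one must check that the specific, conjugation-respecting grouping defining $N_u$ reproduces \emph{exactly} the ``at least two low factors'' part of $N_v$, so that the difference lives solely on terms with two or more high factors $\Pi_{K^+}w$. Once this identity is secured, the analytic part is a routine combination of the trilinear estimate \eqref{prodbourgain}, the frequency gain of $\Pi_{K^+}$, and the standard time-localization/iteration machinery already used above.
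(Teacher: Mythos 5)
Your proposal is correct and follows essentially the same route as the paper: the paper likewise writes $\Pi_K(|\Pi_Kv^K|^2\Pi_Kv^K)=G_{\tau,K}(v^K)+R_K(v^K)$, where $R_K$ collects exactly the terms with at least two factors carrying the projection $(1-\Pi_{\tau^{-1/2}})\Pi_K$, gains $\tau^{s_0/2}$ from each such factor via the trilinear estimate \eqref{prodbourgain}, and closes with the same $\delta^{\epsilon_0}$ absorption and iteration over subintervals. Your explicit eight-term expansion just makes the paper's asserted decomposition verifiable, and your bookkeeping (the factor $2$ from $L\bar LH=H\bar LL$ and the $\Pi_K\bar u^K=\bar L+\bar H$ grouping) matches the definition of \eqref{nls}.
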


Observe that by combining the last estimate with the estimate of Corollary \ref{corNLSK}, we actually get that
\beq
\label{u-uK}
\|  u - u^K\|_{X^{0, b}(T)} \leq  C_{T}  \tau^{s_{0} \alpha \over 2}
\eeq
for $K = \tau^{ - { \alpha \over 2} }$ and  $\alpha$ such that $1 \leq \alpha \leq 2$.

\begin{proof}
The proof of the first part follows again  the lines of the  proof of Theorem \ref{theoNLS}. Let us explain how to prove the error estimate. Let us denote by $G_{\tau, K}(u^{K})$ the nonlinear term on the right-hand side of \eqref{nls}. We first observe that we can write
$$
\Pi_{K}( |\Pi_{K}v^K|^2 \Pi_{K} v^{K})=  G_{\tau, K}(v^{K}) + R_K(v^K),
$$
where the remainder is a sum of terms of the form
$$
R_K(v^K)= \sum_{(i_{1}, i_{2}, i_{3})} \Pi_{K}( Q_{i_{1}} v^{K} Q_{i_{2}} \overline{v^K} Q_{i_{3}} v^{K})
$$
and where in the sum $Q_{i}$ can be $ \Pi_{\tau^{-1/2}}$ or $ (1 - \Pi_{\tau^{-1/2}})\Pi_{K}$ and at least two different $Q_{i}$ are $ (1 - \Pi_{\tau^{-1/2}})\Pi_{K}$.
Let us denote again by $U^K$ and $V^K$ the fixed points of the extended Duhamel formulation such that uniformly for $K \geq 1$, we have
$$
\| V^K \|_{X^{s_{0}, {3 \over 8}}}  + \| U^K \|_{X^{s_{0}, {3 \over 8}}}  \leq M_{T}.
$$
Then we get that
\begin{multline*}
U^K (t) - V^K(t) =   - i  \eta(t) \int_{0}^t \e^{i (t-s ) \Delta } \left( \eta\left({s \over \delta}\right) \Bigl(G_{\tau, K}(U^K(s)) - G_{\tau, K}(V^K(s))
\Bigr)\right) \dd s  \\
- i \eta(t) \int_{0}^t \e^{i (t-s ) \Delta }  \left( \eta\left({s \over \delta}\right) R_K(V^K(s))\right)\dd s.
\end{multline*}
By using again the properties of Bourgain spaces and the estimate
$$
\| f - \Pi_{\tau^{- 1/2}} f \|_{X^{0,b}} \lesssim   \tau^{s_{0}\over 2} \| f \|_{X^{s_{0}, b}},\quad \forall f \in X^{s_{0}, b},
$$
we obtain that
\begin{align*}
\|U^K - V^K \|_{X^{0, b}}
&  \lesssim \delta^{1 \over 4} \| U^K - V^K \|_{X^{0, b}}M_{T}^2 + \| V^K - \Pi_{\tau^{-1/2}} V^K \|_{X^{0, b}}^2 \| V^K\|_{X^{0, b}}\\
& \lesssim  \delta^{1 \over 4} \| U^K - V^K \|_{X^{0, b}}M_{T}^2  + \tau^{s_{0}} M_{T}^3
\end{align*}
and we can conclude as we did before.
\end{proof}

We shall need the following corollary about the propagation of higher regularity with respect to the $b$ parameter.

\begin{cor}\label{corsb}
{
Let  $b \in (5/8, 1]$ and assume that  $s_{0}>0$, then for every    $0 \leq s_{0}' <s_{0},$ we also have that  for every $T > 0$ and uniformly in $\tau$,
$$
\|u^K \|_{X^{s_{0}',b}(T)} \leq M_{T}.
$$}
\end{cor}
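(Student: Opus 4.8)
The plan is to read this as a \emph{persistence of regularity} statement in the time–modulation index $b$, and to reduce it, via the Duhamel formulation, to a single trilinear estimate. Recall from the proof of Proposition~\ref{propNLSK2} that on each interval $[0,\delta]$ (with $\delta$ fixed, depending only on $\|u_0\|_{L^2}$ and in particular independent of $\tau$ and $K$) the solution $u^K$ is the restriction of an $U^K\in X^{s_0,b'}(\mathbb R)$, $b'\in(1/2,5/8)$, solving the extended Duhamel equation
\[
U^K(t)=\eta(t)\,\e^{it\partial_x^2}\Pi_K u_0-i\,\eta(t)\int_0^t\e^{i(t-s)\partial_x^2}\Bigl(\eta(s/\delta)\,G_{\tau,K}(U^K)(s)\Bigr)\dd s,
\]
where $G_{\tau,K}$ is the projected cubic nonlinearity of \eqref{nls}. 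The free term already lies in $X^{s_0,b}$ for \emph{every} $b$, with norm $\lesssim_b\|u_0\|_{H^{s_0}}$, by \eqref{bourg1an}, and the Duhamel estimate in Lemma~\ref{lemmabourgainfacile} is valid for every $b>1/2$. Hence, absorbing the cutoff $\eta(s/\delta)$ by \eqref{bourg3an} applied with both indices equal to $b-1\le0<1/2$ (so the $T$-power is $\delta^0=1$, uniformly in $\delta\le1$), estimating $\|U^K\|_{X^{s_0',b}}$ reduces to controlling $\|G_{\tau,K}(U^K)\|_{X^{s_0',\,b-1}}$.

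I would then observe that \emph{no} fixed-point or smallness argument is needed: since Proposition~\ref{propNLSK2} already furnishes the a priori bound $\|U^K\|_{X^{s_0,b'}}\le M_T$, it suffices to prove a trilinear estimate in which all three factors are measured in the already-controlled space $X^{s_0,b'}$. Writing $G_{\tau,K}$ as a finite sum of terms $\Pi_K(Q_1 f_1\,Q_2\overline{f_2}\,Q_3 f_3)$, where each $Q_j\in\{\Pi_{\taul},\Pi_{K^+},\Pi_K\}$ is a Fourier multiplier with symbol in $[0,1]$ and is therefore bounded on every $X^{s,b}$ \emph{uniformly} in $\tau,K$, it is enough to establish
\[
\bigl\|f_1\,\overline{f_2}\,f_3\bigr\|_{X^{s_0',\,b-1}}\lesssim\prod_{j=1}^3\|f_j\|_{X^{s_0,b'}},\qquad b\in(5/8,1],\ b'\in(1/2,5/8),\ 0\le s_0'<s_0.
\]
Moreover, by the elementary monotonicity $\|\cdot\|_{X^{s_0',b}}\le\|\cdot\|_{X^{s,b}}$ for $s_0'\le s$, it is enough to treat $s_0'$ arbitrarily close to $s_0$, i.e.\ to work with an arbitrarily small gap $\rho:=s_0-s_0'>0$.

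The heart of the matter, and the step I expect to be the main obstacle, is this trilinear estimate at a $b$-index above $5/8$: the output modulation exponent $b-1\in(-3/8,0]$ is strictly larger than the $-3/8$ afforded by the bare $L^4$ bound of Lemma~\ref{lemmabourgaindur}, so a genuine gain must be produced. To create the missing weight $\langle\sigma+k^2\rangle^{\,b-5/8}$ I would use the cubic resonance relation
\[
(\sigma+k^2)-(\sigma_1+k_1^2)+(\sigma_2+k_2^2)-(\sigma_3+k_3^2)=2(k-k_1)(k-k_3),\qquad k=k_1-k_2+k_3,
\]
which forces the largest of the four modulations to dominate the product of the two frequency gaps. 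Decomposing frequency space according to which modulation is largest, one trades this gain against the Sobolev surplus: the frequency gaps that would inflate $\langle k\rangle^{s_0'}$ are compensated by the spare weights $\langle k_j\rangle^{s_0}$ together with the spare modulation weights $\langle\sigma_j+k_j^2\rangle^{\,b'-3/8}$ (note $b'-3/8>1/8$), and the residual pieces are closed by Lemma~\ref{lemmabourgaindur} and its higher-integrability companions (for instance the periodic $L^6$ Strichartz bound $\|f\|_{L^6(\mathbb R\times\mathbb T)}\lesssim_\varepsilon\|f\|_{X^{\varepsilon,1/2+}}$, which settles the endpoint $b=1$ directly with only an $\varepsilon$-loss of regularity). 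Since $b$ ranges over $(5/8,1]$, an increase of at most $3/8$, the trade can be carried out within the gap $\rho$, if convenient by iterating the estimate a bounded number of times, each step raising $b$ by a fixed small amount; the delicate and error-prone part is precisely the bookkeeping of dominant-modulation versus frequency-gap regimes, but it is quantitatively favourable enough that an arbitrarily small $\rho$ suffices.

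Feeding this estimate back into the Duhamel identity yields $\|U^K\|_{X^{s_0',b}}\lesssim\|u_0\|_{H^{s_0'}}+M_T^3$ on $[0,\delta]$, with a constant independent of $\tau$ and $K$ because every ingredient is parameter-free and $\delta$ is fixed. Iterating over the successive intervals $[\delta,2\delta],\dots$ up to time $T$, exactly as at the end of the proof of Theorem~\ref{theoNLS}, and redefining $M_T$, gives the claimed uniform bound $\|u^K\|_{X^{s_0',b}(T)}\le M_T$.
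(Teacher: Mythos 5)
Your framework (Duhamel formulation, a priori bound from Proposition~\ref{propNLSK2}, reduction to a single trilinear estimate at modulation index $b-1$) is exactly the paper's, but the step you single out as ``the heart of the matter'' rests on a misapprehension and is where your argument has a genuine gap. You claim that because $b-1\in(-3/8,0]$ exceeds the $-3/8$ supplied by Lemma~\ref{lemmabourgaindur}, ``a genuine gain must be produced,'' and you propose to produce it via the resonance identity and a case analysis on the dominant modulation. No gain is needed: since $b-1\le 0$ for every $b\in(5/8,1]$, the modulation weight $\langle\sigma+k^2\rangle^{b-1}$ is bounded by $1$, so one has the trivial bound $\|F\|_{X^{s_0',b-1}}\lesssim\|\langle\partial_x\rangle^{s_0'}F\|_{L^2(\mathbb{R}\times\mathbb{T})}$ for the whole range of $b$, not merely at the endpoint $b=1$. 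The generalized Leibniz rule \eqref{Leibniz} then reduces matters to $\|\langle\partial_x\rangle^{s_0'}u^{K,\eta}\|_{L^6(\mathbb{R}\times\mathbb{T})}^3$, and Bourgain's embedding $X^{\varepsilon,b'}\subset L^6(\mathbb{R}\times\mathbb{T})$ for $b'>1/2$ --- the very estimate you cite in parentheses --- closes the argument with $s_0'+\varepsilon\le s_0$ and $b'<5/8$, so that Proposition~\ref{propNLSK2} applies. This is precisely the paper's proof; your parenthetical remark for $b=1$ is that proof, and it covers all of $(5/8,1]$ verbatim.

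As written, however, your main line of argument is incomplete: the trilinear estimate $\|f_1\overline{f_2}f_3\|_{X^{s_0',b-1}}\lesssim\prod_j\|f_j\|_{X^{s_0,b'}}$ is only asserted, with the dominant-modulation versus frequency-gap bookkeeping explicitly deferred (``delicate and error-prone''). That bookkeeping is nontrivial --- it is essentially the machinery of Bourgain's bilinear refinements --- and since you neither carry it out nor verify that the exponents close for arbitrarily small $\rho=s_0-s_0'$ uniformly over $b\in(5/8,1]$, the proof is not complete on the route you chose. The fix is not to do that work but to notice it is unnecessary: discard the modulation weight first, then apply Leibniz and the $L^6$ embedding.
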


\begin{proof}
{
Since $u^K$ solves \eqref{nls}, we have that
$$
u^K(t) = \e^{it \partial_{x}^2} \Pi_{K}u_{0}+ \int_{0}^t \e^{i(t-s) \partial_{x}^2} F(u^K(s))\dd s,
$$
where we set for short
\begin{align*}
i F(u^K) = 2 \Pi_K \left( \Pi_{\taul} u^K \,\Pi_{\taul} \overline u^K\, \Pi_{K^+} u^K \right)
+ \Pi_K \left( \Pi_{\taul} u^K \,\Pi_{\taul} u^K\, \Pi_{K}  \overline  u^K \right).
\end{align*}
Let $u^{K,\eta}$ denote the solution of the truncated Duhamel equation
\begin{equation}\label{duhameltruncated}
u^{K,\eta}(t) = \eta(t)  \e^{it \partial_{x}^2} u_{0}+ \eta(t)\int_{0}^t \e^{i(t-s)\partial_{x}^2}  \eta(s) F(u^{K,\eta}(s))\dd s
\end{equation}
that belongs to the global Bourgain space $X^{s_{0}, b}(\mathbb{R} \times \mathbb{T})$ for $b \in (1/2, 5/8)$ as established in the previous proposition. From the same estimates as before, we obtain that
$$
\|u^{K,\eta}\|_{X^{s_{0}',b}} \lesssim \|u_{0}\|_{H^{s_{0}'}} + \| F(u^{K,\eta}) \|_{X^{s_{0}',b-1}}.
$$
In order to estimate $F(u^{K,\eta})$, we first just use that $b- 1 \leq 0$ so that
$$   \| F(u^{K,\eta}) \|_{X^{s_{0}',b-1}} \lesssim  \| \langle \partial_{x} \rangle^{s_{0}'}F(u^{K,\eta}) \|_{L^2(\mathbb{R} \times \mathbb{T})}$$
where $\langle \partial_{x} \rangle^s$ stands for the Fourier multiplier  $\langle k \rangle^s$.
%
Then, by using  the generalized Leibniz rule  (which reads, see for example \cite{Kato-Ponce}, 
\begin{equation}
\label{Leibniz} \|\langle \partial_{x} \rangle^s(fg) \|_{L^p} \lesssim \| \langle \partial_{x} \rangle^s  f \|_{L^{p_{1}}} \|g\|_{L^{q_{1}}} +  \|f\|_{L^{p_{2}}} \|
\langle \partial_{x} \rangle^s g \|_{L^{q_{2}}}
\end{equation}
for every $s>0$ and every $p \in (1, \infty), \, p_{1}, \, p_{2}, \, q_{1}, \, q_{2} \in (1, +\infty]$
such that $p^{-1}= p_{1}^{-1} + q_{1}^{-1}= p_{2}^{-1}+ q_{2}^{-1}$)
we get that
\begin{equation}
\label{revisemieux}
\|F(u^{K,\eta})\|_{X^{s_{0}', b-1}} \lesssim \| \langle \partial_{x} \rangle^{s_{0}'}u^{K,\eta}\|_{L^6(\mathbb{R} \times \mathbb{T} ) }^3.
\end{equation}
To conclude, we  use another $X^{s,b}$ space  estimate due to Bourgain \cite{Bour93a}:  for every $\varepsilon>0$ and $b'>1/2$, 
we have the continuous embedding $X^{\varepsilon, b'} \subset L^6(\mathbb{R} \times \mathbb{T})$, that is to say, 
for every $f \in  X^{\varepsilon, b'}$, we have
$$ \| f \|_{L^6(\mathbb{R} \times \mathbb{T})} \lesssim \|f \|_{X^{\varepsilon, b'}}.$$
By using this last estimate in \eqref{revisemieux}, we thus get that
$$ \|F(u^{K,\eta})\|_{X^{s_{0}', b-1}} \lesssim   \|u^{K,\eta}\|_{X^{s_{0}'+\varepsilon, b'}}^3.$$
Since we can choose  $b'$  less than $5/8$ and  $\varepsilon >0$ such that $s_{0}' + \varepsilon \leq s_{0}$,
the right-hand side is  already controlled thanks to Proposition \ref{propNLSK2}. This ends the proof.}
\end{proof}

\section{Discrete Bourgain spaces}
\label{sec:discbourg}

For a sequence $(u_{n})_{n \in \mathbb{Z}}$, we shall define its Fourier transform as
\begin{equation}
\label{discretefourier}
\mathcal{F}_{\tau}(u_{n}) (\sigma)= \tau \sum_{m \in \mathbb{Z}} u_{m} \,\e^{ i m \tau \sigma}.
\end{equation}
This defines a periodic function on $ [-\pi/\tau, \pi/\tau]$ and we have the inverse Fourier transform formula
$$
u_m =  \frac1{2\pi} \int_{-{\pi \over \tau}}^{\pi \over \tau}  \mathcal{F}_{\tau}(u_{n})(\sigma) \,\e^{- i m \tau \sigma}\dd \sigma.
$$
With these definitions the Parseval identity reads
$$
\|u_{n}\|_{l^2_{\tau}}= \| \mathcal{F}_{\tau} (u_n) \|_{L^2(-\pi/\tau, \pi/\tau)},
$$
where the norms are defined by
$$
\|u_{n}\|_{l^2_{\tau}}^2 = \tau \sum_{n \in \mathbb{Z}} |u_{n}|^2, \quad
\| \mathcal{F}_{\tau} (u_n) \|_{L^2(-\pi/\tau, \pi/\tau)}^2 = {1 \over 2 \pi} \int_{-{\pi \over \tau}}^{\pi \over \tau} | \mathcal{F}_{\tau} (u_n) (\sigma) |^2
\dd \sigma.
$$
In this section, we write $L^2$ instead of $L^2 (-\pi/\tau, \pi/\tau)$ for short. We stress the fact that this is not the standard way of normalizing the Fourier series.

We then define in a natural way Sobolev spaces $H^b_{\tau}$ of sequences $(u_{n})_{n\in\mathbb{Z}}$ by \\
$$
\| u_{n}\|_{H^b_{\tau}} =  \left\| \langle d_{\tau}( \sigma)\rangle ^b \mathcal F_\tau(u_n) \right \|_{L^2},
$$
with $d_{\tau}(\sigma)=\frac{\e^{i \tau \sigma} - 1}\tau$ so that we have equivalent norms
$$
\| u_{n}\|_{H^b_{\tau}} =\|\langle D_{\tau}\rangle^b u_{n}\|_{l^2_{\tau}},
$$
where the operator $D_{\tau}$ is defined by $ (D_{\tau}(u_{n}))_{n}=\left( \frac{u_{n-1} - u_{n}}\tau\right)_{n}$  since by definition of the Fourier transform
$$
\mathcal{F}_{\tau}( D_{\tau} u_{n}) (\sigma) =d_{\tau}(\sigma) \mathcal{F}_{\tau}(u_{n}) (\sigma).
$$
Note that $d_{\tau}$ is $2\pi/\tau$ periodic and that uniformly in $\tau$, we have $|d_{\tau}(\sigma)| \sim | \sigma |$ for $|\tau \sigma | \leq \pi$.

For sequences of functions $(u_{n}(x))_{n \in \mathbb{Z}},$ we define the Fourier transform $\widetilde{u_{n}}(\sigma, k)$ by
$$
\mathcal F_{\tau,x}(u_n)(\sigma,k) =\widetilde{u_{n}} (\sigma, k)= \tau \sum_{m \in \mathbb{Z}} \widehat{u_{m}}(k) \,\e^{i m \tau \sigma}, \quad \widehat{u_{m}}(k)= {1 \over 2\pi} \int_{-\pi}^\pi u_{m}(x) \,\e^{-i k x}\dd x.
$$
Parseval's identity then reads
\begin{equation}\label{parseval}
\| \widetilde{u_{n}}\|_{L^2l^2}= \|u_{n}\|_{l^2_{\tau}L^2},
\end{equation}
where
$$
\| \widetilde{u_{n}}\|_{L^2l^2}^2 = \int_{-{\pi \over \tau}}^{\pi\over \tau} \sum_{k \in \mathbb{Z}}
|\widetilde{u_{n}}(\sigma, k)|^2 \dd \sigma, \quad
\|u_{n}\|_{l^2_{\tau}L^2}^2 = \tau \sum_{m \in \mathbb{Z}} \int_{-\pi}^\pi  |u_{m}(x)|^2 \dd x.
$$
We then define the discrete Bourgain spaces $X^{s,b}_\tau$ for $s\ge 0$, $b\in\mathbb R$, $\tau>0$ by
\begin{equation}\label{bourgdef1}
\| u_n \|_{X^{s,b}_{\tau}}= \| \e^{-i n \tau \partial_{x}^2 } u_{n}\|_{H^b_{\tau} H^s }=  \|
\langle D_{\tau}\rangle^b \langle \partial_{x} \rangle^s (\e^{-i n \tau \partial_{x}^2 } u_{n})\|_{l^2_{\tau} L^2 }.
\end{equation}
As in the continuous case, we obtain the following properties.

\begin{lemma}\label{lemequiv}
With the above definition, we have that
\begin{equation}\label{norm2}
\| u_n \|_{X^{s,b}_{\tau}} \sim  \left\| \langle k \rangle^s \langle  d_{\tau}(\sigma -k^2)  \rangle^b \widetilde{u_n}(\sigma, k)  \right\|_{L^2l^2}.
\end{equation}
Moreover, for $s \in \mathbb{R}$ and $b> 1/2$, we have that $X^{s,b}_{\tau} \subset l^\infty_{\tau}H^s$:
\begin{equation}\label{sobbourg}
\|u_{n}\|_{l^\infty_{\tau}H^s } \lesssim_{b} \| u_{n}\|_{X^{s,b}_{\tau}}.
\end{equation}
\end{lemma}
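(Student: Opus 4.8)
The plan is to prove both assertions by passing to the space-time Fourier side and exploiting the $2\pi/\tau$-periodicity of the discrete-time transform. Throughout I would set $v_n = \e^{-i n \tau \partial_x^2} u_n$, so that by the definition \eqref{bourgdef1} one has $\|u_n\|_{X^{s,b}_{\tau}} = \|v_n\|_{H^b_\tau H^s}$.

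For \eqref{norm2}, the key computation is to relate $\widetilde{v_n}$ to $\widetilde{u_n}$. Since $\partial_x^2 \e^{ikx} = -k^2 \e^{ikx}$, the multiplier $\e^{-i n \tau \partial_x^2}$ acts on the $k$-th spatial Fourier coefficient as multiplication by $\e^{i n \tau k^2}$, so that directly from the definition \eqref{discretefourier} of the transform one obtains $\widetilde{v_n}(\sigma, k) = \widetilde{u_n}(\sigma + k^2, k)$. Next I would record the characterization $\|v_n\|_{H^b_\tau H^s} \sim \|\langle k\rangle^s \langle d_\tau(\sigma)\rangle^b \widetilde{v_n}(\sigma,k)\|_{L^2 l^2}$, which follows from Parseval \eqref{parseval} together with the fact that $\langle D_\tau\rangle^b$ and $\langle \partial_x\rangle^s$ act as the Fourier multipliers $\langle d_\tau(\sigma)\rangle^b$ and $\langle k\rangle^s$. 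Substituting the identity for $\widetilde{v_n}$ gives $\|u_n\|_{X^{s,b}_\tau}\sim\|\langle k\rangle^s\langle d_\tau(\sigma)\rangle^b\widetilde{u_n}(\sigma+k^2,k)\|_{L^2 l^2}$. The final step is the change of variables $\sigma \mapsto \sigma - k^2$ in the $\sigma$-integral: since both $d_\tau$ and $\widetilde{u_n}(\,\cdot\,, k)$ are $2\pi/\tau$-periodic in $\sigma$, and the integral of a $2\pi/\tau$-periodic function over any interval of length $2\pi/\tau$ is independent of the basepoint, this shift is legitimate even though $k^2$ is not a multiple of $2\pi/\tau$. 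It produces the weight $\langle d_\tau(\sigma - k^2)\rangle^b$ and yields \eqref{norm2}.

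For the embedding \eqref{sobbourg}, I would again use $v_n$ and note that $\e^{i n \tau \partial_x^2}$ is unitary on $H^s$, so that $\|u_n\|_{l^\infty_\tau H^s} = \|v_n\|_{l^\infty_\tau H^s}$ while $\|u_n\|_{X^{s,b}_\tau} = \|v_n\|_{H^b_\tau H^s}$. Setting $g_n = \langle \partial_x\rangle^s v_n \in L^2(\mathbb{T})$, it then suffices to prove $\sup_n \|g_n\|_{L^2} \lesssim_b \|\langle D_\tau\rangle^b g_n\|_{l^2_\tau L^2}$. Writing $g_n$ via the inverse transform in time, applying Minkowski's integral inequality in $x$ and then Cauchy–Schwarz with the weight $\langle d_\tau(\sigma)\rangle^b$, and using Parseval fibrewise in $x$, I would reduce the claim to the uniform-in-$\tau$ estimate
\[
\int_{-\pi/\tau}^{\pi/\tau} \langle d_\tau(\sigma)\rangle^{-2b}\dd\sigma \lesssim_b 1, \qquad b > 1/2.
\]

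The main obstacle, and the only place where both the hypothesis $b>1/2$ and the uniformity in $\tau$ genuinely enter, is this last integral, which is the discrete analogue of the Sobolev embedding $H^b(\mathbb{R})\hookrightarrow L^\infty(\mathbb{R})$. I would settle it using the stated comparison $|d_\tau(\sigma)| \sim |\sigma|$ for $|\tau\sigma|\le\pi$: on the entire integration range this gives $\langle d_\tau(\sigma)\rangle^{-2b} \sim \langle \sigma\rangle^{-2b}$, so the integral is dominated by $\int_{\mathbb{R}} \langle\sigma\rangle^{-2b}\dd\sigma$, which is finite and independent of $\tau$ precisely when $2b>1$. Keeping track of the constants then delivers $\sup_n \|g_n\|_{L^2} \lesssim_b \|\langle D_\tau\rangle^b g_n\|_{l^2_\tau L^2}$, which unwinds to \eqref{sobbourg}.
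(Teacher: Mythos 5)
Your proposal is correct and follows essentially the same route as the paper: for \eqref{norm2} you establish the translation identity $\widetilde{v_n}(\sigma,k)=\widetilde{u_n}(\sigma+k^2,k)$ and conclude by a change of variables (your explicit appeal to the $2\pi/\tau$-periodicity of $d_\tau$ and of $\widetilde{u_n}(\cdot,k)$ to justify the shift is exactly what the paper leaves implicit), and for \eqref{sobbourg} you reduce via Cauchy--Schwarz to the uniform bound $\int_{-\pi/\tau}^{\pi/\tau}\langle d_\tau(\sigma)\rangle^{-2b}\dd\sigma\lesssim_b 1$, settled by $|d_\tau(\sigma)|\sim|\sigma|$ on $|\tau\sigma|\le\pi$ and $2b>1$, just as in the paper.
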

The weight $d_{\tau}(\sigma -k^2)$ obviously vanishes if $\tau(\sigma - k^2)= 2 m \pi$ for $m \in \mathbb{Z}$. For a localized function such that $k$ is constrained to $|k| \lesssim  \tau^{-{1 \over 2}}$ this will behave like in the continuous case with only a cancellation when $\sigma = k^2$. 
For larger frequencies, however, there are additional cancellations that will create some loss in the product estimates.

Note that the seemingly different behavior that we have here in the discrete case compared with
the definition \eqref{defnormeBourgain} in the continuous case comes from our definition \eqref{discretefourier} of the discrete Fourier transform.
Let us recall that in the continuous case 
 $$\|u \|_{X^{s,b}}=  \| \overline{u} \|_{X^{s,b}_{\sigma = k^2}},$$
 where $ \| \overline{u} \|_{X^{s,b}_{\sigma = k^2}}= \| \langle k \rangle^s \langle \sigma - k^2 \rangle^b \tilde u \|_{L^2(\mathbb{R}\times
 \mathbb{Z})}$
 so that we can easily deduce the properties of $X^{s,b}_{\sigma = k^2}$  from the properties of $X^{s,b}$.

\begin{proof}
Let us set $ f_{n}(x)= \e^{-i n \tau \partial_{x}^2} u_{n}(x)$. From the definition of $\mathcal F_\tau$, we get that
$$
\widetilde{f_n}(\sigma, k)= \tau \sum_{m \in \mathbb{Z}} \widehat{u_{m}}(k) \,\e^{ i m \tau ( \sigma  + k^2)}
$$
so that
\begin{equation}\label{transl}
\widetilde{f_n}(\sigma, k) = \widetilde{u_n}( \sigma + k^2, k).
\end{equation}
Therefore,
$$
\| u_n \|_{X^{s,b}_{\tau}}^2 =  \sum_{k \in \mathbb{Z}} \int_{-{\pi \over \tau}}^{\pi \over \tau}
\langle d_{\tau}(\sigma) \rangle^{2b} \langle k \rangle^{2s} |\widetilde{u_n}( \sigma + k^2, k )|^2 \dd \sigma
$$
and the result follows by a change of variables.

To prove the embedding \eqref{sobbourg}, it suffices to prove that
$$
\| f_{n} \|_{l^\infty_{\tau}H^s} \lesssim \|f_{n}\|_{H^b_{\tau}H^s}.
$$
Since
$$
\widehat{f_n}(k)= \int_{-{\pi \over \tau}}^{\pi \over \tau} \widetilde{f_m}(\sigma, k) \e^{-in \tau \sigma}\dd \sigma,
$$
we get from Cauchy--Schwarz that
$$
| \widehat{f_n}(k)| \lesssim  \left(  \int_{-{\pi \over \tau}}^{\pi \over \tau}  { 1 \over \langle d_{\tau} (\sigma ) \rangle^{2b}}
\dd \sigma \right)^{1 \over 2} \| \langle d_{\tau}(\sigma)  \rangle^{b}  \widetilde{f_m}( \sigma, k)\|_{L^2}.
$$
The result then follows by multiplying the above inequality by $\langle k \rangle^s$ and taking the $L^2$ norm with respect to $k$.
\end{proof}

\begin{rem}\label{remshift}
From Lemma \ref{lemequiv}, we can make the following useful observation
\begin{equation}\label{bshift}
\sup_{\delta \in [-4, 4]} \| \e^{i n\tau \delta \partial_{x}^2} u_{n}\|_{X^{s, b}_{\tau}} \lesssim_{b} \|u_{n}\|_{X^{s,b}_{\tau}}.
\end{equation}
Note that this follows at once from $|d_{\tau}(\sigma - k^2 + \delta)| \lesssim  \langle d_{\tau}(\sigma - k^2) \rangle$.
\end{rem}

\begin{rem}\label{remembedd}
 Since $\langle d_{\tau}(\sigma) \rangle \lesssim { 1 \over  \tau}$, the discrete spaces satisfy the embedding
\begin{equation}\label{embdisc1}
\|u_{n}\|_{X^{0, b}_{\tau}} \lesssim { 1 \over  \tau^{b-b'}} \|u_{n}\|_{X^{0, b'}_\tau}, \quad b \geq b'.
\end{equation}
Indeed, from the above observation we get that the inequality is true for $b \geq 0$, $b'=0$. Next, by interpolation we obtain the case $b \geq b' \geq 0$. The case $0 \geq b \geq b'$ then follows by duality and the general case by composition.
\end{rem}

We shall now establish the counterpart of Lemma \ref{lemmabourgainfacile} at the discrete level.

\begin{lemma}\label{bourgainfaciled}
For $\eta \in \mathcal{C}^\infty_{c}(\mathbb{R})$ and $\tau\in(0,1]$, we have that
\begin{align}
\label{bourg1} &\| \eta(n \tau)  \e^{in \tau \partial_{x}^2} f\|_{X^{s,b}_{\tau}} \lesssim_{\eta, b} \|f\|_{H^s}, \quad s \in \mathbb{R}, \, b \in \mathbb{R}, \, f \in H^s, \\
\label{bourg2} &\| \eta(n \tau)  u_{n}\|_{X^{s,b}_{\tau}} \lesssim_{\eta, b} \|u_{n}\|_{X^{s,b}_{\tau}}, \quad s \in \mathbb{R}, \, b \in \mathbb{R} , \, u_{n} \in X^{s,b}_{\tau},\\
\label{bourg3} &\left\| \eta\left(\frac{n\tau}T \right) u_{n} \right\|_{X^{s,b'}_{\tau}} \lesssim_{\eta, b, b'} T^{b-b'} \|u_{n}\|_{X^{s,b}_{\tau}}, \quad s \in \mathbb{R},  -{1 \over 2} <b' \leq b <{ 1 \over 2},\, 0< T = N \tau  \leq 1, \, N \geq 1.
\end{align}
In addition, for
$$
U_{n}(x)= \eta(n \tau) \tau \sum_{m=0}^n  \e^{i ( n-m ) \tau \partial_{x}^2}  u_{m}(x),
$$
we have
\begin{equation}
\label{bourg4}\|U_{n}\|_{X^{s,b}_{\tau}} \lesssim_{\eta, b} \|u_{n}\|_{X^{s, b-1}_{\tau}}, \quad s \in \mathbb{R}, \, b>1/2.
\end{equation}
We stress that all given estimates are uniform in $\tau$.
\end{lemma}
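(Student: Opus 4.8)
The plan is to prove the discrete analogue of Lemma \ref{lemmabourgainfacile} by transferring each estimate to the Fourier side via the equivalent norm \eqref{norm2}, exploiting that the discrete Fourier transform \eqref{discretefourier} is just the continuous Fourier transform of the piecewise-constant interpolant, and that the relevant weights $\langle d_\tau(\sigma-k^2)\rangle$ mimic $\langle\sigma-k^2\rangle$ uniformly in $\tau$ in the region $|\tau\sigma|\le\pi$. The key structural observation is that the estimates \eqref{bourg2an}--\eqref{bourg4an} in the continuous case rely only on the interplay between the cutoff $\eta$, the time weight $\langle\sigma\rangle^b$, and the convolution introduced by multiplication/integration in $t$; each of these has a clean discrete counterpart once we translate by $k^2$ as in \eqref{transl}.

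For \eqref{bourg1}, after applying $\e^{-in\tau\partial_x^2}$ the sequence becomes $\eta(n\tau)f$, so by \eqref{norm2} the estimate reduces to bounding $\|\langle d_\tau(\sigma)\rangle^b\,\mathcal F_\tau(\eta(n\tau))\|_{L^2}\,\|f\|_{H^s}$; the first factor is finite because $\mathcal F_\tau(\eta(n\tau))$ is a Riemann-sum approximation to $\widehat\eta$ and, using $|d_\tau(\sigma)|\lesssim\langle\sigma\rangle$ on $[-\pi/\tau,\pi/\tau]$ together with the rapid decay of $\widehat\eta$, one controls it uniformly in $\tau$. For \eqref{bourg2}, multiplication by $\eta(n\tau)$ becomes convolution of $\mathcal F_\tau(u_n)$ with $\mathcal F_\tau(\eta(n\tau))$ on the Fourier side (after the $k^2$ shift), and the weight estimate $\langle d_\tau(\sigma-\sigma')\rangle^{|b|}\langle d_\tau(\sigma')\rangle^b\lesssim\langle d_\tau(\sigma)\rangle^b\langle d_\tau(\sigma-\sigma')\rangle^{|b|+|b|}$ lets one absorb the loss into the (finite) weighted $L^1$ mass of $\mathcal F_\tau(\eta(n\tau))$, giving a uniform-in-$\tau$ Young-type bound. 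Estimate \eqref{bourg3} is the scaling/time-localization bound: I would mimic the continuous proof, splitting into the regime where $|d_\tau(\sigma-k^2)|$ is comparable to $1/T$ and its complement, and using the extra decay of $\mathcal F_\tau(\eta(n\tau/T))$ at scale $T$ to gain the factor $T^{b-b'}$; the constraint $T=N\tau\le1$ ensures the discrete and continuous scalings align.

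For the discrete Duhamel estimate \eqref{bourg4}, the strategy is to rewrite the discrete sum $\tau\sum_{m=0}^n$ so that, after conjugating by $\e^{-in\tau\partial_x^2}$, it becomes a discrete convolution in time against a kernel that is the discrete primitive; on the Fourier side this corresponds to division by $d_\tau(\sigma)$, up to the multiplier coming from $\eta(n\tau)$. The gain of one power of the weight comes precisely from $|d_\tau(\sigma)|^{-1}\sim|\sigma|^{-1}$ for low frequencies, matching the continuous $b\mapsto b-1$ shift, while for high frequencies $|\tau\sigma|\sim\pi$ one uses $b>1/2$ and boundedness of $d_\tau$ to close.

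The main obstacle will be \eqref{bourg4}: unlike the continuous Duhamel formula, the discrete sum truncated at $n$ together with the cutoff $\eta(n\tau)$ does not diagonalize cleanly, so the kernel of the discrete time-integration operator must be analyzed carefully, separating the resonant contribution near $d_\tau(\sigma)=0$ (which produces the genuine $\langle d_\tau\rangle^{-1}$ gain and requires $b>1/2$ for summability) from the nonresonant part. One must verify that the artificial cancellations of $d_\tau$ at $\tau(\sigma-k^2)\in2\pi\mathbb Z$ do not spoil the estimate here; since those spurious zeros occur only at the boundary of the fundamental domain $|\tau\sigma|\le\pi$, they are harmless for \eqref{bourg4}, but making this rigorous uniformly in $\tau$ is the delicate point, and it is the analogue of this phenomenon that later forces the loss in the high-frequency product estimates alluded to after Lemma \ref{lemequiv}.
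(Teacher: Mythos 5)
Your overall route coincides with the paper's: everything is transferred to the Fourier side via \eqref{norm2} and \eqref{transl}, the cutoff is controlled through the rapid decay of $g(\sigma)=\mathcal{F}_{\tau}(\eta(n\tau))(\sigma)$, \eqref{bourg2} is a weighted Young/Peetre-type convolution argument, and \eqref{bourg3} is obtained by reducing to $0\le b'\le b$ and splitting at modulation scale $1/T$ exactly as in the continuous case. Two small points are worth repairing but are not fatal: the paper proves the decay of $g$ not by a Riemann-sum heuristic but by discrete summation by parts, showing $|d_{\tau}(\sigma)^{L}g(\sigma)|\lesssim 1$ uniformly in $\tau$ (Poisson summation, as used later in Lemma \ref{lemdisccont}, would also make your heuristic rigorous); and the weight inequality you need for \eqref{bourg2} is $\langle d_{\tau}(\sigma-\sigma_{0}-k^{2})\rangle^{b}\lesssim\langle\sigma_{0}\rangle^{|b|}\langle d_{\tau}(\sigma-k^{2})\rangle^{b}$, which follows from $|d_{\tau}(\sigma+\delta)|\le |d_{\tau}(\sigma)|+|\delta|$; the version you wrote has the wrong shape.

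The genuine gap is in \eqref{bourg4}. You assert that the resonant region near $d_{\tau}(\sigma_{0})=0$ ``produces the genuine $\langle d_{\tau}\rangle^{-1}$ gain and requires $b>1/2$ for summability.'' This cannot work as stated: after conjugation the kernel contains the factor $1/d_{\tau}(\sigma_{0})\sim 1/\sigma_{0}$, which is not locally integrable near $\sigma_{0}=0$, and no choice of $b$ helps there since $\langle d_{\tau}(\sigma_{0})\rangle\sim 1$ on $|\sigma_{0}|\le 1$. The hypothesis $b>1/2$ is in fact what closes the \emph{nonresonant} part $|\sigma_{0}|\ge 1$, through $\int\langle d_{\tau}(\sigma_{0})\rangle^{-2b}\dd\sigma_{0}<\infty$ in a Cauchy--Schwarz step. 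What saves the resonant part is a cancellation your plan does not identify: computing the kernel explicitly one finds
$$
\widetilde{F_m}(\sigma,k)=\int_{-\pi/\tau}^{\pi/\tau}\frac{\e^{i\tau\sigma_{0}}}{d_{\tau}(\sigma_{0})}\,\widetilde{f_m}(\sigma_{0},k)\left(g(\sigma)-\e^{-i\tau\sigma_{0}}g(\sigma-\sigma_{0})\right)\dd\sigma_{0},
$$
and the difference $g(\sigma)-\e^{-i\tau\sigma_{0}}g(\sigma-\sigma_{0})$ vanishes at $\sigma_{0}=0$, so Taylor expansion shows the full kernel is bounded by $\langle\sigma-\sigma_{0}\rangle^{-L}$ on $|\sigma_{0}|\le 1$: the singularity is cancelled, not summed. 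Without making this difference structure explicit, the estimate for the resonant contribution does not close. (Your side remark that the spurious periodic zeros of $d_{\tau}$ do not interfere here is correct: in the conjugated variable $\sigma_{0}$ ranges over the fundamental domain, where $d_{\tau}$ vanishes only at the origin.)
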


\begin{proof}
We begin with \eqref{bourg1}.  Let us set $u_{n}(x)= \eta(n \tau) e^{in \tau \partial_{x}^2} f(x)$. We first observe that
$$
\widetilde{u_n}(\sigma, k) = \mathcal{F}_{\tau}( \eta(n \tau))( \sigma -k^2) \widehat f(k).
$$
The function $g(\sigma)=  \mathcal{F}_{\tau}( \eta(n \tau))(\sigma)$  is fastly decreasing in the sense that
\begin{equation}\label{gfast}
\left| d_{\tau}(\sigma)^L g(\sigma) \right| \lesssim 1,
\end{equation}
where the estimate is uniform in $\tau$ and $\sigma $ for every {integer $L\ge 1$}. Indeed we have that
$$
d_{\tau}(\sigma) g(\sigma)=\tau \sum_{n \in \mathbb{Z}} { \eta((n-1) \tau)- \eta(n \tau) \over \tau} \,\e^{in \tau \sigma}
$$
and therefore
\begin{equation}\label{unif}
\left| d_{\tau}(\sigma) g(\sigma)  \right| \lesssim 1
\end{equation}
follows from the smoothness of $\eta$. We easily get the boundedness of higher powers by induction. The estimate then follows easily from Lemma \ref{lemequiv}.

Let us prove \eqref{bourg2}. We recall that
$$
\eta(n \tau)= \frac1{2\pi}\int_{-{\pi \over \tau}}^{\pi \over \tau}  g(\sigma) \,\e^{-in \tau \sigma} \dd \sigma.
$$
We deduce from \eqref{gfast} that for every $L \geq 1$, there exists $C>0$ such that for every $\tau \in (0, 1]$ and $\sigma$ with $\tau \sigma \in [-\pi, \pi]$,
\begin{equation}\label{gfast2}
|g(\sigma)| \leq {C \over  \langle \sigma \rangle^L}.
\end{equation}
This yields, by using the fast decay of $g(\sigma)$, that
$$
\|  \eta(n \tau) u_{n} \|_{X^{s, b}_{\tau}} \lesssim\int_{-{\pi \over \tau}}^{\pi \over \tau} { 1 \over \langle \sigma_{0} \rangle^L}  \|u_{n} \,\e^{-in \tau \sigma_{0}} \|_{X^{s, b}_{\tau}} \dd \sigma_{0}.
$$
Next, since $\mathcal{F}_{\tau,x} (u_{n} \e^{-in \tau \sigma_{0}})(\sigma, k)= \widetilde{u_n}(\sigma -\sigma_{0}, k)$ and
$\langle  d_{\tau}(\sigma-\sigma_{0} - k^2)  \rangle^b \lesssim  \langle \sigma_{0} \rangle^{|b|}   \langle  d_{\tau}(\sigma- k^2)  \rangle^b$,
we get that
$$
\|  \eta(n \tau) u_{n} \|_{X^{s, b}_{\tau}} \leq  \int_{-{\pi \over \tau}}^{\pi \over \tau} { 1 \over \langle \sigma_{0} \rangle^{L -|b|}}\dd \sigma_{0} \,  \|u_{n}  \|_{X^{s, b}_{\tau}} \lesssim  \|u_{n}  \|_{X^{s, b}_{\tau}}
$$
by choosing $L$ sufficiently large.

We turn to the proof of \eqref{bourg3}. We follow the steps of the  proof of the  continuous case in \cite{Tao06}. We observe that by composition it suffices to handle the cases $0 \leq b' \leq b$ or $b' \leq b \leq 0$. By duality, it suffices then to establish the inequality in the case $0 \leq b' \leq b$. By standard interpolation, we have that
$$
\left\| \eta\Bigl({ n \tau \over T}\Bigr) u_{n} \right\|_{X^{s,b'}_{\tau}} \leq  \left\| \eta\Bigl({ n \tau \over T}\Bigr) u_{n} \right\|_{X^{s,0}_{\tau}}^{ 1- {b'\over b}}  \left\| \eta\Bigl({ n \tau \over T}\Bigr) u_{n} \right\|_{X^{s,b}_{\tau}}^{b' \over b}.
$$
It thus suffices to prove that
\begin{equation}\label{trunc1}
\left\| \eta\Bigl({ n \tau \over T}\Bigr) u_{n} \right\|_{X^{s,b}_{\tau}} \lesssim  \|  u_{n} \|_{X^{s,b}_{\tau}}
\end{equation}
and  that
\begin{equation}\label{trunc2}
\left\| \eta\Bigl({ n \tau \over T} \Bigr) u_{n} \right\|_{X^{s,0}_{\tau}} \leq T^b   \left\| u_{n} \right\|_{X^{s,b}_{\tau}}
\end{equation}
 for $b<1/2$, where the estimates are uniform for $T \in (0, 1]$. We start with the first estimate. Note that we cannot use directly \eqref{bourg2} to get an estimate uniform in $T$. Let us set  $f_{n}= \e^{-in\tau \partial_{x}^2} u_{n}$ and $U_{n}= \eta(n \tau /T) f_{n}$. We want to estimate
$$
\left\| \eta\Bigl({ n \tau \over T}\Bigr) f_{n} \right\|_{H^b_{\tau}H^s}= \|U_{n}\|_{H^b_{\tau}H^s}.
$$
We have that
$$
\widetilde{U_m}(\sigma, k)=  \frac1{2\pi}\int_{-{\pi \over \tau}}^{\pi \over \tau} g_{T}(\sigma - \sigma') \widetilde{f_m}(\sigma', k)\dd \sigma',
$$
where we have set
$$
g_{T}(\sigma)= \tau \sum_{n} \eta\Bigl({n\tau \over T}\Bigr) \,\e^{in \tau \sigma}.
$$
Using the same argument as above, we observe that for every $L \geq 0$
\begin{equation}\label{gdecay}
|g_{T}(\sigma)| \lesssim_{L} { T \over \langle T \sigma \rangle^L}.
\end{equation}
In particular, this yields
\begin{equation}\label{gT}
\|g_{T}\|_{L^1(-\pi/\tau, \pi/\tau)} \lesssim 1, \quad  \| \langle \sigma \rangle^b g_{T}\|_{L^2(-\pi/\tau, \pi/\tau)} \lesssim  T^{{1 \over 2} - b}, \quad  \| \langle \sigma \rangle^b g_{T}\|_{L^1(-\pi/\tau, \pi/\tau)} \lesssim T^{-b}.
\end{equation}
We can first write by using Young's inequality for convolutions 
$$
\left\| \langle d_{\tau}(\sigma)\rangle^b \widetilde{U_m}(\sigma, k)  \right\|_{L^2} \lesssim \| g_{T}\|_{L^1} \| \langle d_{\tau} \rangle^b \widetilde{f_m}(\cdot, k)\|_{L^2}
+ \left \| \int_{-{\pi \over \tau}}^{\pi \over \tau}| \langle d_{\tau}(\sigma - \sigma') \rangle^b g_{T}(\sigma - \sigma') |\widetilde{f_m}(\sigma', k)|\dd \sigma' \right\|_{L^2}.
$$
To estimate the last integral, we split $\widetilde{f_m}(\sigma',  k) =  \widetilde{f_m}(\sigma', k) \mathrm{1}_{|\sigma' T | \leq 1} + \widetilde{f_m}(\sigma', k) \mathrm{1}_{|\sigma' T | \geq 1}$.
For the first contribution, we write
\begin{multline*}
\left \| \int_{-{\pi \over \tau}}^{\pi \over \tau}| \langle d_{\tau}(\sigma - \sigma') \rangle^b g_{T}(\sigma - \sigma') |\widetilde{f_m}(\sigma', k)| \mathrm{1}_{|\sigma' T | \leq 1}\dd \sigma' \right \|_{L^2}\\
\lesssim  \left\| \langle \sigma \rangle^b g_{T}\right\|_{L^2(-\pi/\tau, \pi/\tau)} \left\| \mathrm{1}_{|\sigma T | \leq 1} \widetilde{f_m}(\sigma,k) \right\|_{L^1} \lesssim T^{{1 \over 2} - b} T^{ b- { 1 \over 2}} \left\| \widetilde{f_m}(\cdot, k) \right\|_{L^2},
\end{multline*}
where we have used Cauchy--Schwarz to get the last estimate. For the second contribution, we use
\begin{multline*}
\left \| \int_{-{\pi \over \tau}}^{\pi \over \tau}| \langle d_{\tau}(\sigma - \sigma') \rangle^b g_{T}(\sigma - \sigma') |\widetilde{f_m}(\sigma', k)| \mathrm{1}_{|\sigma' T | \geq 1}\dd \sigma' \right \|_{L^2}\\
\lesssim  \left\| \langle \sigma \rangle^b g_{T}\right\|_{L^1(-\pi/\tau, \pi/\tau)} \left\| \mathrm{1}_{|\sigma T | \geq  1} \widetilde{f_m}(\sigma,k) \right\|_{L^2} \lesssim T^{ - b} \left\| \mathrm{1}_{ |\sigma T| \geq 1} \widetilde{f_m}(\cdot, k) \right\|_{L^2}
\lesssim   \left\| \langle \sigma \rangle^b \widetilde{f_m}(\cdot, k) \right\|_{L^2},
\end{multline*}
where we have now used that for $|\sigma T  |\geq 1$, $ T^{-b} \lesssim \langle \sigma \rangle^b$. We have thus obtained that
$$
\| \langle d_{\tau} \rangle^b \widetilde{U_m}(\cdot, k) \|_{L^2} \lesssim \| \langle d_{\tau} \rangle^b \widetilde{f_m}(\cdot, k) \|_{L^2}.
$$
It suffices to multiply by $\langle k \rangle^s$ and to take the $L^2$ norm in $k$ to get \eqref{trunc1}.

We next prove \eqref{trunc2}. Again, it suffices to prove that
$$
\| U_{n}\|_{l^2_{\tau}H^s} \lesssim T^{b} \|f_{n}\|_{H^b_{\tau}H^s}.
$$
To establish this estimate, we split $f_{n}= f_{n,1}+ f_{n,2}$ with
$$
\widetilde{f_{m,1}}(\sigma, k)= \widetilde{f_m}(\sigma,k) \mathrm{1}_{ |T \sigma| \geq 1}, \quad \widetilde{f_{m,2}}(\sigma, k)= \widetilde{f_m}(\sigma,k) \mathrm{1}_{ |T \sigma| \leq 1}.
$$
For the first part, we readily obtain from the definition of the norm that
$$
\| f_{n,1}\|_{l^2_{\tau} H^s} \lesssim T^b \|f_n\|_{H^b_{\tau}, H^s}
$$
since $1 \lesssim T^b | \sigma |^b$ on the support of integration. Since $\eta$ is bounded
$$
\left\| \eta\Bigl({n \tau \over T}\Bigr)f_{n,1}\right\|_{l^2_{\tau}H^s} \lesssim T^b \|f_{n}\|_{H^b_{\tau}H^s}.
$$
For the other part, we use that
$$
\widehat{f_{n,2}}(k)=  \int_{-{\pi \over \tau}}^{\pi \over \tau}\,\e^{-in \tau \sigma} |\sigma|^{-b}\mathrm{1}_{T| \sigma | \leq 1}|\sigma|^b \widetilde{f_m}(\sigma, k)\dd \sigma.
$$
This yields from Cauchy--Schwarz that
$$
|\widehat{f_{n,2}}(k) |^2 \lesssim T^{2b- { 1}} \left(\int_{-{\pi \over \tau}}^{\pi \over \tau} \langle d_{\tau}(\sigma)\rangle^{2b} | \widetilde{f_m}(\sigma, k)|^2 \dd \sigma\right)
$$
and therefore, for every $n$, we have
$$
\| f_{n,2}\|_{H^s}^2 =\| \langle k \rangle^s\widehat{f_{n,2}} \|_{l^2}^2 \lesssim  T^{2b- { 1}}  \|f_m\|_{l^2_{\tau}H^s}^2.
$$
This yields
$$
\left\| \eta\left({n \tau \over T}\right) f_{n, 2}\right\|_{l^2_{\tau}H^s}^2
= \tau \sum_{n}  \eta\Bigl({n \tau \over T}\Bigr)^2  \|f_{n,2}\|_{H^s}^2 \lesssim
\tau \sum_{n} \eta\Bigl({n \tau \over T}\Bigr)^2  T^{2b- { 1}}  \|f_m\|_{l^2_{\tau}H^s}^2
\lesssim  T    T^{2b- { 1}}  \|f_m\|_{l^2_{\tau}H^s}^2
$$
and we get \eqref{trunc2}, which concludes the proof of \eqref{bourg3}.

We finally prove \eqref{bourg4}. Let us set
$$
F_{n}(x)= \e^{-in \tau \partial_{x}^2} U_{n}(x), \quad f_{n}(x) = \e^{-in \tau \partial_{x}^2} u_{n}(x)
$$
so that
$$
F_{n}(x)= \eta(n\tau)  \tau \sum_{m=0}^n f_{m}.
$$
It suffices to prove that
$$
\|F_{n}\|_{H^b_{\tau}H^s} \lesssim \| f_{n}\|_{H^{b-1}_{\tau}H^s}.
$$
We shall only prove the estimate for $s=0$. The general case just follows by applying $\langle\partial_{x}\rangle^s$. Let us use again the function $g(\sigma)= \mathcal{F}_{\tau}(\eta (n \tau))(\sigma)$ as above. By direct computation, we find that
$$
\widehat{F_n}(k)= \eta (n\tau)  \tau \int_{-{\pi \over \tau}}^{\pi \over \tau} \widetilde{f_m}(\sigma_{0}, k) {1- \e^{ -i(n+1) \tau \sigma_{0}} \over 1 - \e^{-i \tau \sigma_{0}} }  \dd\sigma_{0}
$$
and therefore
$$
\widetilde{F_m}(\sigma, k)= \int_{-{\pi \over \tau}}^{\pi \over \tau} {\e^{i \tau \sigma_{0}} \over d_{\tau}(\sigma_{0})} \widetilde{f_m}(\sigma_{0}, k)
    \left(g(\sigma)- \e^{-i \tau \sigma_{0}} g(\sigma - \sigma_{0})\right)\dd \sigma_{0}.
$$
We then split
$$
\widetilde{F_m}(\sigma, k)= \widetilde{F_{m,1}}(\sigma, k)+ \widetilde{F_{m, 2}}(\sigma, k),
$$
where we replace $\widetilde{f_m}$ by $\widetilde{f_m}(\sigma_{0}, k) \mathrm{1}_{| \sigma _{0}| \geq 1}$ in $\widetilde{F_{m,1}}(\sigma, k)$, and $\widetilde{f_m}$ by $\widetilde{f_m}(\sigma_{0}, k) \mathrm{1}_{| \sigma _{0}| \leq 1}$ in $\widetilde{F_{m,2}}(\sigma, k)$. By using again that $g$ has a fast decay \eqref{gdecay}, this yields
\begin{multline*}
\langle d_{\tau}\rangle^b  | \widetilde{F_{m, 1}}(\sigma, k) |    \lesssim \langle \sigma \rangle^{b-L}   \left( \int_{-{\pi \over \tau}}^{\pi \over \tau}
   {  1 \over   \langle d_{\tau}(\sigma_{0})\rangle^{  2 b}} \dd \sigma_{0} \right)^{ 1 \over 2}
    \| \langle d_{\tau}\rangle^{b-1} \widetilde{f_m}(\cdot, k) \|_{L^2}\\
+  \int_{-{\pi \over \tau}}^{\pi \over \tau}  \langle d_{\tau}(\sigma_{0}) \rangle^{b-1}  |\widetilde{f_m}(\sigma_{0}, k)|
      \langle \sigma - \sigma_{0} \rangle^{b-L}\dd \sigma_{0}.
 \end{multline*}
Therefore, by taking the $L^2$ norm in $\sigma$, and by using Young's inequality for convolutions for the second term, we obtain since $b>1/2$,
$$
\| \langle d_{\sigma} \rangle^b \widetilde{F_{m,1}}(\cdot, k)\|_{L^2} \lesssim \| \widehat{f_m}(k)\|_{H^{b-1}_{\tau}}.
$$
To estimate $\widetilde{F_{m,2}}$, we observe that for $|\sigma_{0}| \leq 1$, we can use Taylor's formula to get that
$$
\left|  { \langle d_{\tau}(\sigma_{0}) \rangle^b \over d_{\tau}(\sigma_{0})} \left(g(\sigma)- \e^{-i \tau \sigma_{0}} g(\sigma - \sigma_{0})\right)  \right|
\lesssim { 1 \over \langle \sigma - \sigma_{0} \rangle^L}.
$$
The estimate then follows from the same arguments.

We have thus proven that
$$
\| \langle d_{\sigma} \rangle^b \widetilde{F_m}(\cdot,k)\|_{L^2} \lesssim \| \widehat{f_m}(k)\|_{H^{b-1}_{\tau}}.
$$
To conclude, it suffices to take the $L^2$ norm with respect to $k$.
\end{proof}

\begin{rem}
Note that in the proof of \eqref{bourg2},  we have also  established a useful time translation invariance property of the discrete Bourgain spaces, that is to say
\begin{equation}\label{shiftt}
\sup_{\delta \in [-4, 4]} \|\e^{i n \tau \delta } u_{n}\|_{X^{s, b}_{\tau}} \lesssim_{b} \|u_{n}\|_{X^{s, b}_{\tau}}.
\end{equation}
\end{rem}

We shall finally study in this section the discrete counterpart of Lemma \ref{lemmabourgaindur} which is crucial for the analysis of nonlinear problems.

In the discrete setting, for a sequence $(u_{n}) \in l^p(\mathbb{Z}, X)$, with $X$ normed space  we use the norm
\beq
\label{normelptau} \|u_{n}\|_{l^p_{\tau} (X)}=\left( \tau \sum_{n \in \mathbb{Z}} \|u_{n}\|_{X}^p \right)^{1 \over p}.
\eeq

\begin{lemma}\label{prodd}
For $K \geq \tau^{-{ 1 \over 2}}$,  we have
\begin{equation}\label{prodd1}
\|\Pi_{K}u_{n} \|_{l^4_{\tau}L^4} \lesssim  (K \tau^{ 1 \over 2})^{1 \over 2} \|u_{n}\|_{X^{0, {3\over 8}}_{\tau}}.
\end{equation}
\end{lemma}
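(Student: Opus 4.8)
The plan is to deduce the $L^4$ bound from a discrete bilinear $L^2$ estimate and then, after passing to the discrete space--time Fourier side, to reduce that estimate to a lattice--point count modulo $2\pi/\tau$. Since $\|\Pi_K u_n\|_{l^4_\tau L^4}^2=\big\||\Pi_K u_n|^2\big\|_{l^2_\tau L^2}$, it suffices to prove the bilinear bound
\[
\big\|\Pi_K u_n\,\overline{\Pi_K w_n}\big\|_{l^2_\tau L^2}\lesssim K\tau^{1/2}\,\|u_n\|_{X^{0,\frac38}_\tau}\|w_n\|_{X^{0,\frac38}_\tau},
\]
applied with $w_n=u_n$; its square root in $K\tau^{1/2}$ is exactly the claimed loss. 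Using the representation \eqref{norm2} of the discrete Bourgain norm from Lemma \ref{lemequiv}, I would write the space--time Fourier transform of the product as a convolution in $(\sigma,k)$ over the period $[-\pi/\tau,\pi/\tau]\times\mathbb{Z}$, inserting on each factor the weight $\langle d_\tau(\sigma-k^2)\rangle^{-3/8}$ coming from the $X^{0,\frac38}_\tau$ norms; the projections $\Pi_K$ restrict the internal spatial frequency to $|k_1|\le 2K$.

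Next I would decompose each factor dyadically in the modulation variable, i.e.\ into pieces supported where $\langle d_\tau(\sigma-k^2)\rangle\sim N_i$, and for fixed output frequency $(\sigma,k)$ apply Cauchy--Schwarz in the internal variables $(\sigma_1,k_1)$. This separates off a resonance factor, equal to the product of the measure of admissible $\sigma_1$ (which is $\lesssim\min(N_1,N_2)$ once both modulations are localized) and the number of admissible internal frequencies
\[
\mathcal N(\sigma,k):=\#\big\{k_1\in\mathbb{Z}:\ |k_1|\le 2K,\ \big|d_\tau\big(R(k_1)-\sigma\big)\big|\lesssim \max(N_1,N_2)\big\},
\]
where $R(k_1)=k_1^2-(k-k_1)^2$, which is \emph{linear} in $k_1$, for the conjugate product (and $R(k_1)=k_1^2+(k-k_1)^2$, a definite quadratic, for the non--conjugate variant, should one prefer to run the argument on $(\Pi_Ku_n)^2$). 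The remaining $\ell^2$ factor reassembles the two Bourgain norms by Plancherel and Young's inequality for convolutions.

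The heart of the matter is the counting estimate
\[
\sup_{\sigma,k}\ \#\big\{k_1\in\mathbb{Z}:\ |k_1|\le 2K,\ \big|d_\tau\big(R(k_1)-\sigma\big)\big|\lesssim 1\big\}\ \lesssim\ 1+K^2\tau,
\]
together with its scale--$N$ refinement. The mechanism is that over one period of length $2\pi/\tau$ the weight $d_\tau$ behaves like the continuous symbol, so the classical $O(1)$ representation count applies; but as $k_1$ ranges over $|k_1|\le 2K$ the resonance $R(k_1)$ sweeps an interval of length $\sim K^2\gtrsim\tau^{-1}$, hence winds $\sim K^2\tau$ times around the circle $\mathbb R/(2\pi\tau^{-1}\mathbb Z)$, and each winding contributes $O(1)$ resonant frequencies. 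This periodic wrap--around, absent in the continuous setting, is exactly the source of the loss. The hypothesis $K\ge\tau^{-1/2}$ guarantees $K^2\tau\ge1$, so that the count is $\lesssim K^2\tau$ and the isolated near--diagonal solution is absorbed; for the borderline choice $K=\tau^{-1/2}$ one recovers a count $\lesssim1$ and hence the continuous estimate of Lemma \ref{lemmabourgaindur} with no loss. Taking square roots turns $\mathcal N\lesssim K^2\tau$ into the bilinear loss $(K^2\tau)^{1/2}=K\tau^{1/2}$.

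Finally I would reassemble the dyadic pieces by summing the resonance factors against the weights $N_1^{-3/8}N_2^{-3/8}$. Here the exponent $3/8$, strictly above the critical value $1/4$, provides exactly the room needed to sum the modulation scales without a logarithmic loss, as in the continuous proof; in particular the near--vertex cluster $|k_1-k/2|\lesssim\tau^{-1/2}$, for which $R(k_1)-k^2/2$ stays inside a single period and $d_\tau$ coincides with the continuous symbol, reproduces verbatim the bounded contribution of the continuous $L^4$ estimate. The main obstacle I anticipate is precisely this counting step: establishing the bound $1+K^2\tau$ uniformly in $(\sigma,k)$ for the periodic symbol $d_\tau$, controlling the extra resonances created by its $2\pi/\tau$--periodicity (including the degenerate output frequency $k=0$, handled directly by Cauchy--Schwarz in $k_1$, and, in the quadratic case, the slowly varying vertex), while simultaneously retaining enough decay in the modulation variables $N_1,N_2$ to carry out the dyadic summation.
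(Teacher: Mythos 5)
Your overall architecture is the same as the paper's (which adapts the Tao--Tzvetkov proof of the continuous $L^4$ estimate): square the $l^4_\tau L^4$ norm, pass to the discrete space--time Fourier side via \eqref{norm2}, decompose dyadically in the modulation, apply Cauchy--Schwarz to reduce matters to a counting bound, observe that the $2\pi/\tau$--periodicity of $d_\tau$ makes the resonance wrap around $O(\tau K^2)$ times (this is indeed the source of the loss $K\tau^{1/2}$ and the reason $K=\tau^{-1/2}$ is lossless), and resum using $3/8>1/4$. However, your main line commits to the \emph{conjugate} bilinear form $\Pi_K u_n\,\overline{\Pi_K w_n}$, whose resonance $R(k_1)=k_1^2-(k-k_1)^2=2kk_1-k^2$ is linear in $k_1$, and this choice breaks the argument. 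First, your claimed uniform count $\lesssim 1+K^2\tau$ is false: at $k=0$ the resonance vanishes identically, so the count is the full range $\sim K$, which exceeds $K^2\tau$ throughout the regime $\tau^{-1/2}\le K<\tau^{-1}$; your proposed fix (Cauchy--Schwarz in $k_1$) only treats this single mode. Second, and fatally, for small nonzero output frequencies the scale--$N$ refinement degenerates: when $0<|k|\ll N_{\max}$ the number of admissible $k_1$ per period is $\sim N_{\max}/|k|$ rather than $\sim N_{\max}^{1/2}$, so the resonance factor is $M\lesssim N_{\min}N_{\max}\,(1+|k|K\tau)/|k|$, and after taking square roots and dividing by the weights $N_{\min}^{3/8}N_{\max}^{3/8}$ you are left with a factor $N_{\min}^{1/8}N_{\max}^{1/8}$ that cannot be summed over the dyadic modulation scales. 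The conjugate form only closes for $b\ge 1/2$; the exponent $3/8$ is simply not reachable this way.

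The repair is exactly the variant you relegate to a parenthesis, and it is what the paper does: work with the non-conjugate square $(\Pi_K u_n)^2$, i.e.\ use $\|\Pi_Ku_n\|_{l^4_\tau L^4}^2=\|(\Pi_Ku_n)^2\|_{l^2_\tau L^2}$. Then the resonance is the definite quadratic $k_1^2+k_2^2=\tfrac12\bigl(k^2+(k_1-k_2)^2\bigr)$ with $k_1+k_2=k$, so at modulation scale $N_{\max}=2^q$ the quantity $(k_1-k_2)^2$ is confined to a union of $O(\tau K^2)$ intervals (one per period of $d_\tau$) and $k_1-k_2$ to intervals of length $O(2^{q/2})$, giving $M_{p,q}\lesssim \tau K^2\,2^{q/2}\,2^{p}=\tau K^2\,2^{(p-q)/4}2^{3p/4}2^{3q/4}$, uniformly in $(\sigma,k)$ and with no degenerate frequencies to except. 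Taking square roots yields the off-diagonal gain $2^{(p-q)/8}$ and the factor $(K^2\tau)^{1/2}=K\tau^{1/2}$, and Young's inequality for sequences finishes the proof. If you rewrite your argument around the quadratic resonance from the start, it coincides with the paper's proof in section \ref{sectiontechnic}.
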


The above inequality is  an  important result  of the paper, but the understanding of its  proof that requires tools not yet introduced is not necessary to
continue reading the paper. For the convenience of the reader, we thus postpone it to section \ref{sectiontechnic}.

By duality, we also get  from \eqref{prodd1} that
\begin{equation}\label{duald}
\| \Pi_{K} u_{n}\|_{X^{0, -{3 \over 8}}_{\tau}} \lesssim (K \tau^{1 \over 2})^{1 \over 2} \|u_{n}\|_{l^{4\over 3}_{\tau}L^{4\over 3}}.
\end{equation}

As a consequence, we obtain the following crucial product estimates for sequences $u_n$, $v_n$, and $w_n$.

\begin{cor}\label{corprod}
We have the following product estimate:
\begin{equation}\label{prodd3}
\| \Pi_{K}( \Pi_{ \tau^{- {1 \over 2 } }} u_{n} \Pi_{ \tau^{- {1 \over 2 } }} v_{n} \Pi_{K} w_{n} ) \|_{X^{0, - { 3 \over 8}}_{\tau}}
\lesssim   (K \tau^{1 \over 2}) \|u_{n}\|_{X^{0, {3 \over 8}}_{\tau}}  \|v_{n}\|_{X^{0, {3 \over 8}}_{\tau}} \|w_{n}\|_{X^{0, {3 \over 8}}_{\tau}}.
\end{equation}
Moreover, for any $s_{1}>1/4$,  we have the estimates
\begin{align}\label{prodd3ter}
\| \Pi_{K}( \Pi_{ \tau^{- {1 \over 2 } }} u_{n} \Pi_{ \tau^{- {1 \over 2 } }} v_{n} \Pi_{K} w_{n} ) \|_{X^{0, - { 3 \over 8}}_{\tau}} &\lesssim (K \tau^{1 \over 2})^{1 \over 2}
\|u_{n}\|_{X^{0, {3 \over 8}}_{\tau}}  \|v_{n}\|_{X^{0, {3 \over 8}}_{\tau}} \|w_{n}\|_{l^4_{\tau}H^{s_{1}}},\\
\label{prodd3bis}
\| \Pi_{K}( \Pi_{ \tau^{- {1 \over 2 } }} u_{n} \Pi_{ \tau^{- {1 \over 2 } }} v_{n} \Pi_{K} w_{n} ) \|_{X^{0, - { 3 \over 8}}_{\tau}}&\lesssim
\|u_{n}\|_{X^{s_{1}, {3 \over 8}}_{\tau}}  \|v_{n}\|_{X^{s_{1}, {3 \over 8}}_{\tau}} \|w_{n}\|_{l^\infty_{\tau}L^2},
\end{align}
and for $s_{2}>1/2$, we have
\begin{align}\label{prodd3bisbis}
\| \Pi_{K}( \Pi_{ \tau^{- {1 \over 2 } }} u_{n} \Pi_{ \tau^{- {1 \over 2 } }} v_{n} \Pi_{K} w_{n} ) \|_{X^{0, - { 3 \over 8}}_{\tau}} &  \lesssim \|u_{n}\|_{X^{0, {3 \over 8}}_{\tau}}  \|\Pi_{ \tau^{- {1 \over 2 } }} v_{n}\|_{l^4_{\tau}H^{s_{1}}} \|w_{n}\|_{l^\infty_{\tau}H^{s_{2}}}, \\
\label{prodd3bisbis2}
\| \Pi_{K}( \Pi_{ \tau^{- {1 \over 2 } }} u_{n} \Pi_{ \tau^{- {1 \over 2 } }} v_{n} \Pi_{K} w_{n} ) \|_{X^{0, - { 3 \over 8}}_{\tau}} &
\lesssim  \|u_{n}\|_{X^{0, {3 \over 8}}_{\tau}}  \| v_{n}\|_{X^{0, {3 \over 8}}_{\tau}} \|w_{n}\|_{l^\infty_{\tau}H^{s_{2}}}.
\end{align}

\end{cor}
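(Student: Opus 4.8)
The plan is to obtain all five bounds from the discrete $L^4$ estimate \eqref{prodd1}, its dual \eqref{duald}, and Hölder's inequality, the only real choice being how the three factors $\Pi_{\taul}u_n$, $\Pi_{\taul}v_n$, $\Pi_K w_n$ are distributed. Two preliminary reductions organize the argument. First, applying \eqref{prodd1} at the low-frequency scale $K=\taul$ is free, since the prefactor $(\taul\,\tau^{1/2})^{1/2}$ equals $1$; inserting $\langle\partial_x\rangle^{s}$ this records $\|\Pi_{\taul}\langle\partial_x\rangle^{s}f_n\|_{l^4_\tau L^4}\lesssim \|f_n\|_{X^{s,3/8}_\tau}$ for $s\ge 0$, so that only the high-frequency factor $\Pi_K w_n$ can ever cost a genuine power of $K\tau^{1/2}$. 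Second, because the multiplier $\langle d_\tau(\sigma-k^2)\rangle^{-3/8}\le 1$ and the free evolution is unitary on $L^2$, the characterization \eqref{norm2} gives the cost-free embedding $\|F_n\|_{X^{0,-3/8}_\tau}\lesssim \|F_n\|_{l^2_\tau L^2}$, which I will use for the three estimates carrying no power of $K\tau^{1/2}$. Throughout I freely use the Sobolev embeddings $H^{s_1}(\mathbb T)\hookrightarrow L^4$ and $W^{s_1,4}(\mathbb T)\hookrightarrow L^\infty$ for $s_1>1/4$, and $H^{s_2}(\mathbb T)\hookrightarrow L^\infty$ for $s_2>1/2$, together with the boundedness of $\Pi_K$ and $\Pi_{\taul}$.

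For the two estimates \eqref{prodd3} and \eqref{prodd3ter} that lose a power of $K\tau^{1/2}$, I would start from the dual bound \eqref{duald}, which already produces one factor $(K\tau^{1/2})^{1/2}$ and reduces matters to estimating $\|\Pi_{\taul}u_n\,\Pi_{\taul}v_n\,\Pi_K w_n\|_{l^{4/3}_\tau L^{4/3}}$. Splitting $L^{4/3}=L^4\cdot L^4\cdot L^4$ in space and $l^{4/3}=l^4\cdot l^4\cdot l^4$ in time, Hölder turns this into a product of three $l^4_\tau L^4$ norms. For \eqref{prodd3} I bound the two low-frequency factors by \eqref{prodd1} at scale $\taul$ (free) and the factor $\Pi_K w_n$ by \eqref{prodd1} at scale $K$, which supplies the second $(K\tau^{1/2})^{1/2}$ and hence the full $K\tau^{1/2}$. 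For \eqref{prodd3ter} I instead control $\|\Pi_K w_n\|_{l^4_\tau L^4}\lesssim \|w_n\|_{l^4_\tau H^{s_1}}$ by the embedding $H^{s_1}\hookrightarrow L^4$, incurring no further power, so only the single $(K\tau^{1/2})^{1/2}$ coming from \eqref{duald} survives.

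For \eqref{prodd3bis}, \eqref{prodd3bisbis} and \eqref{prodd3bisbis2}, which carry no power of $K\tau^{1/2}$, I would instead invoke the cost-free embedding $\|F_n\|_{X^{0,-3/8}_\tau}\lesssim \|F_n\|_{l^2_\tau L^2}$, drop $\Pi_K$ (bounded on $L^2$), and estimate the product in $l^2_\tau L^2$ by Hölder with the time split $l^2=l^4\cdot l^4\cdot l^\infty$. The space split is dictated by the norm in which $w_n$ lives. In \eqref{prodd3bisbis} and \eqref{prodd3bisbis2}, $w_n\in l^\infty_\tau H^{s_2}\hookrightarrow l^\infty_\tau L^\infty$ with $s_2>1/2$, so I use $L^2=L^4\cdot L^4\cdot L^\infty$, handling the two $L^4$ factors by \eqref{prodd1} at scale $\taul$ (both factors for \eqref{prodd3bisbis2}; the $u$-factor by \eqref{prodd1} and the $v$-factor by $H^{s_1}\hookrightarrow L^4$ for \eqref{prodd3bisbis}). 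In \eqref{prodd3bis}, $w_n$ only sits in $l^\infty_\tau L^2$, which forces the space split $L^2=L^\infty\cdot L^\infty\cdot L^2$; the two low-frequency factors must then be placed in $l^4_\tau L^\infty$, and I would control them by $\|\Pi_{\taul}u_n\|_{l^4_\tau L^\infty}\lesssim \|\Pi_{\taul}\langle\partial_x\rangle^{s_1}u_n\|_{l^4_\tau L^4}\lesssim \|u_n\|_{X^{s_1,3/8}_\tau}$, combining $W^{s_1,4}\hookrightarrow L^\infty$ with the free $L^4$ estimate at scale $\taul$.

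The one genuinely non-routine point is this last $l^4_\tau L^\infty_x$ control in \eqref{prodd3bis}: since $w_n$ only lives in $L^2$ in space, both remaining factors are forced into $L^\infty$ in space, and an $H^{s}\hookrightarrow L^\infty$ embedding would demand $s>1/2$ and thereby destroy the threshold $s_1>1/4$. The resolution is to keep the low-frequency factors in $L^4_x$ and invoke the fractional embedding $W^{s_1,4}(\mathbb T)\hookrightarrow L^\infty(\mathbb T)$, whose sharp exponent is exactly $s_1>1/4$; this is precisely what allows \eqref{prodd3bis} to hold with no loss in $K\tau^{1/2}$ and at the same regularity threshold as the underlying $L^4$ estimate. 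All the remaining work consists of checking that each Hölder triple sums to the correct value in both the time and the space indices, which is routine.
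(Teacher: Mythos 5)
Your proposal is correct and follows essentially the same route as the paper: \eqref{prodd3} and \eqref{prodd3ter} via the dual estimate \eqref{duald} plus Hölder in $l^{4}_\tau L^4$, and the three loss-free bounds via the trivial embedding $X^{0,-3/8}_\tau\supset l^2_\tau L^2$ plus Hölder, with the low-frequency factors controlled through \eqref{prodd1} at scale $\tau^{-1/2}$ and the Sobolev embeddings $H^{s_1}\subset L^4$, $W^{s_1,4}\subset L^\infty$, $H^{s_2}\subset L^\infty$. The key observation you isolate for \eqref{prodd3bis} — placing the low-frequency factors in $l^4_\tau L^\infty$ via $W^{s_1,4}\hookrightarrow L^\infty$ to preserve the threshold $s_1>1/4$ — is exactly the device used in the paper.
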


 Note that \eqref{prodd3bis} is of particular interest if the two lower frequency factors have at least $1/4$ regularity. Then we do not need the factor $K \tau^{1/2}$ which is large if $\alpha >1$ (recall that $K=\tau^{-\alpha/2}$). This will be useful to prove the stability of the scheme for $\alpha >1$. The estimate \eqref{prodd3ter} will turn out to be useful to optimize the convergence rate of the scheme when $s_{0}$ is large enough.

\begin{proof}
We start with proving \eqref{prodd3}. We first obtain from the estimate \eqref{duald} that
$$
\| \Pi_{K}( \Pi_{ \tau^{- {1 \over 2 } }} u_{n} \Pi_{ \tau^{- {1 \over 2 } }} v_{n} \Pi_{K} w_{n} ) \|_{X_\tau^{0, - { 3 \over 8}}} \lesssim  (K \tau^{1 \over 2})^{1 \over 2}  \| \Pi_{K}( \Pi_{ \tau^{- {1 \over 2 } }} u_{n} \Pi_{ \tau^{- {1 \over 2 } }} v_{n} \Pi_{K} w_{n} ) \|_{l^{4\over 3}_{\tau}L^{4\over 3}}.
$$
From the continuity of $\Pi_{K}$ on $L^p$ and the H\"{o}lder inequality, we next get that
\begin{equation}\label{eq:prod-inter}
\| \Pi_{K}( \Pi_{ \tau^{- {1 \over 2 } }} u_{n} \Pi_{ \tau^{- {1 \over 2 } }} v_{n} \Pi_{K} w_{n} ) \|_{X_\tau^{0, - { 3 \over 8}}} \lesssim (K \tau^{1 \over 2})^{1 \over 2}   \| \Pi_{ \tau^{- {1 \over 2 } } }u_{n} \|_{l^4_{\tau}L^4}  \| \Pi_{ \tau^{- {1 \over 2 } } }v_{n} \|_{l^4_{\tau}L^4} \| \Pi_{K} w_{n} \|_{l^4_{\tau}L^4}.
\end{equation}
By using  again \eqref{prodd1}, we thus find
$$
\| \Pi_{K}( \Pi_{ \tau^{- {1 \over 2 } }} u_{n} \Pi_{ \tau^{- {1 \over 2 } }} v_{n} \Pi_{K} w_{n} ) \|_{X_\tau^{0, - { 3 \over 8}}} \lesssim   (K \tau^{1 \over 2}) \|u_{n}\|_{X^{0, {3 \over 8}}_{\tau}}  \|v_{n}\|_{X^{0, {3 \over 8}}_{\tau}}\|w_{n}\|_{X^{0, {3 \over 8}}_{\tau}}.
$$
This proves \eqref{prodd3}.

For the proof of \eqref{prodd3ter}, we use again \eqref{eq:prod-inter}. However, we only estimate $\| \Pi_{ \tau^{- {1 \over 2 } } }u_{n} \|_{l^4_{\tau}L^4}$ and $\| \Pi_{ \tau^{- {1 \over 2 } } }v_{n} \|_{l^4_{\tau}L^4}$  with the help of \eqref{prodd1}. For the last term, we use the Sobolev embedding $H^{s_1}\subset L^4$ to get that
$$
\| \Pi_{K} w_{n} \|_{l^4_{\tau}L^4} \lesssim  \| \Pi_{K} w_{n} \|_{l^4_{\tau}H^{s_{1}}}.$$

To get \eqref{prodd3bis}, we just use that
\beq
\label{bisbis1}
\| \Pi_{K}( \Pi_{ \tau^{- {1 \over 2 } }} u_{n} \Pi_{ \tau^{- {1 \over 2 } }} v_{n} \Pi_{K} w_{n} ) \|_{X_\tau^{0, - { 3 \over 8}}}
\lesssim  \|  \Pi_{ \tau^{- {1 \over 2 } }} u_{n} \Pi_{ \tau^{- {1 \over 2 } }} v_{n} \Pi_{K} w_{n}  \|_{X^{0, 0}_\tau}
\eeq
and employ H\"{o}lder's inequality to get
$$
\|  \Pi_{ \tau^{- {1 \over 2 } }} u_{n} \Pi_{ \tau^{- {1 \over 2 } }} v_{n} \Pi_{K} w_{n}  \|_{X_\tau^{0, - { 3 \over 8}}}
\lesssim   \|  \Pi_{ \tau^{- {1 \over 2 } }} u_{n}\|_{l^4_{\tau}L^\infty}    \|  \Pi_{ \tau^{- {1 \over 2 } }} v_{n}\|_{l^4_{\tau}L^\infty} \|\Pi_{K} w_{n}  \|_{l^\infty_{\tau}L^2}.
$$
We then use \eqref{sobbourg} to write
$$
\|\Pi_{K} w_{n}  \|_{l^\infty_{\tau}L^2} \lesssim   \|w_{n}  \|_{X^{0,b}_{\tau}}.
$$
and the  Sobolev embedding $W^{s_1,4}\subset L^\infty$ and  \eqref{prodd1} to obtain
$$
\|  \Pi_{ \tau^{- {1 \over 2 } }} u_{n}\|_{l^4_{\tau}L^\infty}  \lesssim \| u_{n}\|_{X^{s_{1}, {3 \over 8}}_{\tau}}, \quad
\|  \Pi_{ \tau^{- {1 \over 2 } }} v_{n}\|_{l^4_{\tau}L^\infty}  \lesssim \| v_{n}\|_{X^{s_{1}, {3 \over 8}}_{\tau}}.
$$
This concludes the proof of \eqref{prodd3bis}.

For \eqref{prodd3bisbis}, \eqref{prodd3bisbis2},  we just use again \eqref{bisbis1}, and then that
$$ \|  \Pi_{ \tau^{- {1 \over 2 } }} u_{n} \Pi_{ \tau^{- {1 \over 2 } }} v_{n} \Pi_{K} w_{n}  \|_{X^{0, 0}}
  \lesssim  \|\Pi_{ \tau^{- {1 \over 2 } }} u_{n} \|_{l^4_{\tau}L^4}  \|\Pi_{ \tau^{- {1 \over 2 } }} v_{n} \|_{l^4_{\tau}L^4}
   \| \Pi_{K} w_{n}\|_{l^\infty_{\tau}L^\infty}.
$$
To conclude, we use again Lemma \ref{prodd} and the Sobolev embedding $H^{s_{2}} \subset L^\infty$ or Lemma \ref{prodd} and  the Sobolev embeddings $H^{s_{2}} \subset L^\infty$, $H^{s_{1}} \subset L^4$ .
\end{proof}

\begin{rem}
Another version intermediate between \eqref{prodd3ter} and \eqref{prodd3bis} will be also useful. We have that
\beq\label{pourlafin}
\| \Pi_{K}( \Pi_{ \tau^{- {1 \over 2 } }} u_{n} \Pi_{ \tau^{- {1 \over 2 } }} v_{n} \Pi_{K} w_{n} ) \|_{X^{0, - { 3 \over 8}}_{\tau}}\lesssim   (K \tau^{ 1 \over 2})^{ 1 \over 2}  \|u_{n}\|_{X_{\tau}^{0,{3 \over 8}}}
\|v_{n}\|_{X_{\tau}^{s_{1}, {3\over 8}}}\|w_{n}\|_{l^4_{\tau} L^2}
\eeq
with $s_{1}>1/4$. Indeed, we first use \eqref{duald} to get that
$$
\| \Pi_{K}( \Pi_{ \tau^{- {1 \over 2 } }} u_{n} \Pi_{ \tau^{- {1 \over 2 } }} v_{n} \Pi_{K} w_{n} ) \|_{X^{0, - { 3 \over 8}}_{\tau}}
\leq   (K \tau^{ 1 \over 2})^{ 1 \over 2} \| \Pi_{K}( \Pi_{ \tau^{- {1 \over 2 } }} u_{n} \Pi_{ \tau^{- {1 \over 2 } }} v_{n} \Pi_{K} w_{n} ) \|_{l^{4 \over 3}_{\tau}L^{4 \over 3}}
$$
and we employ H\"{o}lder's inequality to get
$$
\| \Pi_{K}( \Pi_{ \tau^{- {1 \over 2 } }} u_{n} \Pi_{ \tau^{- {1 \over 2 } }} v_{n} \Pi_{K} w_{n} ) \|_{l^{4 \over 3}_{\tau}L^{4 \over 3}} \leq
\|   \Pi_{K} w_{n}  \|_{l^4_{\tau}L^2} \|\Pi_{ \tau^{- {1 \over 2 } }} v_{n}\|_{l^4_{\tau}L^\infty} \|\Pi_{ \tau^{- {1 \over 2 } }} u_{n}\|_{l^4_{\tau}L^4}.
$$
We conclude by using the Sobolev embedding $W^{s_{1}, 4}\subset L^\infty $ and \eqref{prodd1}.
\end{rem}

\section{Estimates of the exact solution in discrete Bourgain spaces}
\label{section4}

In this section, we shall prove that the sequence $u^K(t_{n})$ is an element of $X^{s, b}_{\tau}$ for suitable $s$. It will be convenient to use the following general lemma.

\begin{lemma}\label{lemdisccont}
For any $b \geq 0$,  $b'>1/2 $ and $ s \in \mathbb{R}$,  let us consider a sequence of functions $(u_n(x))_{n\in \mathbb Z}$ of the form  $u_{n}(x)= u(n\tau,x)$. Then
$$
\|u_{n}\|_{X^{s, b}_{\tau}} \lesssim_{b} \|u\|_{X^{s,b+b'}}.
$$
\end{lemma}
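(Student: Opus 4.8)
The plan is to pass to the equivalent norm \eqref{norm2} and to relate the discrete space--time Fourier transform $\widetilde{u_n}$ of the sampled sequence to the continuous transform $\tilde u$ by Poisson summation (aliasing). Writing $\widehat{u_m}(k)=\widehat{u(m\tau,\cdot)}(k)$ and inserting the time Fourier inversion, the normalizing factor $\tau$ built into the definition \eqref{discretefourier} combines with $\sum_m \e^{im\tau(\sigma+\sigma')}=\tfrac{2\pi}\tau\sum_j\delta(\sigma+\sigma'-2\pi j/\tau)$ so that the $\tau$'s cancel and one obtains, up to a $\tau$-independent constant, the aliasing identity
\[
\widetilde{u_n}(\sigma,k)=c\sum_{j\in\mathbb Z}\tilde u\!\left(\tfrac{2\pi j}\tau-\sigma,\,k\right),\qquad |\sigma|\le \pi/\tau .
\]
Since $b+b'>1/2$ yields the embedding $X^{s,b+b'}\subset \mathcal C(\mathbb R,H^s)$ (cf.\ \eqref{bourg4an}), the samples $u(n\tau,\cdot)$ are well defined. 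To justify the interchange of the sum and the time transform I would first prove the inequality for a dense class (e.g.\ $\tilde u$ smooth, compactly supported in $\sigma$, supported on finitely many $k$), where the aliasing sum converges absolutely, and then extend to general $u$ by density together with Fatou's lemma.

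The next step is a weight comparison. Set $\sigma'_j=\tfrac{2\pi j}\tau-\sigma$. Because $d_\tau$ is $2\pi/\tau$-periodic and $|d_\tau|$ is even, we have $|d_\tau(\sigma-k^2)|=|d_\tau(\sigma'_j+k^2)|$ for every $j$; and since $|d_\tau(\theta)|=\tfrac2\tau|\sin(\tau\theta/2)|\le|\theta|$, this gives $\langle d_\tau(\sigma-k^2)\rangle\le\langle \sigma'_j+k^2\rangle$ for each $j$. Here $b\ge0$ is exactly what is needed to keep this monotonicity after raising to the power $b$, so the weight may be distributed inside the aliasing sum:
\[
\langle d_\tau(\sigma-k^2)\rangle^b\,|\widetilde{u_n}(\sigma,k)|\lesssim \sum_{j}\langle \sigma'_j+k^2\rangle^{b}\,|\tilde u(\sigma'_j,k)| .
\]

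Now I split $\langle \sigma'_j+k^2\rangle^{b}=\langle \sigma'_j+k^2\rangle^{-b'}\,\langle \sigma'_j+k^2\rangle^{b+b'}$ and apply Cauchy--Schwarz in $j$. The points $\sigma'_j+k^2=\tfrac{2\pi j}\tau+(k^2-\sigma)$ are spaced $2\pi/\tau\ge 2\pi$ apart since $\tau\le1$, and $2b'>1$, so $\sum_{j}\langle \sigma'_j+k^2\rangle^{-2b'}\lesssim_{b'}1$ uniformly in $\sigma,k$ and in $\tau\in(0,1]$; this summability is precisely what forces the hypothesis $b'>1/2$. We are left with
\[
\bigl(\langle d_\tau(\sigma-k^2)\rangle^b\,|\widetilde{u_n}(\sigma,k)|\bigr)^2\lesssim_{b'}\sum_{j}\langle \sigma'_j+k^2\rangle^{2(b+b')}\,|\tilde u(\sigma'_j,k)|^2 .
\]
Integrating over $\sigma\in[-\pi/\tau,\pi/\tau]$ and summing over $k$ with the weight $\langle k\rangle^{2s}$, the final ingredient is an \emph{unfolding}: as $(\sigma,j)$ ranges over $[-\pi/\tau,\pi/\tau]\times\mathbb Z$, the variable $\sigma'_j$ sweeps out $\mathbb R$ exactly once (the intervals $[(2j-1)\pi/\tau,(2j+1)\pi/\tau]$ tile $\mathbb R$), so $\int_{-\pi/\tau}^{\pi/\tau}\sum_j(\cdots)\,\dd\sigma=\int_{\mathbb R}(\cdots)\,\dd\sigma'$. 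This turns the right-hand side into exactly $\|u\|_{X^{s,b+b'}}^2$ (recall \eqref{defnormeBourgain}), which, combined with \eqref{norm2}, yields the claim.

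I expect the genuine obstacle to be the first step: setting up the aliasing identity and rigorously justifying the exchange of summation and integration at the regularity $X^{s,b+b'}$ (handled by the density/Fatou argument sketched above). Once that identity is in hand, the remaining steps — the periodicity/evenness weight comparison, the Cauchy--Schwarz with the $b'>1/2$ summability, and the tiling of $\mathbb R$ — are elementary bookkeeping, and the careful tracking of the $\tau$-prefactor in \eqref{discretefourier} is what guarantees that all constants are uniform in $\tau\in(0,1]$.
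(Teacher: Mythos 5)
Your proof is correct and follows essentially the same route as the paper's: Poisson summation (aliasing) for the sampled sequence, the weight comparison via the $2\pi/\tau$-periodicity of $d_\tau$ together with $|d_\tau(\theta)|\lesssim\langle\theta\rangle$, Cauchy--Schwarz over the aliasing sum using $2b'>1$, and unfolding the $\sigma$-integral over the tiling of $\mathbb R$. The only (harmless) differences are that the paper first conjugates by the free flow, i.e.\ sets $f=\e^{-it\partial_x^2}u$ so that the weight is simply $\langle d_\tau(\sigma)\rangle^b$ rather than $\langle d_\tau(\sigma-k^2)\rangle^b$, and that you track the reflection $\sigma\mapsto -\sigma$ coming from the opposite sign conventions of the discrete and continuous transforms, which the evenness of $|d_\tau|$ renders immaterial.
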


\begin{proof}
By setting $f=\e^{-it \partial_{x}^2} u$ and $f_{n}(x)= f(n\tau, x)$, it suffices to prove that
$$
\|f_{n}\|_{H^b_{\tau}L^2} \lesssim \|f\|_{H^{b+b'}L^2},
$$
the extension to general $s$ being straightforward. Since we have by definition that
$$
\widetilde{f_m}(\sigma, k)= \tau \sum_{n\in\mathbb Z} \widetilde f(n\tau, k) \,\e^{in \tau \sigma},
$$
we have by Poisson's summation formula that
$$
\widetilde{f_n}(\sigma, k)= \sum_{m \in \mathbb{Z}} \widetilde f\Bigl( \sigma + { 2 \pi \over \tau} m, k\Bigr).
$$
Therefore,
$$
\langle d_{\tau}(\sigma) \rangle^b \widetilde{f_n}(\sigma, k) = \sum_{m \in \mathbb{Z}} \left\langle d_{\tau}\Bigl(\sigma + { 2 \pi \over \tau} m\Bigr) \right\rangle^b \widetilde f\Bigl( \sigma + { 2 \pi \over \tau} m, k\Bigr),
$$
since $d_{\tau}$ is also a $2\pi/\tau$ periodic function. Since, we always have that $|d_{\tau}(\sigma)| \lesssim \langle \sigma \rangle$, this yields  by Cauchy--Schwarz,
\begin{multline*}
|\langle d_{\tau}(\sigma) \rangle^b \widetilde{f_n}(\sigma, k)|^2 \lesssim \sum_{\mu} { 1 \over \left\langle \sigma +  { 2 \pi \over \tau} \mu \right \rangle^{2b'} }
\sum_{m \in \mathbb{Z}} \left\langle \sigma + { 2 \pi \over \tau} m \right\rangle^{2b + 2b'}\left| \widetilde f\Bigl( \sigma + { 2 \pi \over \tau} m, k\Bigr)\right|^2 \\
\lesssim   \sum_{m \in \mathbb{Z}} \left\langle \sigma + { 2 \pi \over \tau} m \right\rangle^{2b + 2b'}\left| \widetilde f\Bigl( \sigma + { 2 \pi \over \tau} m, k\Bigr)\right|^2
\end{multline*}
since $2b'>1$. By integrating with respect to $\sigma$, we obtain that
$$
\|\langle d_{\tau} \rangle^b \widetilde{f_n}(\cdot, k)\|_{L^2(-\pi/\tau, \pi/\tau)} \lesssim \| \langle \sigma \rangle^{b+ b'} \widetilde f(\cdot, k) \|_{L^2(\mathbb{R})}^2.
$$
We finish the proof by summing over $k$.
\end{proof}

As a consequence of the previous lemma, we obtain the following result.

\begin{proposition}\label{propunifdisc}
Let $u^K$ be the solution of \eqref{nls} and define the sequence  $u^K_{n}(x)= u^K(n\tau + t', x)$. Assume that $u_0 \in H^{s_{0}}$, $s_{0}> 0 $. Then, for every   $s_{1}$,  such that $0 \leq s_{1} <  s_{0}$,  we have that
$$
\sup_{t' \in [0, 4 \tau]} \|\eta(n\tau) u^K_{n}\|_{X^{s_{1}, {3 \over 8}}_{\tau}} \leq C_{T}.
$$
\end{proposition}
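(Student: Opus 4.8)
The plan is to combine the sampling estimate of Lemma~\ref{lemdisccont} with the extra $b$-regularity of $u^K$ furnished by Corollary~\ref{corsb}. Lemma~\ref{lemdisccont} tells us that passing from a continuous-in-time function $u$ to the sampled sequence $u_n=u(n\tau)$ costs a factor $b'>1/2$ of regularity in the $b$-index, namely $\|u_n\|_{X^{s,b}_{\tau}}\lesssim\|u\|_{X^{s,b+b'}}$. Since the target space is $X^{s_1,3/8}_{\tau}$, I would set $b=3/8$ and choose $b'\in(1/2,5/8]$, so that $\beta:=b+b'=3/8+b'$ lies in $(7/8,1]\subset(5/8,1]$. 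This is exactly the admissible range of the $b$-parameter in Corollary~\ref{corsb}, which therefore yields $\|u^K\|_{X^{s_1,\beta}(I)}\le M_T$ for every $s_1<s_0$, uniformly in $\tau$, on any bounded time interval $I$ (for the negative part of the interval one invokes the time-reversibility of \eqref{nls}).

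Next I would absorb the discrete cutoff $\eta(n\tau)$ and the shift $t'$. The key observation is that $\eta(n\tau)\,u^K_n(x)=g(n\tau,x)$, where $g(t,x):=\eta(t)\,u^K(t+t',x)$ is genuinely the sampling of one continuous-in-time function, so that Lemma~\ref{lemdisccont} applies to $g$ directly. It then remains to bound $\|g\|_{X^{s_1,\beta}}$ uniformly in $t'\in[0,4\tau]$ and in $\tau$. Because $\eta$ is supported in $[-2,2]$ and $t'\le 4\tau\le 4$, the times $t+t'$ occurring in $g$ stay in a fixed bounded interval $I$; picking a global extension $\overline{u}^K$ of $u^K|_I$ with $\|\overline{u}^K\|_{X^{s_1,\beta}}\le 2M_T$ (which exists by the very definition of the localized norm), one checks that $g=\eta\cdot\overline{u}^K(\cdot+t')$ identically, since the two sides agree on $\mathrm{supp}\,\eta$ and vanish elsewhere.

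To finish I would use the time-translation invariance of the continuous Bourgain norm: the norm \eqref{defnormeBourgain} sees only $|\widetilde{u}|$ and $|\sigma+k^2|$, and a time shift multiplies $\widetilde{u}$ by a unimodular phase, whence $\|\overline{u}^K(\cdot+t')\|_{X^{s_1,\beta}}=\|\overline{u}^K\|_{X^{s_1,\beta}}$. The continuous multiplier estimate \eqref{bourg2an} of Lemma~\ref{lemmabourgainfacile} then absorbs the cutoff, giving $\|g\|_{X^{s_1,\beta}}=\|\eta\cdot\overline{u}^K(\cdot+t')\|_{X^{s_1,\beta}}\lesssim_{\eta} M_T$, uniformly in $t'$ and $\tau$. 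Applying Lemma~\ref{lemdisccont} with $s=s_1$, $b=3/8$ and $b'=\beta-3/8$ produces $\|\eta(n\tau)u^K_n\|_{X^{s_1,3/8}_{\tau}}\lesssim\|g\|_{X^{s_1,\beta}}\lesssim M_T$, and taking the supremum over $t'\in[0,4\tau]$ concludes.

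I expect the main obstacle to be purely organizational rather than analytical: one must verify that the localized bound of Corollary~\ref{corsb} covers the full time window $\mathrm{supp}\,\eta+[0,4\tau]$ (including negative times) with a constant uniform in $\tau$, and that the extension, translation and multiplier steps chain together without the final constant degenerating as $\tau\to0$. The two substantive inputs --- the gain in the $b$-index (Corollary~\ref{corsb}) and the sampling inequality (Lemma~\ref{lemdisccont}) --- are already in hand, so no genuinely new estimate is required.
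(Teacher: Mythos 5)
Your proposal is correct and follows exactly the paper's route: the paper's proof is the one-line ``combine Lemma~\ref{lemdisccont} and Corollary~\ref{corsb} by taking $b'$ arbitrarily close to $1/2$'', which is precisely your choice $b=3/8$, $b'\in(1/2,5/8]$ so that $b+b'\in(7/8,1]$ lands in the admissible range of Corollary~\ref{corsb}. The extra care you take with the cutoff $\eta(n\tau)$, the shift $t'$, and the extension/translation-invariance steps fills in details the paper leaves implicit (its Corollary~\ref{corsb} in fact bounds the globally defined truncated Duhamel solution, which handles the negative-time issue), and is consistent with the intended argument.
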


\begin{proof}
It suffices to combine Lemma \ref{lemdisccont} and Corollary \ref{corsb} by taking $b'$ arbitrarily close to $1/2$.
\end{proof}

\section{Local error of time discretization}\label{section5}

In this section we analyse the time discretization error which is introduced when discretising the twice-filtered Schr\"{o}dinger equation~\eqref{nls} with the scheme~\eqref{scheme}.

Setting
\begin{equation}\label{def:sigma}
\begin{aligned}
\mathcal{A} &=\left \{\epsi = (\epsi_1, \epsi_2,\epsi_3);  \epsi_1, \epsi_2,\epsi_3 \in\left \{ \tau^{-1/2}, K^+\right\} \exists i \neq j : \epsi_i = \epsi_j = \tau^{-1/2} \right\}\\
&=\left \{\left(\tau^{-1/2}, \tau^{-1/2}, \tau^{-1/2}\right),\left(\tau^{-1/2},\tau^{-1/2},K^+\right),\left(\tau^{-1/2}, K^+,\tau^{-1/2}\right),\left(K^+, \tau^{-1/2},\tau^{-1/2}\right) \right\}
\end{aligned}
\end{equation}
allows us to express the filtered Schr\"{o}dinger equation \eqref{nls} as follows
\begin{equation}\label{nlsS}
\begin{aligned}
i \partial_t u^K &= - \partial_{x}^2 u^K
+ \sum_{\epsi \in \mathcal{A}} \Pi_K \left(\Pi_{\epsi_1} u^K \Pi_{\epsi_2} \overline u^K \, \Pi_{\epsi_3} u^K\right)
\end{aligned}
\end{equation}
and Duhamel's formula (with step size $\tau$) takes the form
\begin{equation}\label{duh}
u^K(t_n+\tau) = \e^{i \tau \partial_x^2} u^K(t_n) - i \Pi_K \e^{i \tau \partial_x^2}\sum_{\epsi\in \mathcal{A}} \mathcal{T}_\epsi (u^K)(\tau,t_n),
\end{equation}
where
\begin{equation}\label{def:T}
\mathcal{T}_\epsi (u^K)(\tau,t_n) = \int_0^\tau \e^{-i s\partial_x^2}
 \left(\Pi_{\epsi_1} u^K(t_n+s) \Pi_{\epsi_2} \overline u^K(t_n+s) \, \Pi_{\epsi_3} u^K(t_n+s)\right)\mathrm{d}s.
\end{equation}
Henceforth, we will use the following notation:
$$
V^K_{\epsi_\ell}(s,t) = \e^{is \partial_x^2} \Pi_{\epsi_\ell} u^K(t), \qquad W^K_{\epsi_\ell}(s,t) = \e^{is \partial_x^2} \Pi_{\epsi_\ell} \Pi_K \sum_{\sigma \in \mathcal{A}}  \mathcal{T}_\sigma (u^K)(s,t).
$$
Iterating Duhamel's formula \eqref{duh}, i.e., plugging the expansion
\[
\Pi_{\epsi_\ell}u^K(t_n+s) = \VK{\ell} - i \WK{\ell}
\]
into \eqref{duh}, yields the representation
\begin{equation}\label{ex2}
\begin{aligned}
u^K(t_n+\tau)
& = \e^{i \tau \partial_{x}^2 }u^K(t_n) - i \Pi_K \e^{i \tau \partial_x^2}\sum_{\epsi \in \mathcal{A}}\int_0^\tau \e^{-i s\partial_x^2}
\Big [\VK{1}\VqK{2}\VK{3}\Big]\dd s \\
 &\qquad - i \Pi_K \e^{i \tau \partial_x^2}\sum_{\epsi \in \mathcal{A}}  E_\epsi(\tau,t_n)
\end{aligned}
\end{equation}
with the remainder
\begin{equation}\label{err1}
E_\epsi(\tau,t_n) =
\int_0^\tau \e^{-i s\partial_x^2}(
E_{\epsi,1}+E_{\epsi,2}+E_{\epsi,3}+E_{\epsi,4}+E_{\epsi,5}+E_{\epsi,6}+E_{\epsi,7})(s,t_n)\dd s,
\end{equation}
defined by
\begin{equation}
\begin{aligned}\label{Ej}
& E_{\epsi,1}(s,t_n) =  i  \VK{1}\WqK{2}\VK{3} \\
& E_{\epsi,2}(s,t_n) =  - i \VK{1}\VqK{2}\WK{3}\\
& E_{\epsi,3}(s,t_n) =  - i \WK{1}\VqK{2}\VK{3} \\
& E_{\epsi,4}(s,t_n) =  \WK{1}\WqK{2}\VK{3}\\
& E_{\epsi,5}(s,t_n) = - \WK{1}\VqK{2}\WK{3}\\
& E_{\epsi,6}(s,t_n) = \VK{1}\WqK{2}\WK{3}\\
& E_{\epsi,7}(s,t_n) = -i \WK{1}\WqK{2}\WK{3}.
\end{aligned}
\end{equation}
It remains to analyse the error introduced by  the  time discretization of the integrals in \eqref{ex2}, where the  discretization is carried out in such a way that the {\em dominant} terms in \eqref{ex2}, i.e., the intermediate frequency terms $ \taul < \vert k\vert \leq K $, are solved {\em exactly} while the lower order frequency terms $ |k|\leq \taul$ are approximated in a suitable manner.

\begin{lemma}\label{lem:ularge}
For sufficiently smooth functions $v,w$ it holds that
\begin{align}\label{o1}
\int_0^\tau \e^{-is\partial_{x}^2} \Big[\left( \e^{is \partial_x^2} v\right)\left\vert \e^{is \partial_x^2}  w\right\vert^2\Big]\dd s
=\mathcal{J}^\tau_1(\overline w,v,w) + R_1(\overline w,v,w)
\end{align}
with $\mathcal{J}_1^\tau$ defined in \eqref{J1} and the remainder given by
\begin{multline}\label{R1}
R_1(v_1,v_2,v_3)  = - 2i \int_0^\tau \e^{-i s \partial_x^2} \left[ \left(\e^{i s \partial_x^2}v_2\right) \int_0^s \e^{i (s-s_1)\partial_x^2} \left[
\left(\e^{-i s_1 \partial_x^2} \partial_x^2 v_1\right) \left(\e^{i s_1 \partial_x^2 } v_3\right) \right.\right.\\
\left.\left. + \left( \e^{-i s_1 \partial_x^2} \partial_x v_1\right)\left( \e^{i s_1 \partial_x^2} \partial_x v_3\right)
\right]\dd s_1\right]\dd s.
\end{multline}
\end{lemma}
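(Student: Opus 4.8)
The plan is to pass to Fourier series in $x$ and verify the claimed identity frequency by frequency; since $v$ and $w$ are smooth, all series converge absolutely and every interchange of sum and integral, as well as the one-dimensional calculus identities used below, is justified termwise. Writing $v=\sum_k \widehat v_k\,\e^{ikx}$ and $w=\sum_k\widehat w_k\,\e^{ikx}$, the integrand of the left-hand side is a triple sum over frequencies $(k_1,k_2,k_3)$ carried by the factors $\overline w$, $v$, $w$, respectively; the contribution with these frequencies lives on the output mode $k_1+k_2+k_3$, and after the outer operator $\e^{-is\partial_x^2}$ it oscillates in $s$ like $\e^{is\phi}$. Collecting the four quadratic phases (one per factor — with the conjugate factor contributing $+k_1^2$ — plus the outer $(k_1+k_2+k_3)^2$) gives
\[
\phi = 2(k_1+k_2)(k_1+k_3).
\]
Thus the left-hand side equals $\sum_{k_1,k_2,k_3}\widehat{(\overline w)}_{k_1}\,\widehat v_{k_2}\,\widehat w_{k_3}\,\e^{i(k_1+k_2+k_3)x}\int_0^\tau\e^{is\phi}\dd s$.

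Next I would show that $\mathcal{J}_1^\tau(\overline w,v,w)$ is the same sum with $\phi$ replaced by the reduced phase $\phi_{\mathrm{low}}=2k_2(k_1+k_3)$, i.e. with the quadratic-in-$k_1$ part $\phi-\phi_{\mathrm{low}}=2k_1(k_1+k_3)$ removed. For $k_2\neq0$ and $k_1+k_3\neq0$ the two $\partial_x^{-1}$ terms of \eqref{J1} produce exactly $\frac{1}{i\phi_{\mathrm{low}}}\bigl(\e^{i\tau\phi_{\mathrm{low}}}-1\bigr)=\int_0^\tau\e^{is\phi_{\mathrm{low}}}\dd s$, since $\partial_x^{-1}v_2$ and $\partial_x^{-1}(v_1v_3)$ together supply the factor $1/(i\phi_{\mathrm{low}})=1/\bigl(2ik_2(k_1+k_3)\bigr)$, while the twisted term minus the untwisted product supplies $\e^{i\tau\phi_{\mathrm{low}}}-1$. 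The two remaining terms of \eqref{J1} account for the degenerate modes on which $\partial_x^{-1}$ is singular: $\tau\,\widehat{(v_2)}_0\,v_1v_3$ covers $k_2=0$, and $\tau(v_2-\widehat{(v_2)}_0)\widehat{(v_1v_3)}_0$ covers $k_2\neq0$, $k_1+k_3=0$, each delivering the correct value $\int_0^\tau\e^{is\phi_{\mathrm{low}}}\dd s=\tau$. These three cases are disjoint and exhaustive, so summing gives $\mathcal{J}_1^\tau(\overline w,v,w)=\sum(\cdots)\int_0^\tau\e^{is\phi_{\mathrm{low}}}\dd s$.

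It then remains to identify $R_1(\overline w,v,w)$ with $\sum(\cdots)\int_0^\tau\bigl(\e^{is\phi}-\e^{is\phi_{\mathrm{low}}}\bigr)\dd s$. Setting $\psi=\phi-\phi_{\mathrm{low}}=2k_1(k_1+k_3)$, the elementary identity $\e^{is\phi}-\e^{is\phi_{\mathrm{low}}}=(\e^{is\psi}-1)\e^{is\phi_{\mathrm{low}}}=i\psi\int_0^s\e^{is_1\psi+is\phi_{\mathrm{low}}}\dd s_1$ reproduces precisely the nested Duhamel structure of \eqref{R1}: the inner $s_1$-integral, the factor $\e^{is\phi_{\mathrm{low}}}$, and the outer $s$-integral. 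The decisive point is that the prefactor $i\psi=2i(k_1^2+k_1k_3)$ splits into the monomials $k_1^2$ and $k_1k_3$, which in physical space are generated respectively by $\bigl(\e^{-is_1\partial_x^2}\partial_x^2v_1\bigr)\bigl(\e^{is_1\partial_x^2}v_3\bigr)$ and $\bigl(\e^{-is_1\partial_x^2}\partial_x v_1\bigr)\bigl(\e^{is_1\partial_x^2}\partial_x v_3\bigr)$ — exactly the inner bracket of \eqref{R1} — while the operators $\e^{i(s-s_1)\partial_x^2}$, $\e^{is\partial_x^2}$, $\e^{-is\partial_x^2}$ reproduce the phases $\e^{is_1\psi}$ and $\e^{is\phi_{\mathrm{low}}}$, and the signs combine to match the overall $-2i$.

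I expect the main obstacle to be the degenerate-mode bookkeeping in the second step, not the phase algebra: one must check that the two correction terms supply, without overlap, exactly the modes annihilated by the two $\partial_x^{-1}$ operators, and in particular that the doubly-degenerate configuration $k_2=0=k_1+k_3$ is counted once and with the right value $\tau$. The quadratic phase computation and the calculus identity $\e^{is\phi}-\e^{is\phi_{\mathrm{low}}}=i\psi\int_0^s\e^{is_1\psi+is\phi_{\mathrm{low}}}\dd s_1$ are routine; the care lies entirely in this zero-mode accounting.
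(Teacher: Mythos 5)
Your proposal is correct and follows essentially the same route as the paper's proof: a Fourier-side verification in which $\mathcal{J}_1^\tau(\overline w,v,w)$ is identified with the time integral carrying the reduced phase $2k_2(k_1+k_3)$ (with the two $\tau$-terms handling exactly the modes annihilated by $\partial_x^{-1}$), and the nested Duhamel expression $R_1$ is identified with the integral of the phase difference via the identity $\e^{is\psi}-1=i\psi\int_0^s\e^{is_1\psi}\dd s_1$. The only cosmetic difference is that the paper packages the second step as the physical-space identity $|\e^{is\partial_x^2}w|^2-\e^{is\partial_x^2}|w|^2=-2i\int_0^s\cdots\dd s_1$ rather than tracking the full triple-sum phase throughout; your zero-mode accounting, including the doubly degenerate case, matches the paper's.
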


\begin{proof}
The proof follows two steps. First we will show that in fact
\begin{equation}\label{relationJ1}
\mathcal{J}_1^\tau(\overline w,v,w) =  \int_0^\tau \e^{-is \partial_x^2} \left[\left(\e^{i s \partial_x^2} v\right) \e^{i s \partial_x^2} \vert w \vert^2\right] \dd s.
\end{equation}
The Fourier expansion of the above integral together with the relation
\[
(-k_1+k_2+k_3)^2 - (k_2^2 + (-k_1+k_3)^2) = 2 k_2 (-k_1+k_3)
\]
yields that
\begin{align*}
 \int_0^\tau &\e^{-is \partial_x^2} \left[\left(\e^{i s \partial_x^2} v\right) \e^{i s \partial_x^2} \vert w \vert^2\right] \dd s =
 \sum_{\substack{k_1,k_2,k_3 \in \mathbb{Z}\\k=-k_1+k_2+k_3}}\overline{\hat{w}}_{k_1} \hat{v}_{k_2} \hat{w}_{k_3} \e^{i k x}\int_0^\tau \e^{i s (-k_1+k_2+k_3)^2}
 \e^{- i s (k_2^2 + (-k_1+k_3)^2)} \dd s \\
 & =
  \sum_{\substack{k_1,k_2,k_3 \in \mathbb{Z}\\k=-k_1+k_2+k_3}}\overline{\hat{w}}_{k_1} \hat{v}_{k_2} \hat{w}_{k_3} \e^{i k x}\int_0^\tau \e^{2is k_2 (-k_1+k_3)} \dd s \\
  & = \sum_{\substack{k_1,k_2,k_3 \in \mathbb{Z}\\ k_2 \neq 0,\ k_1 \neq k_3\\k= -k_1+k_2+k_3}}\overline{\hat{w}}_{k_1} \hat v_{k_2} \hat{w}_{k_3} \e^{i k x} \int_0^\tau
 \e^{2is k_2(-k_1+k_3)    }  \dd s +\tau  \hat{v}_0 \vert w\vert^2 +\tau \widehat{(\vert w\vert^2)}_0 (v-\hat{v}_0)\\
& = \sum_{\substack{k_1,k_2,k_3 \in \mathbb{Z}\\ k_2 \neq 0,\ k_1 \neq k_3\\k = -k_1+k_2+k_3}}\overline{\hat{w}}_{k_1}  \hat v_{k_2} \hat{w}_{k_3} \e^{i k x} \frac{
 \e^{2i\tau k_2(-k_1+k_3)}  -1  }{2i  k_2(-k_1+k_3)} +  \tau \hat{v}_0 \vert w \vert^2 +\tau\widehat{(\vert w\vert^2)}_0 (v-\hat{v}_0)\\
 & =  \frac{i}{2} \e^{-i \tau\partial_x^2}\left[ \left(\e^{i \tau \partial_x^2} \partial_x^{-1 }v \right)\e^{i \tau \partial_x^2} \partial_x^{-1} \vert w \vert^2 \right] - \frac{i}{2} \left( \partial_x^{-1} v\right) \partial_x^{-1} \vert w\vert^2  + \tau \hat{v}_0 \vert \red w\black \vert^2 +\tau\widehat{(\vert w\vert^2)}_0 (v-\hat{v}_0)
\end{align*}
which implies \eqref{relationJ1}. Thanks to \eqref{relationJ1} we can furthermore conclude by \eqref{o1} that
\begin{equation}\label{rR1}
\begin{aligned}
R_1(\overline w, v ,w)
& = \int_0^\tau \e^{-is\partial_{x}^2} \Big[\left( \e^{is \partial_x^2} v\right)
\Big[\left\vert \e^{is \partial_x^2}  w\right\vert^2 - \e^{i s \partial_x^2} \vert w \vert^2
\Big]
\Big]\dd s.
\end{aligned}
\end{equation}
We note that
\begin{align*}
- 2i  &\int_0^s  \e^{i (s-s_1)\partial_x^2} \left[
\left(\e^{-i s_1\partial_x^2} \partial_x^2 \overline w \right) \left(\e^{i s_1\partial_x^2} w\right) + \left \vert \e^{i s_1 \partial_x^2} \partial_x w\right\vert^2\right]\dd s_1
\\& = -2i \sum_{\ell_1, \ell_2 \in \mathbb{Z}} \overline{\hat w}_{\ell_1} \hat w_{\ell_2}
\e^{i (-\ell_1+\ell_2) x} \e^{-i s (-\ell_1+\ell_2)^2}\int_0^s
\e^{i s_1(-\ell_1+\ell_2)^2} \e^{i s_1 (\ell_1^2 - \ell_2^2)} ( - \ell_1^2 + \ell_1 \ell_2)
\dd s_1
 \\&=  \sum_{\ell_1, \ell_2 \in \mathbb{Z}} \overline{\hat w}_{\ell_1} \hat w_{\ell_2}
\e^{i (-\ell_1+\ell_2) x} \e^{-i s (-\ell_1+\ell_2)^2}\left(\e^{i s (-\ell_1+\ell_2)^2}\e^{is (\ell_1^2- \ell_2^2)}-1\right)
 \\&=   \sum_{\ell_1, \ell_2 \in \mathbb{Z}} \overline{\hat w}_{\ell_1} \hat w_{\ell_2}
\e^{i (-\ell_1+\ell_2) x}\left(\e^{is (\ell_1^2- \ell_2^2)}-\e^{-i s (-\ell_1+\ell_2)^2}\right) = \left\vert \e^{i s \partial_x^2} w\right\vert^2 - \e^{i s \partial_x^2} \vert w\vert^2.
\end{align*}
Plugging the above relation into \eqref{rR1} yields \eqref{R1}. This concludes the proof.
\end{proof}

\begin{lemma}\label{lem:ubarlarge}
For sufficiently smooth functions $v,w$ it holds that
\begin{equation}\label{o2}
\int_0^\tau \e^{-is\partial_{x}^2} \Big[\left( \e^{-is \partial_x^2} \overline v\right) \left( \e^{is \partial_x^2} w\right)^2 \Big]\dd s
= \mathcal{J}^\tau_2(\overline v,w,w)
    +  R_2(\overline v,w,w)
\end{equation}
with $\mathcal{J}_2^\tau$ defined in \eqref{J2} and the remainder given by
\begin{align}\label{R2}
R_2(v_1,v_2,v_3)
= -2i
\int_0^\tau \e^{-is\partial_{x}^2} \Big[\left( \e^{-is \partial_x^2} v_1\right)
\int_0^s \e^{i (s-s_1)\partial_x^2} \left(\e^{i s_1 \partial_x^2} \partial_x v_2\right)\left(\e^{i s_1 \partial_x^2} \partial_x v_3\right)\dd s_1
 \Big]\dd s.
\end{align}
\end{lemma}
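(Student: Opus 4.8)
The proof proceeds along exactly the two-step strategy used for Lemma~\ref{lem:ularge}, only with the exponentials adapted to the fully ``unconjugated'' product $(\e^{is\partial_x^2}w)^2$.

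\emph{Step 1 (identification of the dominant part).} First I would prove the analogue of \eqref{relationJ1}, namely that
\[
\mathcal{J}^\tau_2(\overline v,w,w) = \int_0^\tau \e^{-is\partial_x^2}\Big[\big(\e^{-is\partial_x^2}\overline v\big)\,\e^{is\partial_x^2}(w^2)\Big]\dd s ,
\]
so that $\mathcal{J}^\tau_2$ is precisely what one obtains by \emph{freezing} the product $w^2$ under a single free flow. To this end I expand the integral in Fourier series, with frequencies $k_1$ carried by $\overline v$ and $k_2,k_3$ by the two copies of $w$, so that the output frequency is $k=-k_1+k_2+k_3$. The time phase collapses via the elementary identity
\[
(-k_1+k_2+k_3)^2 + k_1^2 - (k_2+k_3)^2 = 2k_1\,(k_1-k_2-k_3) = -2k_1 k ,
\]
so that $\int_0^\tau \e^{-2is k_1 k}\dd s$ equals $\tau$ on the resonant set $\{k_1=0\}\cup\{k=0\}$ and the oscillatory quotient $(\e^{-2i\tau k_1 k}-1)/(-2ik_1 k)$ otherwise. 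On the non-resonant set $k_1\neq 0,\ k\neq 0$ the $\partial_x^{-1}$ operators are well defined and, after accounting for the $1/(k_1 k)$ weight they restore, the quotient reassembles exactly the first two terms of \eqref{J2}; the resonant contribution, treated by inclusion--exclusion over the two overlapping sets $\{k_1=0\}$ and $\{k_2+k_3=0\}$, produces precisely $\tau\widehat{(\overline v\,w^2)}_0 + \tau\widehat{(\overline v)}_0\big(w^2-\widehat{(w^2)}_0\big)$.

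\emph{Step 2 (the remainder).} Granting Step~1, subtracting the frozen integral from the exact integral on the left of \eqref{o2} gives at once
\[
R_2(\overline v,w,w) = \int_0^\tau \e^{-is\partial_x^2}\Big[\big(\e^{-is\partial_x^2}\overline v\big)\big[(\e^{is\partial_x^2}w)^2 - \e^{is\partial_x^2}(w^2)\big]\Big]\dd s ,
\]
so it remains to match the inner bracket with the Duhamel-type double integral appearing in \eqref{R2}. I would verify the pointwise-in-$s$ identity
\[
-2i\int_0^s \e^{i(s-s_1)\partial_x^2}\big(\e^{is_1\partial_x^2}\partial_x w\big)^2\,\dd s_1 = (\e^{is\partial_x^2}w)^2 - \e^{is\partial_x^2}(w^2)
\]
by the same Fourier computation as in Lemma~\ref{lem:ularge}: writing the two copies of $\partial_x w$ with frequencies $\ell_1,\ell_2$, the symbol $(i\ell_1)(i\ell_2)=-\ell_1\ell_2$ cancels the denominator produced by $\int_0^s \e^{2is_1\ell_1\ell_2}\dd s_1$ through the phase identity $(\ell_1+\ell_2)^2-\ell_1^2-\ell_2^2 = 2\ell_1\ell_2$, the boundary case $\ell_1\ell_2=0$ contributing nothing, which leaves exactly $\e^{2is\ell_1\ell_2}-1$ times the factors reconstructing $(\e^{is\partial_x^2}w)^2-\e^{is\partial_x^2}(w^2)$.

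\emph{Main obstacle.} Because $\mathcal{J}_2$ carries only a single spatial derivative on each of $v_2,v_3$ (there is no $\partial_x^2$ contribution, unlike $R_1$ in \eqref{R1}), the inner identity of Step~2 is in fact the easier of the two. The genuinely delicate bookkeeping is the zero-mode accounting in Step~1: since $\overline v$ enters with the \emph{opposite} sign in the exponential, one must track carefully which frequency is conjugated and correctly handle the overlap of the two resonant sets $\{k_1=0\}$ and $\{k_2+k_3=0\}$ so as to recover the three $\tau$-terms of \eqref{J2} without double counting. Everything else is routine Fourier algebra.
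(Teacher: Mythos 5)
Your proposal is correct and follows essentially the same route as the paper: Step 1 establishes the identity $\mathcal{J}^\tau_2(\overline v,w,w)=\int_0^\tau \e^{-is\partial_x^2}\bigl[\bigl(\e^{-is\partial_x^2}\overline v\bigr)\,\e^{is\partial_x^2}(w^2)\bigr]\dd s$ via the phase relation $(-k_1+k_2+k_3)^2+k_1^2-(k_2+k_3)^2=-2k_1k$, and Step 2 identifies the remainder with the Duhamel double integral through the pointwise-in-$s$ identity for $(\e^{is\partial_x^2}w)^2-\e^{is\partial_x^2}(w^2)$, exactly as in the paper's proof. The only nitpick is that the second resonant set is $\{k=0\}=\{k_1=k_2+k_3\}$ rather than $\{k_2+k_3=0\}$ (you state it correctly earlier in the step, and the claimed outcome of the inclusion--exclusion is the right one).
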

\begin{proof}
Again we prove the assertion in two steps. First we show that in fact
\begin{equation}\label{J2rel}
\mathcal{J}^\tau_2(\overline v,w,w)=\int_0^\tau  \e^{-i s \partial_x^2} \Big[\left(\e^{-is \partial_x^2} \overline{v} \right)\left(\e^{i s \partial_x^2} w^2\right)\Big] \dd s .
\end{equation}
The above assertion follows by Fourier expansion of the integral together with the relation
\[
 (-k_1+k_2+k_3)^2 + k_1^2 - (k_2+k_3)^2  = - 2 k_1 (-k_1+k_2+k_3)
\]
which implies that
\begin{equation}
\begin{aligned}
\int_0^\tau&  \e^{-i s \partial_x^2} \Big[\left(\e^{-is \partial_x^2} \overline{v} \right)\left(\e^{i s \partial_x^2} w^2\right)\Big] \dd s
 =  \sum_{\substack{k_1,k_2,k_3 \in \mathbb{Z}\\k = - k_1+k_2+k_3}}\overline{\hat{v}}_{k_1} \hat w_{k_2} \hat{w}_{k_3} \e^{i k x} \int_0^\tau
 \e^{i s (-k_1+k_2+k_3)^2 } \e^{i s (k_1^2 - (k_2+k_3)^2)}\dd s
 \\&  = \sum_{\substack{k_1,k_2,k_3 \in \mathbb{Z}\\k = - k_1+k_2+k_3}}\overline{\hat{v}}_{k_1} \hat w_{k_2} \hat{w}_{k_3} \e^{i k x} \int_0^\tau
 \e^{- 2 is k_1 k  } \dd s
 = \sum_{\substack{k_1,k_2,k_3 \in \mathbb{Z}\\k = - k_1+k_2+k_3}}\overline{\hat{v}}_{k_1} \hat w_{k_2} \hat{w}_{k_3} \e^{i k x} \frac{
 \e^{-2i  k_1 k  \tau} - 1}{-2i k_1 k}\\
 &=  \frac{i}{2} \e^{-i \tau \partial_x^2}\partial_x^{-1} \Big[\left(\e^{-i\tau \partial_x^2} \partial_x^{-1}\overline{v} \right)\left(\e^{i \tau \partial_x^2} w^2\right)\Big] - \frac{i}{2} \partial_x^{-1} \left(w^2 \partial_x^{-1} \overline v\right) + \tau \widehat{\left(\overline v w^2\right)}_0 + \tau \overline{\hat{v}_0} \big(w^2 -  \widehat{(w^2)}_0 \big).
\end{aligned}
\end{equation}
Thanks to \eqref{J2rel} we can furthermore conclude by \eqref{o2} that
\begin{equation}\label{rR2}
\begin{aligned}
 R_2(\overline v, w,w)
& =  \int_0^\tau \e^{-is\partial_{x}^2} \Big[\left( \e^{-is \partial_x^2} \overline v\right) \left[
\left( \e^{is \partial_x^2} w\right)^2
- \e^{is\partial_x^2} w^2
\right]\Big]\dd s.
\end{aligned}
\end{equation}
We note that
\begin{align*}
-2i& \int_0^s  \e^{i (s-s_1) \partial_x^2} \left(\e^{is_1 \partial_x^2} \partial_x w\right)^2 \dd s_1  \\&=2i \sum_{\ell_1, \ell_2 \in \mathbb{Z}} \hat w_{\ell_1} \hat w_{\ell_2} \e^{i (\ell_1+\ell_2) x} \e^{- i s (\ell_1+\ell_2)^2} \int_0^s \e^{ i s_1 (\ell_1+\ell_2)^2} \e^{- i s_1 (\ell_1^2+\ell_2^2)} \ell_1 \ell_2 \dd s_1\\
 & = 2i \sum_{\ell_1, \ell_2 \in \mathbb{Z}} \hat w_{\ell_1} \hat w_{\ell_2} \e^{i (\ell_1+\ell_2) x} \e^{- i s (\ell_1+\ell_2)^2} \int_0^s \e^{ 2 i s_1 \ell_1 \ell_2 }\ell_1 \ell_2\dd s_1\\
  & =  \sum_{\ell_1, \ell_2 \in \mathbb{Z}} \hat w_{\ell_1} \hat w_{\ell_2} \e^{i (\ell_1+\ell_2) x} \e^{- i s (\ell_1+\ell_2)^2} \left( \e^{  i s( \ell_1 +\ell_2)^2} \e^{-i s(\ell_1^2+\ell_2^2)} -1\right) = \left( \e^{is \partial_x^2} w\right)^2 -  \e^{is\partial_x^2} w^2.
\end{align*}
Plugging the above relation into \eqref{rR2} proves the assertion.
\end{proof}

\begin{lemma}[Local error]\label{lem:localerror}
The local error
\[
 \mathcal{E}(\tau,t_n) := u^K(t_n+\tau) - \PhiKtau(u^K(t_n))
 \]
  of the time discretization scheme  \eqref{scheme} applied to the filtered Schr\"{o}dinger equation \eqref{nls} reads
   \begin{equation}
\begin{aligned}\label{localerror}
  \mathcal{E}(\tau,t_n) &= - 2i \Pi_K \e^{i \tau \partial_x^2} R_1( \Pi_{\taul} \overline u^K(t_n),  \Pi_{K^+} u^K(t_n), \Pi_{\taul} u^K(t_n))\\
 & \quad - i \Pi_K \e^{i \tau \partial_x^2} R_2( \Pi_{K} \overline u^K(t_n),  \Pi_{\taul} u^K(t_n), \Pi_{\taul} u^K(t_n))\\
& \quad - i \Pi_K \e^{i \tau \partial_x^2}\sum_{\epsi \in \mathcal{A}}
E_\epsi(\tau,t_n),
\end{aligned}
\end{equation}
where $E_\epsi(\tau,t_n)$ is defined in \eqref{err1} and the remainders  $R_1$ and $R_2$ are given in \eqref{R1} and \eqref{R2}, respectively.
\end{lemma}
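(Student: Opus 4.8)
The plan is to subtract the scheme's output \eqref{scheme} from the iterated Duhamel representation \eqref{ex2} and then to recognise the trilinear integrals of \eqref{ex2} as the operators $\mathcal J_1^\tau$, $\mathcal J_2^\tau$ of the scheme, up to the remainders $R_1$, $R_2$. Since both $u^K(t_n+\tau)$ in \eqref{ex2} and $\PhiKtau(u^K(t_n))$ in \eqref{scheme} begin with the same free flow $\e^{i\tau\partial_x^2}u^K(t_n)$, this leading term cancels in $\mathcal E(\tau,t_n)$; everything then reduces to matching
\[
\sum_{\epsi\in\mathcal A}\int_0^\tau \e^{-is\partial_x^2}\big[\VK{1}\,\VqK{2}\,\VK{3}\big]\dd s
\]
against the combination $2\mathcal J_1^\tau+\mathcal J_2^\tau$ that defines the scheme.

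The heart of the argument is the combinatorics of the four-element set $\mathcal A$ in \eqref{def:sigma}. Writing $v=\Pi_{\taul}u^K(t_n)$ and $p=\Pi_{K^+}u^K(t_n)$ and using $\VK{\ell}=\e^{is\partial_x^2}\Pi_{\epsi_\ell}u^K(t_n)$, I would first observe that the two labels $(\taul,\taul,K^+)$ and $(K^+,\taul,\taul)$ produce, since the pointwise product commutes, the identical integrand $(\e^{is\partial_x^2}p)\,\lvert\e^{is\partial_x^2}v\rvert^2$. Their combined contribution is therefore $2\int_0^\tau \e^{-is\partial_x^2}[(\e^{is\partial_x^2}p)\lvert\e^{is\partial_x^2}v\rvert^2]\dd s$, and Lemma \ref{lem:ularge} (applied with its $v$ playing the role of $p$ and its $w$ playing the role of $v$) rewrites it as $2\mathcal J_1^\tau(\overline v,p,v)+2R_1(\overline v,p,v)$. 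The $\mathcal J_1$ part is exactly the term $-2i\Pi_K\e^{i\tau\partial_x^2}\mathcal J_1^\tau(\Pi_{\taul}\overline u^K,\Pi_{K^+}u^K,\Pi_{\taul}u^K)$ of the scheme.

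For the remaining two labels $(\taul,\taul,\taul)$ and $(\taul,K^+,\taul)$, whose integrands are $(\e^{is\partial_x^2}v)^2\,\overline{\e^{is\partial_x^2}v}$ and $(\e^{is\partial_x^2}v)^2\,\overline{\e^{is\partial_x^2}p}$, the decisive observation is that only the conjugated factor differs and that the two conjugated factors add up to $\overline{\e^{is\partial_x^2}(v+p)}=\e^{-is\partial_x^2}\Pi_K\overline{u^K(t_n)}$, because $\Pi_{\taul}+\Pi_{K^+}=\Pi_K$ by \eqref{Kp} and $\Pi_K=\overline{\Pi_K}$. This is precisely the mechanism that restores the full projection in the conjugated slot. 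I would then apply Lemma \ref{lem:ubarlarge} (with its $v$ playing the role of $\Pi_K u^K(t_n)$ and its $w$ playing the role of $v$) to obtain $\mathcal J_2^\tau(\Pi_K\overline u^K,\Pi_{\taul}u^K,\Pi_{\taul}u^K)+R_2(\Pi_K\overline u^K,\Pi_{\taul}u^K,\Pi_{\taul}u^K)$, which reproduces the $\mathcal J_2$ term of the scheme.

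Collecting the two pieces, the four integrals equal $2\mathcal J_1^\tau+\mathcal J_2^\tau$ plus $2R_1+R_2$. When multiplied by $-i\Pi_K\e^{i\tau\partial_x^2}$ and combined with the surviving free flow, the $\mathcal J$-terms assemble exactly into $\PhiKtau(u^K(t_n))$, so that after cancellation $\mathcal E(\tau,t_n)$ is left with $-2i\Pi_K\e^{i\tau\partial_x^2}R_1-i\Pi_K\e^{i\tau\partial_x^2}R_2$ together with the term $-i\Pi_K\e^{i\tau\partial_x^2}\sum_{\epsi\in\mathcal A}E_\epsi$ carried along unchanged from \eqref{ex2}; this is \eqref{localerror}. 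I expect the only genuinely delicate point to be this bookkeeping of conjugations and frequency labels — in particular verifying that commutativity of the product identifies $(\taul,\taul,K^+)$ with $(K^+,\taul,\taul)$ (whence the factor $2$ in front of $\mathcal J_1$) and that $(\taul,\taul,\taul)$ together with $(\taul,K^+,\taul)$ correctly rebuilds $\Pi_K$ in the conjugated argument of $\mathcal J_2$. Once the labels are matched, the rest is direct substitution into the already-established Lemmas \ref{lem:ularge} and \ref{lem:ubarlarge}.
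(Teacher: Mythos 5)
Your proposal is correct and follows essentially the same route as the paper: iterate Duhamel to get \eqref{ex2}, apply Lemma \ref{lem:ularge} to the pair $(K^+,\taul,\taul)$, $(\taul,\taul,K^+)$ and Lemma \ref{lem:ubarlarge} to the pair $(\taul,K^+,\taul)$, $(\taul,\taul,\taul)$ (using $\Pi_{\taul}+\Pi_{K^+}=\Pi_K$ and linearity in the conjugated slot), and subtract the scheme so that the $\mathcal J^\tau_1$, $\mathcal J^\tau_2$ terms cancel, leaving only the $R_1$, $R_2$ and $E_\epsi$ contributions. Your bookkeeping of the four labels in $\mathcal A$ is in fact spelled out in more detail than in the paper's own (rather terse) proof, and it checks out.
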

\begin{proof}
The assertion follows by the expansion of the exact solution $u^K(t_n+\tau)$ given in \eqref{ex2} together with Lemmas \ref{lem:ularge} and \ref{lem:ubarlarge}.

More precisely, employing Lemma \ref{lem:ularge} to approximate the integral arising for $\epsi_1 = K^+$ or $\epsi_3 = K^+$ in \eqref{ex2} and Lemma \ref{lem:ubarlarge} to approximate the integrals arising for $\epsi_2 = K^+$ or $\epsi_1 = \epsi_2 = \epsi_3 = \tau^{-1/2}$ in \eqref{ex2} yields that
\begin{equation}\label{sol:exp}
\begin{aligned}
u^K(t_n+\tau)
& = \e^{i \tau \partial_{x}^2 }u^K(t_n)  \\
& \quad-2 i \Pi_K \e^{i\tau \partial_x^2} \mathcal{J}_1^\tau\left(\Pi_{\taul} \overline u^K(t_n), \Pi_{K^+} u^K(t_n), \Pi_{\taul} u^K(t_n) \right)\\
& \quad - i\Pi_K \e^{i\tau \partial_x^2}
 \mathcal{J}_2^\tau\left(\Pi_{K} \overline u^K(t_n), \Pi_{\taul} u^K(t_n), \Pi_{\taul} u^K(t_n) \right)\\
 & \quad - 2i \Pi_K \e^{i \tau \partial_x^2} R_1( \Pi_{\taul} \overline u^K(t_n),  \Pi_{K^+} u^K(t_n), \Pi_{\taul} u^K(t_n))\\
 & \quad - i \Pi_K \e^{i \tau \partial_x^2} R_2( \Pi_{K} \overline u^K(t_n),  \Pi_{\taul} u^K(t_n), \Pi_{\taul} u^K(t_n))\\
& \quad - i \Pi_K \e^{i \tau \partial_x^2}\sum_{\epsi \in \mathcal{A}}
E_\epsi(\tau,t_n).
\end{aligned}
\end{equation}
The assertion thus  follows by taking the difference of the  expansion of the exact solution given in \eqref{sol:exp}  and the numerical flow defined in \eqref{scheme}.
\end{proof}

\section{Global error analysis}\label{section6}

Let $e^{n+1} = u^K(t_{n+1}) - \unum{n+1}$ denote the time discretization error, i.e., the difference between the numerical solution $\unum{n+1} = \PhiKtau(\un)$ defined in \eqref{scheme} and the exact solution of the filtered Schr\"{o}dinger equation \eqref{nls}. Inserting a zero in terms of $\pm \PhiKtau(u^K(t_n))$, i.e., using that $$e^{n+1}  = u^K(t_{n+1}) - \PhiKtau(u^K(t_n)) +  \PhiKtau(u^K(t_n)) - \PhiKtau\un$$ we obtain by the definition of the numerical flow $\PhiKtau$ in \eqref{scheme} that
\begin{equation}\label{global}
\begin{aligned}
e^{n+1}  = \e^{i \tau\partial_x^2} e^n
&{}- 2i \Pi_K \e^{i\tau \partial_x^2} \left[\mathcal{J}_1^\tau\left(\Pi_{\taul} \overline u^K(t_n), \Pi_{K^+} u^K(t_n), \Pi_{\taul} u^K(t_n) \right) \right.\\
&\qquad\qquad\qquad\qquad\qquad\qquad\qquad - \left.\mathcal{J}_1^\tau\left(\Pi_{\taul} \unq, \Pi_{K^+} \un, \Pi_{\taul} \un \right)\right]\\
&{} - i\Pi_K \e^{i\tau \partial_x^2} \left[\mathcal{J}_2^\tau\left(\Pi_{K} \overline u^K(t_n), \Pi_{\taul} u^K(t_n), \Pi_{\taul} u^K(t_n) \right) \right.\\
&\qquad\qquad\qquad\qquad\qquad\qquad\qquad - \left.\mathcal{J}_2^\tau\left(\Pi_{K} \unq, \Pi_{\taul} \un, \Pi_{\taul} \un\right) \right]\\
&{} + \mathcal{E}(\tau,t_n),
\end{aligned}
\end{equation}
where $\mathcal{J}^\tau_1$ and $\mathcal{J}^\tau_2$ are defined in \eqref{J1} and \eqref{J2} and the local error \eqref{localerror} is given in Lemma \ref{lem:localerror}.

By solving the above recursion, we get that for $0 \leq n \leq N_{1}=\lfloor {T_{1}\over \tau}\rfloor$ with $T_{1} \leq T$, the global error
$e^n$ satisfies
\begin{equation}
\label{erroreq}
e^{n} = \tau \eta(t_n)  \sum_{k=0}^{n-1}  \e^{i(n-k) \tau \partial_{x}^2} \eta\left({k \tau \over T_{1}}\right) \Pi_{K} G_k + \mathcal{R}_{1, n} +  \mathcal{R}_{2, n},
\end{equation}
where we have set
\begin{align}
\label{defF}
G_{n} &= \tfrac{-2i}{\tau} \Bigl[\mathcal{J}_1^\tau\left(\Pi_{\taul} \overline u^K(t_n), \Pi_{K^+} u^K(t_n), \Pi_{\taul} u^K(t_n) \right)
- \mathcal{J}_1^\tau\left(\Pi_{\taul} \unq, \Pi_{K^+} \un, \Pi_{\taul} \un \right)\Bigr] \nonumber \\
&\quad - \tfrac{i}{\tau}  \Bigl[\mathcal{J}_2^\tau\left(\Pi_{K} \overline u^K(t_n), \Pi_{\taul} u^K(t_n), \Pi_{\taul} u^K(t_n) \right)
 - \mathcal{J}_2^\tau\left(\Pi_{K} \unq, \Pi_{\taul} \un, \Pi_{\taul} \un\right)\Bigr],
\end{align}
and the remainders
\begin{equation*}
\mathcal{R}_{i, n} = \tau \eta(t_n)\sum_{k=0}^{n-1}  \e^{i(n-k) \tau \partial_{x}^2}   \eta(t_k) \Pi_{K}\mathcal{F}_{i}(t_{k}), \qquad i=1,2,
\end{equation*}
with
\begin{align}\red
\label{defG1}
\mathcal{F}_{1}(t_{n})& = { 1 \over \tau } \left(
 - 2i \Pi_K  R_1( \Pi_{\taul} \overline u^K(t_n),  \Pi_{K^+} u^K(t_n), \Pi_{\taul} u^K(t_n)) \right. \nonumber\\
&\qquad\qquad  \left.{}- i \Pi_K   R_2( \Pi_{K} \overline u^K(t_n),  \Pi_{\taul} u^K(t_n), \Pi_{\taul} u^K(t_n))\right),\\
\label{defG2}\red
\mathcal{F}_{2}(t_{n}) &= -{i \over \tau} \Pi_{K} \sum_{\epsi \in \mathcal{A}} E_\epsi(\tau,t_n).
\end{align}
Note that $E_{\epsi}$ is defined in \eqref{err1} and  $R_{1}$, $R_{2}$ in \eqref{R1} and \eqref{R2}. We have introduced the truncation function $\eta$ in order to work with global Bourgain spaces. As before we will assume that $\un$ and $u^K$ are globally defined though they coincide with the actual solutions of the scheme and the PDE on a finite interval of time. We will choose $T_{1}$ sufficiently small later.

We shall first  estimate $\mathcal{R}_{1, n}$, which gives the dominant contribution to the error.

\begin{lemma} \label{lemmaR1n}
Let $s_0\in(0,1]$ and $b \in (1/2, 5/8)$. For $s_{0} >0$ we, have the estimate
\beq\label{lem61-a}
\| \mathcal{R}_{1, n} \|_{X^{0, b}_{\tau}} \leq  C_{T} K\tau^{1 \over 2 }  \tau^{ (s_{0})_{-}}.
\eeq
Moreover, if $s_{0}>1/4$, we have
\beq\label{lem61-b}
\| \mathcal{R}_{1, n} \|_{X^{0, b}_{\tau}} \leq  C_{T} (K\tau^{1 \over 2 })^{1 \over 2}\tau^{ s_{0}-  ( { 1 \over 8})_{+}},
\eeq
and if  $s_{0} >{ 1 \over 2}$, we have
\beq\label{lem61-c}
\| \mathcal{R}_{1, n} \|_{X^{0, b}_{\tau}} \leq  C_{T}\tau^{ s_{0}-  ( { 1 \over 8})_{+}}.
\eeq
\end{lemma}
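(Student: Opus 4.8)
The plan is to treat $\mathcal{R}_{1,n}$ as a discrete Duhamel sum and to reduce its $X^{0,b}_\tau$ norm, via the discrete Duhamel estimate \eqref{bourg4}, to a bound on the source term $\eta(t_k)\Pi_K\mathcal F_1(t_k)$ in $X^{0,b-1}_\tau$. Matching the expression \eqref{erroreq} for $\mathcal R_{1,n}$ with the left-hand side of \eqref{bourg4} (up to the harmless off-by-one in the summation range) gives $\|\mathcal R_{1,n}\|_{X^{0,b}_\tau}\lesssim\|\eta(t_k)\Pi_K\mathcal F_1(t_k)\|_{X^{0,b-1}_\tau}$. Since $b<5/8$ forces $b-1<-3/8$, the weight monotonicity of the discrete Bourgain norms yields $\|\cdot\|_{X^{0,b-1}_\tau}\le\|\cdot\|_{X^{0,-3/8}_\tau}$, so it suffices to estimate $\Pi_K\mathcal F_1$ in $X^{0,-3/8}_\tau$. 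The truncation $\eta(t_k)$ is distributed, using \eqref{bourg2}, onto the three cubic factors of $\mathcal F_1$ (it is a scalar multiplier in the index $k$ and commutes with all spatial operations and free flows), so that each factor becomes a cut-off copy of $u^K$ controlled in $X^{s_1,3/8}_\tau$, $s_1<s_0$, by Proposition \ref{propunifdisc}.

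It then remains to bound $\tfrac1\tau\Pi_K R_1$ and $\tfrac1\tau\Pi_K R_2$ in $X^{0,-3/8}_\tau$, with $R_1,R_2$ from \eqref{R1}, \eqref{R2}. I would first pull the (double) substep integrals out of the norm by Minkowski's inequality and discard the free propagators $\e^{\pm is\partial_x^2}$, $\e^{i(s-s_1)\partial_x^2}$: for fixed $s,s_1$ these are Fourier multipliers of unit modulus, independent of the index $n$, hence isometries of $X^{0,b}_\tau$ and of the norms $l^p_\tau L^q$. For a fixed pair of substep times the integrand is a trilinear expression in (cut-off, flowed, possibly differentiated) copies of $u^K$ with two of the three factors projected by $\Pi_{\taul}$, to which \eqref{prodd3} of Corollary \ref{corprod} (obtained from \eqref{duald} and the $L^4$ estimate \eqref{prodd1}) applies. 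Each spatial derivative falling on a $\Pi_{\taul}$-factor is traded against the regularity $s_1<s_0$: on the band $|k|\le\taul$ one has $\langle\partial_x\rangle\lesssim\tau^{-(1-s_1)/2}\langle\partial_x\rangle^{s_1}$, the surviving $\langle\partial_x\rangle^{s_1}$ being absorbed by Proposition \ref{propunifdisc}. For $R_2$, and for the cross term $(\partial_x v_1)(\partial_x v_3)$ of $R_1$, the two derivatives sit on two \emph{distinct} low factors, so the budget is $\tfrac1\tau$ (from $\mathcal F_1$) times $\tau^2$ (two substep integrals) times $\tau^{-(1-s_1)}$ (two first-order derivatives) times the loss $K\tau^{1/2}$, i.e. $K\tau^{1/2}\,\tau^{s_1}$, which is exactly \eqref{lem61-a} with $s_1=(s_0)_-$.

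The delicate point, which I expect to be the main obstacle, is the term of $R_1$ in which \emph{both} derivatives $\partial_x^2 v_1$ act on the \emph{same} low factor. Estimated in isolation this costs $\tau^{-(2-s_1)/2}$ and only yields $\tau^{s_1/2}\approx\tau^{s_0/2}$, strictly weaker than the asserted $\tau^{(s_0)_-}$; since the favourable rate \eqref{lem61-a} is precisely what drives the improved orders of Theorem \ref{maintheo}, this cannot be tolerated. The remedy is the cancellation already isolated in Lemma \ref{lem:ularge}: the quantity producing $R_1$ is $|\e^{is\partial_x^2}w|^2-\e^{is\partial_x^2}|w|^2$, with Fourier symbol $\overline{\hat w}_{\ell_1}\hat w_{\ell_2}\bigl(\e^{is(\ell_1^2-\ell_2^2)}-\e^{-is(\ell_2-\ell_1)^2}\bigr)$, bounded by $\min(2,\,2s|\ell_1||\ell_2-\ell_1|)$. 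This displays the two derivatives as one factor $|\ell_1|$ on $\overline w$ and one factor $|\ell_2-\ell_1|$ on the \emph{output} frequency, together with the $O(s)$ smallness. Interpolating $\min(2,X)\lesssim X^\theta$ with $\theta$ up to $s_0$, the input derivative $|\ell_1|^\theta$ is absorbed into the $s_0$ regularity of $w$; the genuine difficulty is the output derivative $|\ell_2-\ell_1|^\theta$, which must \emph{not} be bounded crudely by $\tau^{-\theta/2}$ (this discards half the gain) but absorbed through the modulation weight of the discrete Bourgain product estimate, which is exactly what the $L^4$ bound \eqref{prodd1} provides. Balancing this input/output split against the two available regularities $\langle\ell_1\rangle^{s_0}$, $\langle\ell_2\rangle^{s_0}$ is the crux and is what restores the full power $\tau^{(s_0)_-}$.

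Finally, the sharper bounds \eqref{lem61-b} and \eqref{lem61-c}, for $s_0>1/4$ and $s_0>1/2$, follow by repeating the argument with \eqref{prodd3} replaced by the variants \eqref{prodd3ter}, \eqref{prodd3bis}, \eqref{prodd3bisbis2} of Corollary \ref{corprod}, which reduce the loss $K\tau^{1/2}$ to $(K\tau^{1/2})^{1/2}$ (respectively to none) at the price of placing Sobolev regularity $s_1>1/4$ (respectively $s_2>1/2$) on some factors through the embeddings $W^{s_1,4}\subset L^\infty$ and $H^{s_2}\subset L^\infty$. This is affordable exactly when $s_0>1/4$ (respectively $s_0>1/2$), and the $(1/8)_+$ and $\varepsilon$ losses in \eqref{lem61-b}, \eqref{lem61-c} record the strictness of these embeddings together with the gap $s_1<s_0$. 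The remainder $R_2$ and the analysis of Lemma \ref{lem:ubarlarge} are handled identically, its derivatives being already of the favourable one-per-factor type.
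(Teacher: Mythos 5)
Your reduction of $\|\mathcal R_{1,n}\|_{X^{0,b}_\tau}$ to $\|\mathcal F_1(t_n)\|_{X^{0,-3/8}_\tau}$ via \eqref{bourg4}, and your treatment of $R_2$ and of the cross term $(\partial_x v_1)(\partial_x v_3)$ of $R_1$ (budget $\tfrac1\tau\cdot\tau^2\cdot\tau^{-(1-s_1)}\cdot K\tau^{1/2}$), coincide with the paper's argument. The gap is in the term of $R_1$ carrying $\partial_x^2 v_1$. You correctly diagnose that the naive bound only gives $\tau^{s_0/2}$, but the remedy you propose --- interpolating the symbol $\min(2,2s|\ell_1||\ell_2-\ell_1|)$ and then ``absorbing'' the output derivative $|\ell_2-\ell_1|^\theta$ through the modulation weight of \eqref{prodd1} --- is not carried out and does not work as stated: \eqref{prodd1} is a pure $L^4$ Strichartz-type bound with no mechanism for trading a spatial derivative on the product $\overline{v_1}v_3$ against modulation, and you yourself flag the balancing as the unresolved ``crux''. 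Even with $\theta=s_0$ your own accounting gives $\tau^{s_0}\,|\ell_1|^{s_0}|\ell_2-\ell_1|^{s_0}\lesssim\tau^{s_0}\cdot(\text{absorbed})\cdot\tau^{-s_0/2}=\tau^{s_0/2}$, i.e.\ no improvement.

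The missing idea is much simpler and does not touch the output frequency at all: in this term the second factor is $v_2=\Pi_{K^+}u^K$, supported on $|\xi|\ge\taul$, so by frequency localization $\|\Pi_{K^+}u^K\|_{X^{s_2,b}_\tau}\lesssim\tau^{(s_0-s_2)/2}\|u^K\|_{X^{s_0,b}_\tau}$ (and similarly $\|\Pi_{K^+}u^K\|_{l^4_\tau H^{(1/4)_+}}\lesssim\tau^{(s_0-(1/4)_+)/2}\|u^K\|_{L^\infty_T H^{s_0}}$ for \eqref{lem61-b}). This supplies exactly the half power $\tau^{(s_0-s_2)/2}$ that the double derivative $(\tau^{1/2}\partial_x)^2\Pi_{\taul}$ fails to deliver on its own, giving $\tau^{(s_0-s_2)/2}\cdot\tau^{(s_0-s_2)/2}=\tau^{s_0-s_2}$ and hence \eqref{lem61-a}; the same mechanism is what produces the $(1/8)_+$ loss in \eqref{lem61-b} and \eqref{lem61-c}. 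Without exploiting the smallness of the $\Pi_{K^+}$ factor your argument cannot reach the rate $\tau^{(s_0)_-}$, so as written the proof is incomplete.
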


\begin{proof}
By using again \eqref{bourg3} and \eqref{bourg4}, we get that
 $$ \| \mathcal{R}_{1, n} \|_{X^{0, b}_{\tau}} \lesssim \| \red \mathcal{F}_{1}(t_n)\black \|_{X^{0, {-{3\over 8}}}_{\tau}}.$$
  By using \eqref{defG1} this amounts to estimate  
  \begin{align*}
  &  I_{1}= {1 \over \tau } \|R_1( \Pi_{\taul} \overline u^K(t_n),  \Pi_{K^+} u^K(t_n), \Pi_{\taul} u^K(t_n))\|_{X^{0, - {3 \over 8}}_{\tau}}, \\
  &  I_{2}={ 1 \over \tau} \|R_2( \Pi_{K} \overline u^K(t_n),  \Pi_{\taul} u^K(t_n), \Pi_{\taul} u^K(t_n))\|_{X^{0, - {3 \over 8}}_{\tau}}.
  \end{align*}
  We first prove \eqref{lem61-a}.
  We start with the estimate of  $I_{2}$.
  We use \eqref{R2}, \eqref{prodd3} and Remark \ref{remshift}
  to  obtain that
 $$
 I_{2} \lesssim K\tau^{1 \over 2} \| u^K(t_{n})\|_{X_{\tau}^{0, {3 \over 8}}} \| \tau^{1 \over 2} \partial_{x}
  \Pi_{\tau^{- {1/2}}} u^K(t_{n})\|_{X_{\tau}^{0, {3 \over 8}}}^2.
$$
By using  Proposition  \ref{propunifdisc}, this yields
$$ I_{2} \lesssim K\tau^{1 \over 2} \| u^K\|_{X^{s_{0}, b}} \| \tau^{1 \over 2} \partial_{x}
  \Pi_{\tau^{- {1/2}}} u^K\|_{X^{s_{2}, b}}^2
$$
  with $s_{0}>s_{2}>0$, $s_{2}$ arbitrarily small
   and $b\in (7/8, 1)$.  Consequently, from the frequency localization, we find that
$$
 I_{2} \lesssim \tau^{ s_{0}-s_{2}} (K\tau^{1 \over 2})  \| u^K\|_{X^{s_{0}, b}} \|   \Pi_{\tau^{- {1/2}}} u^K\|_{X^{s_{0}, b}}^2
  \leq C_{T} \tau^{ s_{0}- s_{2}}\,(K\tau^{1 \over 2}),
$$
{where we use Corollary \ref{corsb} for the last estimate}.

  It remains to estimate $I_{1}$.  By using the definition \eqref{R1} and the same arguments, we get that
$$ I_{1} \lesssim   K\tau^{1 \over 2} \| (\tau^{1 \over 2} \partial_{x})^2  \Pi_{ \tau^{- {1/2}}} u \|_{X^{s_{2}, b}}
   \| \Pi_{K^{+}}u^K\|_{X^{s_{2}, b}}  \, \|u^K\|_{X^{s_{2}, b}} +
   K\tau^{1 \over 2} \| u^K\|_{X^{s_{0}, b}} \| \tau^{1 \over 2} \partial_{x}
  \Pi_{\tau^{- {1/2}}} u^K\|_{X^{s_{2}, b}}^2
$$
 again with $s_{0}>s_{2}>0, $ $s_{2}$ arbitrarily small.  The second term is similar as before. For the first term, by using the frequency localization, in particular the fact that on the support of $\Pi_{K^{+}}$,  $ \tau^{1 \over 2}|\xi | \geq 1$,
  we then obtain that
$$   \| (\tau^{1 \over 2} \partial_{x})^2  \Pi_{ \tau^{- {1/2}}} u \|_{X^{s_{2}, b}}
  \lesssim \tau^{ (s_{0}- s_{2})/2}   \| u \|_{X^{s_{0}, b}}, \quad  \| \Pi_{K^{+}}u^K\|_{X^{s_{2}, b}}
   \lesssim   \tau^{ (s_{0}- s_{2})/2} \| u^K\|_{X^{s_{0}, b}}.
$$
This also yields
$$
I_{1}\leq C_{T} \tau^{ s_{0}- s_{2}}\,(K\tau^{1 \over 2}),
$$
which concludes the proof of \eqref{lem61-a}.

To prove \eqref{lem61-b}, we follow the same lines, but we use \eqref{prodd3ter} instead of \eqref{prodd3} since $s_{0}>1/4$.  This yields
  \begin{multline*}
  \| \mathcal{R}_{1, n} \|_{X^{0, b}_{\tau}}  \lesssim
 ( K\tau^{1 \over 2})^{1 \over 2} \| u^K (t_{n})\|_{l^4_{\tau}H^{({1 \over 4})_{+} }}  \| \tau^{1 \over 2} \partial_{x}
  \Pi_{\tau^{- {1 \over 2} } } u^K(t_{n})\|_{X_{\tau}^{0, {3\over 8} }}^2
   \\+  (K\tau^{1 \over 2})^{1 \over 2} \| (\tau^{1 \over 2} \partial_{x})^2  \Pi_{ \tau^{- {1 \over 2}}} u^K (t_{n}) \|_{X_{\tau}^{0, {3 \over 8 } } }
\| \Pi_{K^+}(u^K(t_{n})\|_{l^4_{\tau}H^{({1 \over 4})_{+} }}    \, \| \Pi_{ \tau^{- {1 \over 2}}} u^K (t_{n})\|_{X_{\tau}^{0, {3 \over 8}}}.
  \end{multline*}
By using again the same estimates as above,  it  thus only remains to estimate
$\| u^K (t_{n})\|_{l^4_{\tau}H^{({1 \over 4})_{+} }}$ and $\| \Pi_{K^+}(u^K(t_{n})\|_{l^4_{\tau}H^{({1 \over 4})_{+} }}$.
We can just use that
$$ \| u^K (t_{n})\|_{l^4_{\tau}H^{({1 \over 4})_{+} }}
 \lesssim T^{ 1 \over 4} \|u^K\|_{L^\infty_{T}H^{s_{0}}} \lesssim T^{ 1 \over 4} \|u^K\|_{X^{s_{0}, b}},\qquad b>1/2
 $$
 and, by frequency localization for $|\xi| \geq \tau^{- { 1 \over 2}}$, that
\beq
\label{bisbis2} \| \Pi_{K^+}(u^K(t_{n})\|_{l^4_{\tau}H^{({1 \over 4})_{+} }}  \lesssim_{T}  \tau^{{ 1 \over 2} ({s_{0}- ({1\over 4})_{+}})} \|u^K\|_{L^\infty_{T}H^{s_{0}}}.
\eeq
This yields \eqref{lem61-b}.

Finally to get \eqref{lem61-c}, we follow the same lines but we now use \eqref{prodd3bisbis} and  \eqref{prodd3bisbis2}  . This yields
\begin{multline*}  \| \mathcal{R}_{1, n} \|_{X^{0, b}_{\tau}} \lesssim   \| \tau^{1 \over 2} \partial_{x}
  \Pi_{\tau^{- {1 \over 2} } } u^K(t_{n})\|_{X_{\tau}^{0, {3\over 8} }}^2  \| u^K (t_{n})\|_{l^\infty_{\tau}H^{({1 \over 2})_{+} }}
   \\+  \| (\tau^{1 \over 2} \partial_{x})^2  \Pi_{ \tau^{- {1 \over 2}}} u^K (t_{n}) \|_{X_{\tau}^{0, {3 \over 8 } } }
   \, \| \Pi_{K^+}u^K (t_{n})\|_{l^4_{\tau}H^{({1 \over 4})_{+} }}  \|  u^K (t_{n}) \|_{l^\infty_{\tau}H^{({1 \over 2})_{+} }}.
 \end{multline*}
We then use the same estimates, in particular \eqref{bisbis2} and the fact that
$$
\| u^K (t_{n})\|_{l^\infty_{\tau}H^{({1 \over 2})_{+} }} \leq \|u^K\|_{L^\infty_{T}H^{({1 \over 2})_{+} }}.
$$
This ends the proof.
\end{proof}

We shall next  estimate $\mathcal{R}_{2,n}$.

\begin{lemma}
\label{lemmaR2n}
For $s_{0}> 0$ and  $b \in (1/2, 5/8)$, we have the estimate
\begin{equation}
\label{R2n1} \| \mathcal{R}_{2, n} \|_{X^{0, b}_{\tau}} \leq C_{T} \left(\tau^{1 \over 4}  (K \tau^{1 \over 2})^{2} + \tau^{1 \over 2} (K\tau^{1 \over 2})^3
 + \tau^{3 \over 2} (K \tau^{1 \over 2})^{4}\right).
 \end{equation}
 Moreover if $s_{0}>{ 1 \over 4}$, we have
 \begin{equation}
 \label{R2n2}   \| \mathcal{R}_{2, n} \|_{X^{0, b}_{\tau}} \leq C_{T}(K \tau^{1 \over 2})^{1 \over 2} \tau^{ 5 \over 8}
 \end{equation}
 and if $s_{0}>1/2$, we have
 \begin{equation}
 \label{R2n3}   \| \mathcal{R}_{2, n} \|_{X^{0, b}_{\tau}}  \leq C_{T} \tau.
 \end{equation}
\end{lemma}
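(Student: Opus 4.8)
The plan is to reduce the estimate of $\mathcal{R}_{2,n}$ to a bound on the sequence $m\mapsto\mathcal{F}_2(t_m)$ in the negative-index discrete Bourgain space $X^{0,-3/8}_\tau$, and then to unfold the definition of $\mathcal{F}_2$ term by term. First I would observe that $\mathcal{R}_{2,n}$ has exactly the discrete Duhamel form of \eqref{bourg4}, with summand $u_m=\eta(t_m)\Pi_K\mathcal{F}_2(t_m)$; hence \eqref{bourg4} gives $\|\mathcal{R}_{2,n}\|_{X^{0,b}_\tau}\lesssim\|\eta(t_m)\Pi_K\mathcal{F}_2(t_m)\|_{X^{0,b-1}_\tau}$, and since $b\le 5/8$ forces $b-1\le-3/8$, monotonicity of the norm in the time-regularity index reduces matters to controlling $\|\eta(t_m)\Pi_K\mathcal{F}_2(t_m)\|_{X^{0,-3/8}_\tau}$. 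Recalling \eqref{defG2} and \eqref{err1}, this amounts to estimating, for each $\epsi\in\mathcal{A}$ and each $j\in\{1,\dots,7\}$, the contribution $\tfrac1\tau\big\|\Pi_K\int_0^\tau\e^{-is\partial_x^2}E_{\epsi,j}(s,t_m)\,ds\big\|_{X^{0,-3/8}_\tau}$.

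Then I would organize the seven terms $E_{\epsi,j}$ according to the number $w\in\{1,2,3\}$ of $W$-factors they contain ($w=1$ for $E_{\epsi,1},E_{\epsi,2},E_{\epsi,3}$; $w=2$ for $E_{\epsi,4},E_{\epsi,5},E_{\epsi,6}$; $w=3$ for $E_{\epsi,7}$), since each factor $W^K_{\epsi_\ell}(s,t_m)$ carries one extra cubic nonlinearity (through $\mathcal{T}_\sigma$) together with one extra short time integral. For the outer integral I would use Minkowski, $\big\|m\mapsto\int_0^\tau\e^{-is\partial_x^2}E_{\epsi,j}(s,t_m)\,ds\big\|_{X^{0,-3/8}_\tau}\le\int_0^\tau\|m\mapsto E_{\epsi,j}(s,t_m)\|_{X^{0,-3/8}_\tau}\,ds$, using that for fixed $\theta\in[0,\tau]$ the operator $\e^{\pm i\theta\partial_x^2}$ is a Fourier multiplier of modulus one, independent of the time–frequency, hence an isometry of every $X^{s',b'}_\tau$. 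At fixed $s$ the product $E_{\epsi,j}(s,t_m)$ is a trilinear expression in the $V^K_{\epsi_\ell}$ and $W^K_{\epsi_\ell}$ with at least two factors localized to frequencies $\le\tau^{-1/2}$ (because $\epsi\in\mathcal{A}$); this is exactly the structure of Corollary~\ref{corprod}, so I apply \eqref{prodd3} to the outer product, producing one factor $K\tau^{1/2}$ times the $X^{0,3/8}_\tau$ norms of the three factors.

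For a $V$-factor, the isometry property and Proposition~\ref{propunifdisc} (applied with $t'\in[0,\tau]\subset[0,4\tau]$, after inserting smooth temporal cutoffs that I propagate onto each factor through \eqref{bourg2}) bound $\|V^K_{\epsi_\ell}\|_{X^{0,3/8}_\tau}$ by $C_T$. A $W$-factor needs one further layer: I write $W^K_{\epsi_\ell}$ as the short integral $\int_0^s$ of a cubic term, bound it again by Minkowski and the isometry, estimate the inner cubic in $X^{0,-3/8}_\tau$ by \eqref{prodd3} (a second factor $K\tau^{1/2}$ and three $C_T$'s from Proposition~\ref{propunifdisc}), and finally pass from $X^{0,-3/8}_\tau$ up to the $X^{0,3/8}_\tau$ norm required by the outer product at the cost $\tau^{-3/4}$ of the embedding \eqref{embdisc1}. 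Together with the length $\le\tau$ of the inner integral this gives $\|W^K_{\epsi_\ell}\|_{X^{0,3/8}_\tau}\lesssim\tau^{1/4}(K\tau^{1/2})\,C_T$. Collecting contributions — the outer $\int_0^\tau$ cancelling the prefactor $1/\tau$, one factor $K\tau^{1/2}$ from the outer product, and a factor $\tau^{1/4}(K\tau^{1/2})$ for each $W$-factor — yields $\tau^{1/4}(K\tau^{1/2})^2$ for $w=1$ and $\tau^{1/2}(K\tau^{1/2})^3$ for $w=2$; the triple-$W$ term $E_{\epsi,7}$ is treated in the same way, exploiting that two of its three factors are necessarily low-frequency, to produce the third summand $\tau^{3/2}(K\tau^{1/2})^4$, giving \eqref{R2n1}.

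For the refined bounds \eqref{R2n2} and \eqref{R2n3} I would repeat the scheme but replace \eqref{prodd3} by the sharper estimates of Corollary~\ref{corprod}: when $s_0>1/4$ the factors carry at least $1/4_+$ derivatives, so \eqref{prodd3ter} (and, inside the $W$-factors, \eqref{prodd3bis}) let me trade powers of $K\tau^{1/2}$ for the $l^4_\tau H^{s_1}$ regularity supplied by Proposition~\ref{propunifdisc}, leaving a single $(K\tau^{1/2})^{1/2}$ and the improved power $\tau^{5/8}$; when $s_0>1/2$ the embeddings $H^{s_2}\subset L^\infty$ in \eqref{prodd3bisbis}, \eqref{prodd3bisbis2} remove the $K\tau^{1/2}$ factor altogether, exactly as in the passage from \eqref{lem61-a} to \eqref{lem61-c} in Lemma~\ref{lemmaR1n}, yielding the clean rate $\tau$. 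The main obstacle throughout is the bookkeeping of the nested short-time integrals: each $W$-factor is delivered by the $L^4$-based product estimate in the negative-index space $X^{0,-3/8}_\tau$, whereas the outer product demands it in $X^{0,3/8}_\tau$, and the resulting $\tau^{-3/4}$ loss from \eqref{embdisc1} must be balanced against the smallness $\tau$ of the inner integral and tracked together with the frequency-dependent powers of $K\tau^{1/2}$ so as to land precisely on the stated exponents.
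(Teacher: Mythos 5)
Your proposal follows essentially the same route as the paper: reduction via \eqref{bourg4} to $\|\mathcal{F}_{2}(t_n)\|_{X^{0,-3/8}_{\tau}}$, organization of the seven terms $E_{\epsi,j}$ by the number of $W$-factors, the product estimate \eqref{prodd3} for the outer trilinear expression, and the bound $\|\mathcal{T}_\kappa(u^K(t_n))\|_{X^{0,3/8}_{\tau}}\lesssim \tau^{1/4}(K\tau^{1/2})C_T$ obtained by trading the length $\tau$ of the inner integral against the $\tau^{-3/4}$ loss of \eqref{embdisc1}, with the sharper estimates of Corollary~\ref{corprod} substituted in to obtain \eqref{R2n2} and \eqref{R2n3}. (Note that the triple-$W$ term yields $\tau^{3/4}(K\tau^{1/2})^4$ rather than $\tau^{3/2}(K\tau^{1/2})^4$ by this computation, but the paper's own displayed estimate produces the same exponent, and the discrepancy is harmless since \eqref{R2n1} is only invoked in the regime $K\tau^{1/2}=1$.)
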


\begin{proof}
We first use \eqref{bourg4} and  Remark \ref{remshift} to estimate
$$
\| \mathcal{R}_{2, n} \|_{X^{0, b}_{\tau}} \lesssim \|  \red \mathcal{F}_{2}(t_n) \|_{X^{0, {-{3\over 8}}}_{\tau}}.
$$
 Next, by using \eqref{err1} and the product estimate \eqref{prodd3}, we get
 \begin{multline*}
 \| \mathcal{F}_{2, n} \|_{X^{0, {-{3\over 8}}}_{\tau}}
  \lesssim K \tau^{1 \over 2} \sum_{\kappa \in \mathcal{A}}   \left(
   \| u^K(t_{n}) \|_{X^{0, {3 \over 8}}}^2   \|\Tkappa (u^K(t_{n})) \|_{X^{0, {3 \over 8}}_{\tau}}
 \right.  \\
 \left. {}+   \| u^K(t_{n}) \|_{X^{0, {3 \over 8}}_{\tau}}   \|\Tkappa (u^K(t_{n})) \|_{X^{0, {3 \over 8}}_{\tau}}^2 +
      \|\Tkappa (u^K(t_{n})) \|_{X^{0, {3 \over 8}}_{\tau}}^3\right).
 \end{multline*}
  Next, by using \eqref{def:T}, we get that
 $$
 \|\Tkappa (u^K(t_{n})) \|_{X^{0, {3 \over 8}}_{\tau}} \lesssim \tau \sup_{t' \in [0, \tau]}
  \sum_{\epsi \in \mathcal{A}}\left\| \Pi_{\epsi_{1}}
  u^K(t_{n}+ t') \Pi_{\epsi_{2}} \overline{u}^K(t_{n}+ t')   \Pi_{\epsi_{3}} \overline{u}^K(t_{n}+ t')  \right\|_{X^{0, {3 \over 8}}_{\tau}}.
  $$
By using \eqref{embdisc1}, we thus obtain that
$$ \|\Tkappa (u^K(t_{n})) \|_{X^{0, {3 \over 8}}_{\tau}} \lesssim \tau^{1 \over 4} \sup_{t' \in [0, \tau]}
  \sum_{\epsi \in \mathcal{A}}\left\| \Pi_{\epsi_{1}}
  u^K(t_{n}+ t') \Pi_{\epsi_{2}} \overline{u}^K(t_{n}+ t')   \Pi_{\epsi_{3}} \overline{u}^K(t_{n}+ t')  \right\|_{X^{0, {-{3 \over 8}}}_{\tau}}.$$
  Consequently, by using again the product estimate \eqref{prodd3}, we find that
 $$   \|\Tkappa (u^K(t_{n})) \|_{X^{0, {3 \over 8}}_{\tau}} \lesssim \tau^{1 \over 4} (K \tau^{1 \over 2})
    \sup_{t' \in [0, \tau]} \|u^K(t_{n}+ t')\|_{X^{0, {3 \over 8}}_{\tau}}^3.$$
  Then, by using Proposition \ref{propunifdisc}, we get
  $$  \|\Tkappa (u^K(t_{n})) \|_{X^{0, {3 \over 8}}_{\tau}} \lesssim \tau^{1 \over 4} (K \tau^{1 \over 2}) C_{T}$$
  and hence
  $$   \| \mathcal{R}_{2, n} \|_{X^{0, b}_{\tau}} \lesssim K \tau^{1 \over 2} \left( \tau^{1 \over 4}  (K \tau^{1 \over 2})
   +   (\tau^{1 \over 4}  (K \tau^{1 \over 2}))^2 +  (\tau^{1 \over 4}  (K \tau^{1 \over 2}))^3 \right) C_{T}. $$
  This proves \eqref{R2n1}.

 To get \eqref{R2n2}, we use \eqref{duald} and \eqref{prodd3ter} to get this yields
 \begin{multline*}
 \| \mathcal{R}_{2, n} \|_{X^{0, b}_{\tau} } \lesssim \|  \red \mathcal{F}_{2}(t_n) \black \|_{X^{0, -{3\over 8} }_{\tau} }
 \\ \lesssim   (K \tau^{1 \over 2})^{1 \over 2 } \sum_{\kappa \in \mathcal{A}} \Bigl(
   \| u^K(t_{n}) \|_{X^{0, {3 \over 8} }_{\tau}}^2   \|\Tkappa (u^K(t_{n})) \|_{l^4_{\tau} H^{({1 \over 4})_{+}}}
    +  \| u^K(t_{n}) \|_{X^{0, {3 \over 8} }_{\tau}}  \| u^K(t_{n}) \|_{l^4_{\tau} H^{({1 \over 4})_{+}}}
     \|\Tkappa (u^K(t_{n})) \|_{X^{0, {3 \over 8} }_{\tau}} \\
      +  \|\Tkappa (u^K(t_{n})) \|_{X^{0, {3 \over 8} }_{\tau}}^2 \| u^K(t_{n}) \|_{l^4_{\tau} H^{({1 \over 4})_{+}}}
      + \|\Tkappa (u^K(t_{n})) \|_{X^{0, {3 \over 8} }_{\tau}}\| u^K(t_{n}) \|_{X^{0, {3 \over 8} }_{\tau}}
      \|\Tkappa (u^K(t_{n})) \|_{l^4_{\tau} H^{({1 \over 4})_{+}}} \\
       +  \|\Tkappa (u^K(t_{n})) \|_{X^{0, {3 \over 8} }_{\tau}}^2 \| \Tkappa (u^K(t_{n})) \|_{l^4_{\tau} H^{({1 \over 4})_{+}}}\Bigr).
\end{multline*}
We can use again \eqref{embdisc1} and  the trivial estimate
$$ \| u^K(t_{n}) \|_{l^4_{\tau} H^{({1 \over 4})_{+}}} \lesssim T^{1 \over 4}  \| u^K \|_{L^\infty H^{({1 \over 4})_{+}}}$$
so that it only remains to estimate  $\|\Tkappa (u^K(t_{n})) \|_{X^{0, {3 \over 8} }_{\tau}}$ and
$\|\Tkappa (u^K(t_{n})) \|_{l^4_{\tau} H^{({1 \over 4})_{+}}}$.
For the first one, we use again \eqref{embdisc1} to write
$$
\|\Tkappa (u^K(t_{n})) \|_{X^{0, {3 \over 8} }_{\tau}} \lesssim  \tau^{5 \over 8} \sup_{t' \in [0, \tau]}
\sum_{\epsi \in \mathcal{A}}\left\| \Pi_{\epsi_{1}}
u^K(t_{n}+ t') \Pi_{\epsi_{2}} \overline{u}^K(t_{n}+ t')   \Pi_{\epsi_{3}} u^K(t_{n}+ t')  \right\|_{X^{0, 0}_{\tau}}.
$$
Next, from H\"{o}lder's inequality and the Sobolev embedding $W^{ ({ 1 \over 4} )_{+}, 4} \subset L^\infty$, we get that
$$
\|\Tkappa (u^K(t_{n})) \|_{X^{0, {3 \over 8} }_{\tau}} \lesssim  \tau^{5 \over 8} \sup_{t' \in [0, \tau]}
\lesssim  \| \Pi_{\tau^{- {1 \over 2}}} u^K(t_{n}+ t') \|_{l^4_{\tau}W^{ ({ 1 \over 4} )_{+}, 4}}^2 \| \Pi_{K} u^K(t_{n} + t') \|_{l^\infty_{\tau} L^2}
$$
and hence by using again \eqref{prodd1}, Proposition \ref{propunifdisc} and \eqref{bshift} we get that
for $s_{0}>1/4$,
$$
\|\Tkappa (u^K(t_{n})) \|_{X^{0, {3 \over 8} }_{\tau}} \lesssim  \tau^{5 \over 8} \|u^K\|^3_{X^{s_{0},b}} \lesssim C_{T}  \tau^{5 \over 8}.
$$
Next, we estimate $\|\Tkappa (u^K(t_{n})) \|_{l^4_{\tau} H^{({1 \over 4})_{+}}}$. We begin with
\begin{equation}
\label{prodhigh2}
\|\Tkappa (u^K(t_{n})) \|_{l^4_{\tau} H^{({1 \over 4})_{+}}} \lesssim \tau \sup_{t' \in [0, \tau]} \sum_{\epsi \in \mathcal{A}}\left\| \Pi_{\epsi_{1}}
u^K(t_{n}+ t') \Pi_{\epsi_{2}} \overline{u}^K(t_{n}+ t')   \Pi_{\epsi_{3}} u^K(t_{n}+ t')  \right\|_{l^4_{\tau} H^{({1 \over 4})_{+}}}.
\end{equation}
{Next, we observe  that for all sequences $(u_{n})$, $(v_{n})$, $(w_{n})$, and $s>0$ we have
\begin{equation} 
\label{prodhigh}
\| \Pi_{K^+} u_{n}   \Pi_{\tau^{- {1 \over 2} }} v_{n}  \Pi_{\tau^{- {1 \over 2} }}w_{n} \|_{H^s}
 \lesssim \|  \Pi_{K^+}u_{n}\|_{H^s} \|   \Pi_{\tau^{- {1 \over 2} }} v_{n}\|_{L^\infty} \|   \Pi_{\tau^{- {1 \over 2} }} w_{n}\|_{L^\infty}.
\end{equation}
Indeed, by using the generalized Leibniz rule \eqref{Leibniz}, we have that
\begin{multline*}  \| \Pi_{K^+} u_{n}   \Pi_{\tau^{- {1 \over 2} }} v_{n}  \Pi_{\tau^{- {1 \over 2} }}w_{n} \|_{H^s}
 \lesssim  \|  \Pi_{K^+}u_{n}\|_{H^s} \|   \Pi_{\tau^{- {1 \over 2} }} v_{n}\|_{L^\infty} \|   \Pi_{\tau^{- {1 \over 2} }} w_{n}\|_{L^\infty}
  \\+  \|  \Pi_{K^+}u_{n}\|_{L^2} \|  \langle \partial_{x} \rangle^s \Pi_{\tau^{- {1 \over 2} }} v_{n}\|_{L^{\infty}} \|   \Pi_{\tau^{- {1 \over 2} }} w_{n}\|_{L^\infty}
   +  \|  \Pi_{K^+}u_{n}\|_{L^2} \|   \Pi_{\tau^{- {1 \over 2} }} v_{n}\|_{L^\infty} \| \langle \partial_{x} \rangle^s  \Pi_{\tau^{- {1 \over 2} }} w_{n}\|_{L^\infty}.\end{multline*}
By frequency localization, we observe that
$$   \|  \Pi_{K^+}u_{n}\|_{L^2} \lesssim \tau^{s \over 2} \|u_{n}\|_{H^s}, \quad
 \|  \langle \partial_{x} \rangle^s \Pi_{\tau^{- {1 \over 2} }} v_{n}\|_{L^{\infty}}
  \lesssim { 1 \over \tau^{s \over 2}} \|v_{n}\|_{L^\infty}, \quad
  \|  \langle \partial_{x} \rangle^s \Pi_{\tau^{- {1 \over 2} }} w_{n}\|_{L^{\infty}}
  \lesssim { 1 \over \tau^{s \over 2}} \|w_{n}\|_{L^\infty} $$
  and hence
  \eqref{prodhigh} follows.
  }
 We thus deduce from \eqref{prodhigh2} and \eqref{prodhigh} that
\begin{align*}  \|\Tkappa (u^K(t_{n})) \|_{l^4_{\tau} H^{({1 \over 4})_{+}}} &  \lesssim \sup_{t' \in [0, \tau]} \tau \| u^K(t_{n} + t') \|_{l^\infty_{\tau}H^{({1 \over 4})_{+}}}
\| \Pi_{\tau^{- {1 \over 2}}} u^K(t_{n}+ t') \|_{l^8_{\tau} W^{({1 \over 4})_{+}, 4 }}^2 \\
& \lesssim  \sup_{t' \in [0, \tau]} \tau^{3 \over 4} \sup_{t' \in [0, \tau]} \tau \| u^K(t_{n} + t') \|_{l^\infty_{\tau}H^{({1 \over 4})_{+}}}
\| \Pi_{\tau^{- {1 \over 2}}} u^K(t_{n}+ t') \|_{l^4_{\tau} W^{({1 \over 4})_{+}, 4 }}^2.
\end{align*}
Hence by using again \eqref{prodd1}, Proposition \ref{propunifdisc} and \eqref{bshift} we finally get that
$$
\|\Tkappa (u^K(t_{n})) \|_{l^4_{\tau} H^{({1 \over 4})_{+}}} \lesssim C_{T} \tau^{3 \over 4}
$$
if $s_{0}>{1 \over 4}.$ We thus deduce \eqref{R2n2}.

It remains to prove \eqref{R2n3}. We now use \eqref{prodd3bisbis} and \eqref{prodd3bisbis2} to get that
\begin{align*}
\| \mathcal{R}_{2, n} \|_{X^{0, b}_{\tau} } &\lesssim \|  \red \mathcal{F}_{2}(t_n) \black\|_{X^{0, -{3\over 8} }_{\tau} } \\
& \lesssim \| u^K(t_{n}) \|_{X^{0, {3 \over 8} }_{\tau}}^2   \|\Tkappa (u^K(t_{n})) \|_{l^\infty_{\tau} H^{({1 \over 2})_{+}}}\\
&\qquad +  \| u^K(t_{n}) \|_{X^{0, {3 \over 8} }_{\tau}}  \|\Tkappa (u^K(t_{n})) \|_{l^4_{\tau} H^{({1 \over 4})_{+}}}
   \| u^K(t_{n}) \|_{l^\infty_{\tau} H^{({1 \over 2})_{+}}} \\
&\qquad +\| u^K(t_{n}) \|_{X^{0, {3 \over 8} }_{\tau}}  \|\Tkappa (u^K(t_{n})) \|_{l^4_{\tau} H^{({1 \over 4})_{+}}}
    \|\Tkappa (u^K(t_{n})) \|_{l^\infty_{\tau} H^{({1 \over 2})_{+}}}\\
&\qquad +   \|\Tkappa (u^K(t_{n})) \|_{l^4_{\tau} H^{({1 \over 4})_{+}}}^2 \|\Tkappa (u^K(t_{n})) \|_{l^\infty_{\tau} H^{({1 \over 2})_{+}}}.
\end{align*}
Note that, for the last term in the above right-hand side, we have used that in the estimate \eqref{prodd3bisbis}, we can replace in the right hand side the norm $\| u_{n}\|_{X_{\tau}^{0, {3 \over 8}}}$ by the norm $ \|u_{n}\|_{l^4_{\tau} H^{( {1 \over 4})_{+}}}$ by using the Sobolev embedding in space instead of the Bourgain estimate~\eqref{prodd1}. Since we have the obvious estimate $ \| u^K(t_{n}) \|_{l^\infty_{\tau} H^{({1 \over 2})_{+}}} \lesssim \| u^K \|_{L^\infty_{T} H^{({1 \over 2})_{+}}}$ and since
$$
\|\Tkappa (u^K(t_{n})) \|_{l^4_{\tau} H^{({1 \over 4})_{+}}} \lesssim T^{1 \over 4}  \|\Tkappa (u^K(t_{n})) \|_{l^\infty_{\tau} H^{({1 \over 2})_{+}}},
$$
it only remains to estimate  $\|\Tkappa (u^K(t_{n})) \|_{l^\infty_{\tau} H^{({1 \over 2})_{+}}}$. From standard product estimates since $H^{({1 \over 2})_{+}}$ is an algebra, we get that
$$
\|\Tkappa (u^K(t_{n})) \|_{l^\infty_{\tau} H^{({1 \over 2})_{+}}} \lesssim \tau \|u^K(t_{n}) \|_{l^\infty_{\tau} H^{({1 \over 2})_{+}}}^3 \lesssim C_{T} \tau.
$$
This concludes the proof.
\end{proof}

\section{Proof of Theorem \ref{maintheo}} \label{section7}

We first observe that thanks to \eqref{u-uK}, we have from the triangle inequality that
\begin{equation}
\label{erreurtotale}
\|u(t_{n})- u^n_{\tau}\|_{L^2} \leq  C_{T}\tau^{s_{0} \alpha \over 2} + \|u^K(t_{n}) - u^n_{\tau}\|_{L^2} \leq C_{T}\tau^{s_{0} \alpha \over 2} + \| e^n\|_{l^\infty_{\tau}L^2},
\end{equation}
where $e^{n}$ solves \eqref{erroreq}. To get the error estimates of Theorem \ref{maintheo}, it thus suffices to estimate $ \|e^n\|_{X_{\tau}^{0,b}}$ for some $b\in (1/2, 5/8)$  thanks to \eqref{sobbourg}. Note that there are two parts in the total error, the space discretization part above and the time discretization error on the right-hand side of \eqref{erroreq} which is estimated in Lemma \ref{lemmaR1n} and Lemma \ref{lemmaR2n}. We shall optimize the total error by choosing the best possible $\alpha$ as regularity allows.

We first prove~\eqref{final1}. For very rough data, when $0 < s_{0} \leq 1/4$, we need the estimate \eqref{prodd1} without loss. This forces us to choose $K= \tau^{- { 1 \over 2}}$, hence $\alpha = 1$ without allowing us to optimize the error. We thus obtain from Lemma \ref{lemmaR1n} and Lemma \ref{lemmaR2n} that
\beq \label{erreurpart1}
\| \mathcal{R}_{1,n} \|_{X_{\tau}^{0,b}} + \| \mathcal{R}_{2, n} \|_{X_{\tau}^{0,b}}
\leq C_{T} \left(   \tau^{ s_{0}- { 0_{+}}} +\tau^{ 1 \over 4}\right) \leq C_{T}\tau^{ s_{0}- { 0_{+}}}.
\eeq
Next, we decompose
\allowdisplaybreaks[4]
\beq
\label{DecGn}
G_{n} =L_{n} - Q_{n} + C_{n}
\eeq
with
\begin{multline}
\label{defLfin}
L_{n}= { 1 \over \tau} \left( - 2 i \left[ \mathcal{J}_{1}^r(\Pi_{\tau^{- {1 \over 2}} }  \overline{e}^{n},  \Pi_{K^+}u^K(t_{n}),
\Pi_{\tau^{- {1 \over 2}} }u^K(t_{n})) +  \mathcal{J}_{1}^r(\Pi_{\tau^{- {1 \over 2}} } \overline{u}^K(t_{n}) ,  \Pi_{K^+}e^{n},
\Pi_{\tau^{- {1 \over 2}} }u^K(t_{n})) \right. \right. \\
\left. \left. {}+ \mathcal{J}_{1}^r(\Pi_{\tau^{- {1 \over 2}} } \overline{u}^K(t_{n}) ,  \Pi_{K^+} u^K(t_{n}),
\Pi_{\tau^{- {1 \over 2}} }e^{n})
\right] \right.\\
\left. - i \left[ \mathcal{J}_{2}^r (\Pi_{K}\overline{e}^n, \Pi_{\tau^{- {1 \over 2}}} u^K(t_{n}),   \Pi_{\tau^{- {1 \over 2}}} u^K(t_{n}))   +\mathcal{J}_{2}^r (\Pi_{K} \overline{u}^K(t_{n}), \Pi_{\tau^{- {1 \over 2}}} e^{n},   \Pi_{\tau^{- {1 \over 2}}} u^K(t_{n}))
\right. \right.\\
{}+\left. \left.\mathcal{J}_{2}^r (\Pi_{K} \overline{u}^K(t_{n}), \Pi_{\tau^{- {1 \over 2}}} u_{K}(t_{n}),   \Pi_{\tau^{- {1 \over 2}}} e^{n}) \right]\right),
\end{multline}
\begin{multline}
\label{defQfin}
Q_{n}= { 1 \over \tau} \left(- 2 i \left[    \mathcal{J}_{1}^r(\Pi_{\tau^{- {1 \over 2}} }  \overline{e}^{n},  \Pi_{K^+} e^n,
\Pi_{\tau^{- {1 \over 2}} }u^K(t_{n})) +  \mathcal{J}_{1}^r(\Pi_{\tau^{- {1 \over 2}} }  \overline{e}^{n},  \Pi_{K^+}u^K(t_{n}),
\Pi_{\tau^{- {1 \over 2}} } e^n)  \right. \right. \\
\left. \left. {}+  \mathcal{J}_{1}^r(\Pi_{\tau^{- {1 \over 2}} }  \overline{u}^K(t_{n}) ,  \Pi_{K^+} e^n,
\Pi_{\tau^{- {1 \over 2}} } e^n)
\right] \right.\\
\left. - i \left[ \mathcal{J}_{2}^r (\Pi_{K}\overline{e}^n, \Pi_{\tau^{- {1 \over 2}}} e^n,   \Pi_{\tau^{- {1 \over 2}}} u^K(t_{n}))
+ \mathcal{J}_{2}^r (\Pi_{K}\overline{e}^n, \Pi_{\tau^{- {1 \over 2}}} u^K(t_{n}),   \Pi_{\tau^{- {1 \over 2}}} e^n) \right. \right.\\
\left. \left. {}+ \mathcal{J}_{2}^r (\Pi_{K}\overline{u}^K(t_{n}), \Pi_{\tau^{- {1 \over 2}}} e^n ,   \Pi_{\tau^{- {1 \over 2}}} u^K(t_{n}))\right] \right),
\end{multline}
\allowdisplaybreaks[3]
and
\beq
\label{defTfin}
C_{n}= { 1 \over \tau} \left(- 2 i    \mathcal{J}_{1}^r(\Pi_{\tau^{- {1 \over 2}} }  \overline{e}^{n},  \Pi_{K^+} e^n,
\Pi_{\tau^{- {1 \over 2}} } e^n)  - i \mathcal{J}_{2}^r (\Pi_{K}\overline{e}^n, \Pi_{\tau^{- {1 \over 2}}} e^n,   \Pi_{\tau^{- {1 \over 2}}} e^n) \right).
\eeq
By using Lemma \ref{bourgainfaciled} and \eqref{erreurpart1},  we get from \eqref{erroreq} that
$$
\|e^n \|_{X^{0, b}_{\tau}} \leq   C_T T_{1}^{\varepsilon_{0}} \|G_{n}\|_{X^{0, - { 3 \over 8}}_{\tau}} +  C_{T}\tau^{ s_{0}- { (0)_{+}}},\qquad n\tau\le T_1,
$$
where $\varepsilon_{0}= 5/8 - b >0$. Next, we have that
$$
\|G_{n}\|_{X^{0, - { 3 \over 8}}_{\tau}} \leq   \|L_{n}\|_{X^{0, - { 3 \over 8}}_{\tau}} +  \|Q_{n}\|_{X^{0, - { 3 \over 8}}_{\tau}} + \|C_{n}\|_{X^{0, - { 3 \over 8}}_{\tau}}.
$$
To estimate the right-hand side, we use the equivalent definitions \eqref{relationJ1}, \eqref{J2rel} and again \eqref{prodd3} and \eqref{shiftt} (we recall that for this case we choose $K \tau^{ 1 \over 2}= 1$). This yields
$$
\|e^n \|_{X^{0, b}_{\tau}} \leq    C_{T} T_{1}^{\varepsilon_{0}}  \left(   \|e^n \|_{X^{0, b}_{\tau}}
+  \|e^n \|_{X^{0, b}_{\tau}}^2 +  \|e^n \|_{X^{0, b}_{\tau}}^3 \right) +  C_{T}\tau^{ s_{0}- { ( 0)_{+}}}.
$$
By choosing $T_{1}$ sufficiently small we thus get that
$$
\|e^n \|_{X^{0, b}_{\tau}} \leq  C_{T}\tau^{ s_{0}- { (0)_{+}}}.
$$
This proves the desired estimate \eqref{final1} for $ 0 \leq n \leq N_{1}= T_{1}/ \tau$. We can then iterate the argument on  $ T_{1}/ \tau \leq  n \leq 2 T_{1}/\tau$ and so on to get the final estimate.
We thus finally obtain from \eqref{erreurtotale} that 
$$\|u(t_{n})- u^n_{\tau}\|_{L^2} \leq  C_{T} (\tau^{s_{0}\over 2} + \tau^{s_{0}-(0)_{+}})$$
which means that for every $0 < \varepsilon < s_{0}$, we have for some $C_{T}$ (depending on $\varepsilon$) the estimate
$$\|u(t_{n})- u^n_{\tau}\|_{L^2} \leq  C_{T} (\tau^{s_{0}\over 2} + \tau^{s_{0}-\varepsilon}).$$
Since we can always choose $\varepsilon$ small enough so that $ s_{0}- \varepsilon >s_{0}/2$, we get
\eqref{final1}.

We next prove \eqref{final2}. We follow the same lines, but we can now optimize the total error. From Lemma \ref{lemmaR1n} and Lemma \ref{lemmaR2n}, we get that
$$
\| \mathcal{R}_{1,n} \|_{X_{\tau}^{0,b}} + \| \mathcal{R}_{2, n} \|_{X_{\tau}^{0,b}}
\leq C_{T} \left(   (K \tau^{1 \over 2})^{1 \over 2}   \tau^{ s_{0}- { ( { 1 \over 8})_{+}}} + (K \tau^{1 \over 2})^{1 \over 2} \tau^{5 \over 8}\right)
\leq C_{T} (K \tau^{1 \over 2})^{1 \over 2}   \tau^{ s_{0}- { ( { 1 \over 8})_{+}}}.
$$
We thus choose  $K$ such that $  (K \tau^{1 \over 2})^{1 \over 2}   \tau^{ s_{0}- { ( { 1 \over 8})_{+}}} = { 1 \over K^{s_{0}}}$ which gives
\beq
\label{choicek1}
K= \tau^{-\alpha/2} = \tau^{ - \tfrac{  s_{0} +  ({ 1 \over 8})_{-}}{s_{0}+ {1 \over 2}}},  \quad \alpha  = 2  \tfrac{  s_{0} +  ({ 1 \over 8})_{-}}{s_{0}+ {1 \over 2}} =
2 \left({1 -  \tfrac{1}{2 s_{0}+ 1} \left(\tfrac{3}{4}\right)_{+}}\right).
\eeq
Note that we have $\alpha \in [1, 2]$ since $1/4  < s_{0} \leq 1/2$, and further
\beq
\label{erreurpart2}
\| \mathcal{R}_{1,n} \|_{X_{\tau}^{0,b}} + \| \mathcal{R}_{2, n} \|_{X_{\tau}^{0,b}}
\leq C_{T}\tau^{  s_{0}(  1 -  { 1 \over 2 s_{0}+ 1} ( { 3 \over 4})_{+})}.
\eeq
By using Lemma \ref{bourgainfaciled},   we get from \eqref{erroreq} that
\beq
\label{cestfini1}
\|e^n \|_{X^{0, b}_{\tau}} \leq   C_T T_{1}^{\varepsilon_{0}} (  \|L_{n}\|_{X^{0, - { 3 \over 8}}_{\tau}}
  + \|Q_{n}\|_{X^{0, - { 3 \over 8}}_{\tau}} + \|C_{n}\|_{X^{0, - { 3 \over 8}}_{\tau}})
  +   C_{T}\tau^{  s_{0}(  1 -  { 1 \over 2 s_{0}+ 1} ( { 3 \over 4})_{+})}.
\eeq
To estimate $L_{n}$, we use the product estimates \eqref{prodd3ter}, \eqref{prodd3bis} to get
$$
\|L_{n}\|_{X^{0, - { 3 \over 8}}_{\tau}} \lesssim  \|u^n\|_{X_{\tau}^{ ( { 1 \over 4} )_{+}, {3\over 8}}}^2 \|e^n\|_{l^\infty_{\tau}L^2}
    +  \|e^n \|_{X_{\tau}^{0, {3 \over 8}}}  \|u^n \|_{X_{\tau}^{0, {3 \over 8}}} \|u^n \|_{X_{\tau}^{ ( { 1 \over 4} )_{+}, {3\over 8}}}
$$
and hence by using again Proposition~\ref{propunifdisc} and \eqref{sobbourg}, we obtain that
\beq
\label{Lnfinal}
\|L_{n}\|_{X^{0, - { 3 \over 8}}_{\tau}} \lesssim C_{T} \|e^n\|_{X^{0, b}_{\tau}}.
\eeq
To estimate $C_{n}$, we use  again \eqref{prodd3} and \eqref{shiftt}. This yields
\beq
\label{Cnfinal}
\|C_{n}\|_{X^{0, - { 3 \over 8}}_{\tau}} \leq C_{T} K \tau^{1 \over 2}  \|e^n \|_{X_{\tau}^{0, {3 \over 8}}}^3.
\eeq
To estimate $Q_{n}$, we use \eqref{prodd3ter} and \eqref{pourlafin} and again Proposition~\ref{propunifdisc}. This yields
\beq
\label{Qnfinal}
\| Q_{n}\|_{X^{0, - { 3 \over 8}}_{\tau}} \leq C_{T} (K \tau^{1 \over 2})^{1 \over 2} \|e^n\|_{X^{0,b}_{\tau}}^2
\eeq
since $s_{0}>1/4$. By setting $ Y = \|e^n\|_{X^{0,b}_{\tau}}/\tau^{  s_{0}(  1 -  { 1 \over 2 s_{0}+ 1} ( { 3 \over 4})_{+})}$, we deduce from the above estimates and \eqref{cestfini1} that
$$
Y \leq  C_{T}T_{1}^{\varepsilon_{0}}\left(  Y +  (K \tau^{ 1 \over 2})^{1 \over 2}\tau^{  s_{0}(  1 -  { 1 \over 2 s_{0}+ 1} ( { 3 \over 4})_{+})} Y^2 +  \left((K \tau^{ 1 \over 2})^{1 \over 2}\tau^{  s_{0}(  1 -  { 1 \over 2 s_{0}+ 1} ( { 3 \over 4})_{+})}\right)^2 Y^3 \right) +  C_{T}.
$$
We can then check that with the choice \eqref{choicek1}, for $1/4  <s_{0} \leq 1/2$,   the exponent $\beta$ of
$$
\tau^\beta = (K \tau^{ 1 \over 2})^{1 \over 2}\tau^{  s_{0}(  1 -  { 1 \over 2 s_{0}+ 1} ( { 3 \over 4})_{+})}
$$
is positive. Hence, we can conclude as before to get \eqref{final2}.

It remains to prove \eqref{final3}. From Lemma \ref{lemmaR1n} and Lemma \ref{lemmaR2n}, we now get that
$$
\| \mathcal{R}_{1,n} \|_{X_{\tau}^{0,b}} + \| \mathcal{R}_{2, n} \|_{X_{\tau}^{0,b}} \leq C_{T}   \tau^{ s_{0}-  ( { 1 \over 8})_{+}}.
$$
We thus choose $K$ such that $\tau^{ s_{0}- { ( { 1 \over 8})_{+}}} = { 1 \over K^{s_{0}}}$ in order to optimize the total error. We find
\beq
\label{choiceK3}
K=  \tau^{ { 1 \over s_{0}} ( { 1 \over 8})_{+}  - 1 } , \quad \alpha = 2 -\tfrac{ 1 }{s_{0}} ( \tfrac{ 1 }{4})_{+} .
\eeq
We can use again \eqref{Cnfinal}, \eqref{Qnfinal} and \eqref{Lnfinal} to obtain from \eqref{erroreq}
$$
\|e^n \|_{X^{0, b}_{\tau}} \leq  C_T T_{1}^{\varepsilon_{0}} \left(  \|e^n \|_{X^{0, b}_{\tau}}
+ (K \tau^{1 \over 2})^{1 \over 2} \|e^n \|_{X^{0, b}_{\tau}}^2 + K \tau^{1 \over 2}  \|e^n \|_{X^{0, b}_{\tau}}^3 \right)
+ C_{T}\tau^{ s_{0}-  ( { 1 \over 8})_{+}}.
$$
Again, by setting $Y= \|e^n \|_{X^{0, b}_{\tau}} / \tau^{ s_{0}-  ( { 1 \over 8})_{+}}$, we get that
$$
Y \leq   C_T T_{1}^{\varepsilon_{0}}\left( Y + (K \tau^{1 \over 2})^{1 \over 2}   \tau^{ s_{0}-  ( { 1 \over 8})_{+}} Y^2
+ \left((K \tau^{1 \over 2})^{1 \over 2}   \tau^{ s_{0}-  ( { 1 \over 8})_{+}}\right)^2 Y^3\right) + C_{T}
$$
and we conclude as before, by observing that the exponent of $\tau$ in
$$
(K \tau^{1 \over 2})^{1 \over 2}   \tau^{ s_{0}-  ( { 1 \over 8})_{+}}
$$
is positive with the choice \eqref{choiceK3}.

\section{Proof of Lemma \ref{prodd}} \label{sectiontechnic}

We have to prove \eqref{prodd1}. For this purpose, we adapt the proof in \cite{Tao06}  (which is attributed to N.~Tzvetkov). We first observe that
\begin{equation}\label{trick1}
\|\Pi_{K}u_{n}\|_{l^4_{\tau}L^4}^2= \| (\Pi_{K}u_{n})^2 \|_{l^2_{\tau}L^2}.
\end{equation}
By the definition of the $X^{0,b}_\tau$ norm and by setting  $f_{n}= \e^{-i n\tau \partial_{x}^2} \Pi_{K}u_{n}$,  it is equivalent to prove that
$$
\left\| (\e^{in\tau \partial_{x}^2} f_{n})^2 \right\|_{l^2_{\tau}L^2} \lesssim  K \tau^{1 \over 2} \|f_{n}\|_{H^\frac38_{\tau}L^2}^2.
$$
By using the space-time Fourier transform we shall decompose $\widetilde {f_m}(\sigma, k)$ by using a Littlewood--Paley decomposition with respect to $\sigma$. Note that since $\sigma \in [-\pi/\tau, \pi/\tau]$, there is actually a finite number of terms. We write
$$
f_{n} = \sum_{l\ge 0} f_{n,l}
$$
where $\widetilde {f_{m,l}}(\cdot, k) $ is supported in $ 2^{l-1} \leq  \langle  \sigma  \rangle  \leq 2^{l+1}$ for every $k$. By symmetry and the triangle inequality,  it is  sufficient to prove that
$$
\sum_{ p \leq q}   \| \e^{in\tau \partial_{x}^2} f_{n,p}    \,\e^{in\tau \partial_{x}^2} f_{n,q} \|_{l^2_{\tau}L^2} \lesssim K \tau^{1\over 2}   \|f_{n}\|_{H^{3 \over 8}_{\tau}L^2}^2.
$$
We shall actually prove that there exists $\epsilon >0$ (we shall see that we can take $\epsilon = 1/8$) such that for every $p, q$ with $p \leq q$,
\begin{equation}\label{ap2}
\| \e^{in\tau \partial_{x}^2} f_{n,p}   \,\ e^{in\tau \partial_{x}^2} f_{n,q} \|_{l^2_{\tau}L^2}  \lesssim  K \tau^{1 \over 2}\,2^{\epsilon(p-q)} (2^{{3 \over 8} p} \|f_{n,p}\|_{l^2_{\tau}L^2}) (2^{{3 \over 8} q} \|f_{n,q}\|_{l^2_{\tau}L^2} ).
\end{equation}
Once this inequality is proven, the result follows easily. Indeed, let us set $a_{m}=   2^{{3 \over 8} m} \|f_{n,m}\|_{l^2_{\tau}L^2}$,  $b_{m} = 2^{\epsilon m} \mathrm{1}_{m \leq 0}.$ By Parseval, we have that $|a_{m}| \lesssim   \|f_{n,m}\|_{H^{3 \over 8}_{\tau}L^2}$ and that
$$
\| a_{m}\|_{l^2} \lesssim  \|f_{n}\|_{H^{3 \over 8}_{\tau}L^2}.
$$
Moreover, assuming that \eqref{ap2} is proven we obtain that
$$
\sum_{ p \leq q}   \| \e^{i n\tau \partial_{x}^2} f_{n,p}   \,\e^{i n\tau \partial_{x}^2} f_{n,q} \|_{l^2_{\tau}L^2} \lesssim K \tau^{1 \over 2}  \|( b*a)_{m}  a_{m}) \|_{l^1} \lesssim K \tau^{1 \over 2} \| a_m\|_{l^2}^2
$$
from Cauchy--Schwarz and Young's inequality for sequences (observe that $b \in l^1$) which is the desired estimate.

We shall now prove  \eqref{ap2}. From Parseval and by using \eqref{transl}, we have that
$$
\| \e^{i n\tau \partial_{x}^2} f_{n,p}    \,\e^{i n\tau \partial_{x}^2} f_{n,q} \|_{l^2_{\tau}L^2}^2  =  \sum_{k} \int_{\pi\over \tau}^{\pi \over \tau} \left|  \sum_{k_{1}+k_{2}= k} \int_{ \sigma_{1}+ \sigma_{2}= \sigma}  \widetilde{f_{n, p}}(\sigma_{1}-k_{1}^2, k_{1}) \widetilde{f_{n, q}}(\sigma_{2}-k_{2}^2, k_{2}) \dd \sigma_{1} \right|^2 \dd \sigma.
$$
Now let us notice that we have a nontrivial contribution if $\sigma_{1} - k_{1}^2$ is in the support if  $ \widetilde{f_{n,p}}(\cdot, k_{1})$ and $ \sigma_{2} - k_{2}^2$  in the one of $\widetilde{f_{n,q}}(\cdot, k_{2})$. By periodicity in the $\sigma $  variable, this means that there exist $m_{1}$, $m_{2}$ in $\mathbb{Z}$ such that
$$
\left| \sigma _{1} - k_{1}^2 -  {2m_{1} \pi \over \tau} \right| \lesssim 2^p, \quad  \left| \sigma _{2} - k_{2}^2 -  {2m_{2} \pi \over \tau} \right| \lesssim  2^q.
$$
In other words, we have that $\sigma_{1} -  k_{1}^2 \in  E_{p}$, $\sigma_{2} - k_{2}^2 \in  E_{q}$ where $E_{l}= \cup_{ |m| \leq N} [ {2 m  \pi \over \tau} - 2^l,  {2 m \pi \over \tau} + 2^l]$. Note that since the frequencies $k_{1}^2, k_{2}^2$ are smaller than $K^2$, we can take $N \lesssim \tau K^2.$

By using again  Cauchy--Schwarz, we thus  get that
\begin{equation} \label{ap3}
\| \e^{i n\tau \partial_{x}^2} f_{n,p}    \,\e^{i n\tau \partial_{x}^2} f_{n,q} \|_{l^2_{\tau}L^2}^2 \lesssim M_{p,q}  \| \widetilde{ f_{n,p}}\|_{L^2{l^2}}^2   \| \widetilde{f_{n,q}}\|_{L^2{l^2}}^2,
\end{equation}
where
$$
M_{p,q}= \sup_{k, \sigma} \sum_{k_{1}+ k_{2}= k} \int_{ \sigma_{1} + \sigma_{2}= \sigma, \, \sigma_{1}- k_{1}^2 \in E_{p}, \sigma_{2}- k_{2}^2 \in E_{q} } \dd \sigma_{1}.
$$
To estimate $M_{p,q}$, we observe that only $\sigma  \in k_{1}^2+ k_{2}^2 + E_{p}+ E_{q} \subset  k_{1}^2+ k_{2}^2  + 2E_{q}$ gives a nonzero contribution and that the integral is bounded by a constant times $2^p$. Since $k_{1}+ k_{2}=k$, we have
$$ 
k_{1}^2 + k_{2}^2= {1 \over 2} \left( k^2 + (k_{1} - k_{2})^2\right)
$$
and hence
$$
(k_{1} - k_{2})^2 \in 2 \sigma - k^2 - 4 E_{q}.
$$
Therefore,  $k_{1}- k_{2}$ is constrained in intervals of length $\lesssim  2^{q \over 2}$ and there are at most $2 \tau K^2$ intervals. As a consequence, we obtain that
$$
M_{p,q} \lesssim \tau K^2  2^{q \over 2} 2^p= \tau K^2 2^{ p-q \over 4 } 2^{ 3 p \over 4} 2^{3 q \over 4}.
$$
Taking the square root, we thus deduce \eqref{ap2} from \eqref{ap3}. This concludes the proof of \eqref{prodd1}.

\section{Numerical experiment}\label{sec:numerical-ex}

In this section we illustrate our main result (Theorem \ref{maintheo}) on the $L^2$ error estimate by a numerical experiment. For this purpose, we solve the periodic Schr\"{o}dinger equation \eqref{nlsO} with initial value
\begin{align*}
u(0) = f_{H^1} + \frac{2\sin x}{2-\cos x}
\end{align*}
on the torus. Here $f_{H^1}$ is a randomized $H^1$ function normalised in $L^2$ (see \cite{KOS19} for details on the construction of $f_{H^1}$). We compare our new integrator \eqref{scheme} with the previously introduced single-filtered Fourier based method \cite{OS18,ORS19} and two standard integration schemes for periodic Schr\"{o}dinger equations: a Lie splitting and  exponential integrator method (see, e.g., \cite{CoGa12,Lubich08}). For the latter, we employ a standard Fourier pseudospectral method for the discretization in space and we choose as largest Fourier mode $K = 2^{10}$ (i.e., the spatial mesh size $\Delta x =  0.0061$). On the other hand,  for our twice-filtered Fourier based integrator, we have to use the relation $K = \tau^{-\alpha/2}$ with $\alpha$ given in \eqref{choiceK3}, as we are in case (iii) of Theorem \ref{maintheo} (recall that $s_0 = 1$). This results in $K \approx \tau^{-5/6}$.

We observe from the experiment that the new twice-filtered Fourier integrator is convergent of order one for rough solutions in $H^1$ whereas the standard discretization techniques as well as our previously introduced single-filtered Fourier based method all suffer from order reduction, see Figure \ref{fig}. In particular, the numerically obtained order for the exponential integrator is reduced down to 0.3, whereas the results for the standard Lie splitting scheme are highly irregular. Both integrators are thus unreliable and inefficient for such low regularity initial data. The single-filtered Fourier based integrator shows a more regular error behaviour for the considered example, however, its order is reduced to 3/4. The only method that is able to integrate the considered low regularity problem appropriately is the twice-filtered Fourier based scheme \eqref{scheme} proposed in this paper. \red For  smooth solutions the new twice-filtered Fourier integrator preforms similar as the Lie splitting and the exponential integrator.  More precisely, for initial values at least in $H^2$ all three schemes converge with first-order accuracy  $\tau$; see, e.g., \cite{ESS16} for the analysis of the Lie splitting method  for $H^2$ data.\black

\begin{figure}[h]
\centering
\includegraphics[width=0.471\linewidth]{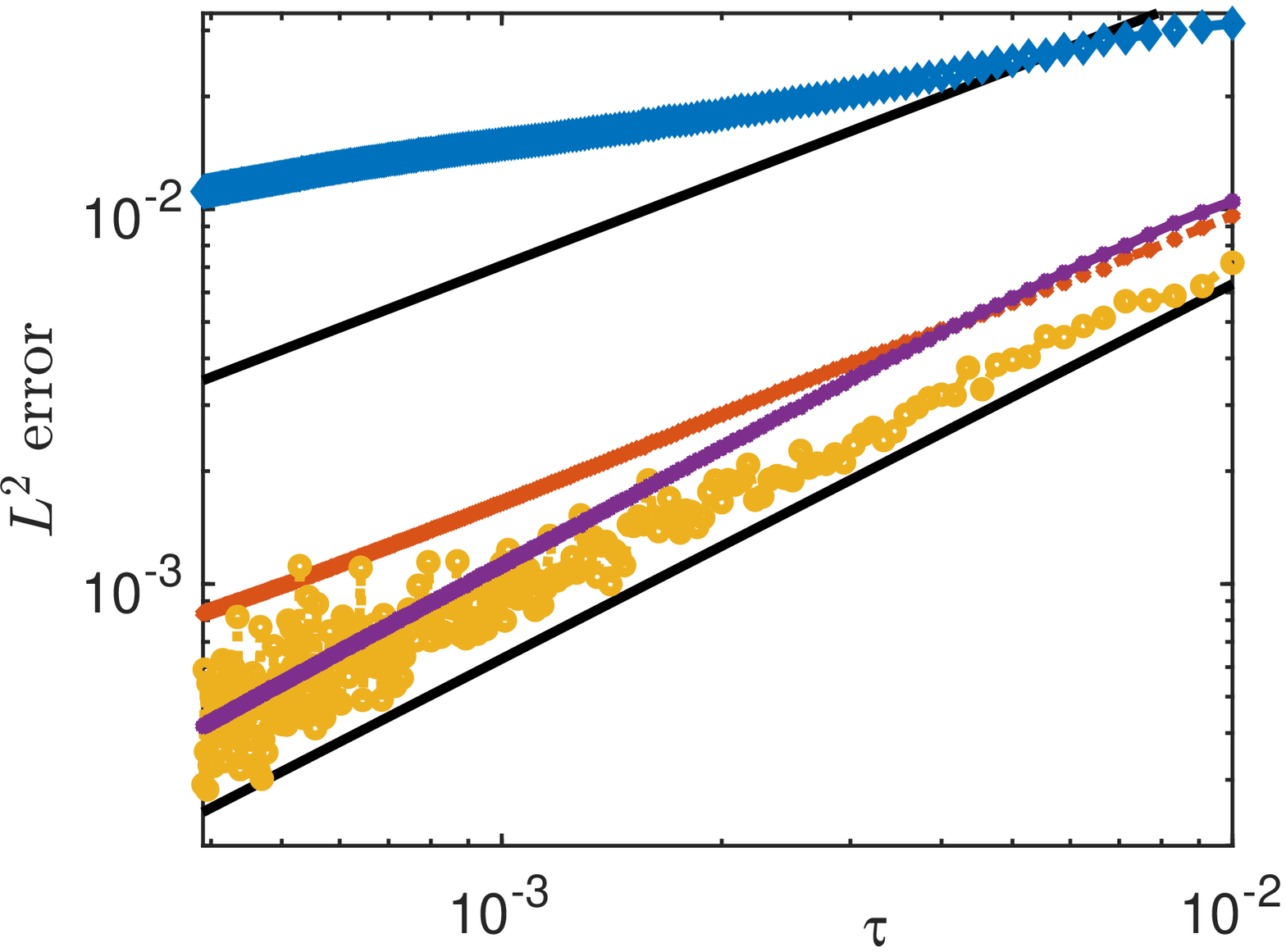}
\hfill
\includegraphics[width=0.48\linewidth]{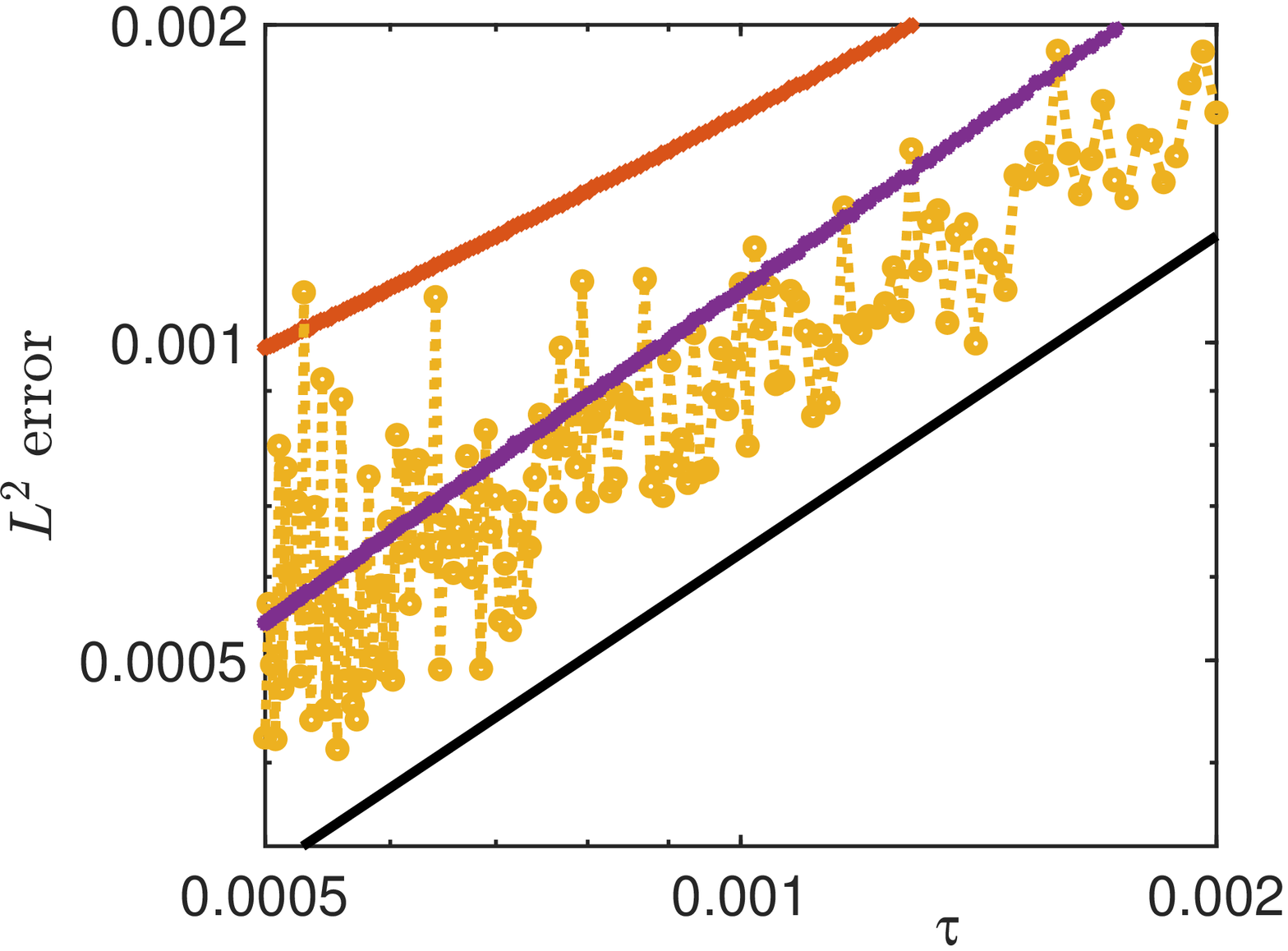}
\caption{$L^2$ error of the new twice-filtered Fourier based scheme \eqref{scheme} (purple), the  Lie splitting scheme (yellow), the exponential integrator (blue), and the original single-filtered Fourier based scheme (red) proposed in \cite{OS18,ORS19}. Left picture: the slope of the (black) reference lines is 1 and $\frac34$, respectively. Right picture: zoom into the region of the left lower corner; the slope of the (black) reference line is 1.}\label{fig}
\end{figure}

\subsection*{Acknowledgements}

{\small
KS has received funding from the European Research Council (ERC) under the European Union’s Horizon 2020 research and innovation programme (grant agreement No. 850941).
}

\end{document}